\documentclass[a4paper,10pt]{article}
\usepackage{LatexDefinitions}
\usepackage{color, booktabs, centernot}
\usepackage{xspace}
\usepackage{graphicx, subcaption}

\usepackage{placeins}

\floatstyle{ruled}
\newfloat{algorithmfloat}{t}{lop}
\floatname{algorithmfloat}{Algorithm}

\newcommand{\proj}{\tn{P}}
\newcommand{\setm}{\mathfrak{C}}
\usepackage{esint}

\renewcommand{\c}{\textup{C}}
\newcommand{\Tor}{\mathbb{T}}

\DeclareMathOperator{\HK}{HK}
\DeclareMathOperator{\Hell}{Hell}
\DeclareMathOperator{\Cos}{Cos}
\DeclareMathOperator{\Sin}{Sin}
\DeclareMathOperator{\dist}{dst}

\newcommand{\weakstar}{\rightharpoonup^*}

\newcommand{\Lebesgue}{\mathcal{L}}

\newcommand{\primalr}{$(\mathcal{P}_{\rho, \kappa})$\xspace}
\newcommand{\dualr}{$(\mathcal{D}_{\rho,\kappa})$\xspace}

\title{Hellinger--Kantorovich barycenter between Dirac measures}
\author{Mauro Bonafini, Olga Minevich, Bernhard Schmitzer}
\date{\today}

\begin{document}
	
	\maketitle
	
	\begin{abstract}
		The Hellinger--Kantorovich (HK) distance is an unbalanced extension of the Wasserstein-2 distance. It was shown recently that the HK barycenter exhibits a much more complex behaviour than the Wasserstein barycenter. Motivated by this observation we study the HK barycenter in more detail for the case where the input measures are an uncountable collection of Dirac measures, in particular the dependency on the length scale parameter of HK, the question whether the HK barycenter is discrete or continuous and the relation between the expected and the empirical barycenter. The analytical results are complemented with numerical experiments that demonstrate that the HK barycenter can provide a coarse-to-fine representation of an input pointcloud or measure.
	\end{abstract}
	
	\section{Introduction}
	\subsection{Motivation}
	\paragraph{Barycenters for Wasserstein and Hellinger--Kantorovich distances}
	Today Wasserstein distances are an ubiquitous tool in mathematical modelling and data analysis, due to their intuitive interpretation, robustness, and rich geometric structure, see \cite{Villani-OptimalTransport-09,Santambrogio-OTAM,PeyreCuturiCompOT} for related monographs on the matter. For instance, the seminal article by Agueh and Carlier \cite{AguehCarlier11} introduces the Wasserstein-2 barycenter as a natural way to average over samples.
	Let $\mu_1,\ldots,\mu_n \in \prob(\Omega)$ be probability measures on some (compact, convex) subset $\Omega \subset \R^d$ and let $\lambda_1,\ldots,\lambda_n$ be positive weights that sum to one. Then the corresponding barycenter problem is to find a minimizer of
	\begin{equation}
		\label{eq:IntroWBarycenter}
		\prob(\Omega) \ni \nu \mapsto \sum_{i=1}^n \lambda_i\,W^2(\mu_i,\nu),
	\end{equation}
	where $W^2$ denotes the squared Wasserstein-2 distance.
	More generally, let $\setm \subset \prob(\Omega)$ be a suitable set of probability measures and let $\Lambda \in \prob(\setm)$ be a probability measure thereon. Then one can generalize problem \eqref{eq:IntroWBarycenter} to
	\begin{equation}
		\label{eq:IntroWBarycenterCont}
		\nu \mapsto \int_{\setm} W^2(\mu,\nu)\,\diff \Lambda(\mu),
	\end{equation}
	i.e.~one might compute the barycenter between a potentially uncountable set of input measures. Such functionals were studied, for example, in \cite{pass2013optimal,BigotKleinBarycenter2018}.
	The discrete case is recovered by setting $\Lambda=\sum_{i=1}^n \lambda_i \cdot \delta_{\mu_i}$.
	
	More recently unbalanced optimal transport distances have received increased attention, motivated (among other things) by better resilience to spurious mass fluctuations. A particularly prominent example in this family is the Hellinger--Kantorovich (HK) distance \cite{KMV-OTFisherRao-2015,ChizatOTFR2015,liero2018optimal}, which exhibits a weak Riemannian structure akin to the Wasserstein-2 distance. The definition of the HK metric involves a length scale parameter $\kappa>0$ that controls the trade-off between mass change and transport. For $\kappa \to \infty$ one recovers the Wasserstein-2 metric, for $\kappa \to 0$ the Hellinger metric.
	The corresponding HK barycenter has been studied in \cite{Chung2021,friesecke2019barycenters}.
	
	\paragraph{HK barycenter between Dirac measures.}
	To obtain better insight into the effect of the length scale parameter, \cite{friesecke2019barycenters} specifically also studied the case where the input measures are all Dirac measures. While this problem would be trivial in the Wasserstein case, a non-trivial structure depending on the length scale parameter $\kappa$ was observed on some simple analytical and preliminary numerical examples.
	The behaviour seemed reminiscent of hierarchical clustering methods where the number of clusters is chosen automatically, depending on $\kappa$.
	Between transitions of different cluster numbers sometimes a diffuse intermediate solution was observed. In these cases, the solution was shown to be non-unique and a discrete solution was always shown to exist as well.
	It is intriguing that a convex functional exhibits such phenomena that are usually associated with non-convex functionals for clustering or measure quantization, see \cite{BourneRoper15} for instance.
	
	Therefore, the goal of this article is to study this barycenter functional and the properties of its minimizers in more detail, with a focus on the case of Dirac marginals.
	Let $\rho \in \measp(\Omega)$ be a probability measure on $\Omega$ that describes the distribution of the input Dirac measures (i.e.~$\rho$ plays the role of $\Lambda$ in \eqref{eq:IntroWBarycenterCont}). Then we study in particular the functional
	\begin{equation}
		\label{eq:IntroHKBarycenterDirac}
		\nu \mapsto \int_\Omega \HK^2_\kappa(\delta_x,\nu)\,\diff \rho(x),
	\end{equation}
	where $\HK^2_\kappa$ denotes the squared HK distance with length scale parameter $\kappa$.
	
	We emphasize again that for the Wasserstein-2 distance this functional is trivially minimized by $\nu=\delta_{\bar{x}}$ with $\bar{x}=\int_\Omega x\,\diff \rho(x)$ being the center of mass of $\rho$ (for simplicity assuming that $\Omega$ is convex). For the HK distance, the functional \eqref{eq:IntroHKBarycenterDirac} exhibits highly non-trivial minimizers, which might be interpretable as clusterings or quantizations of $\rho$ at length scale $\kappa$.
	To this end it will be important to study, whether minimizers of \eqref{eq:IntroHKBarycenterDirac} tend to be sparse; how they evolve under a change of the scale parameter $\kappa$; whether they are robust with respect to variations in $\rho$, e.g.~to show that one obtains a consistent result when an unknown `true' measure $\rho$ is successively approximated by a sequence of empirical measures $\rho_n$ built from sampling; and how to approximately minimize the functional numerically.
	
	\subsection{Outline and contribution}
	Notation and the mathematical setting are fixed in Section \ref{sec:Notation}, and Section \ref{sec:Background} recalls some background on the Wasserstein and Hellinger--Kantorovich distances.
	
	Throughout Section \ref{sec:HKBarycenter} we study the HK barycenter problem between a continuum of general (non-Dirac) input measures. We provide existence and stability under changes in $\Lambda$, see \eqref{eq:IntroWBarycenterCont}, and $\kappa$, including the limits $\kappa \to 0$ or $\infty$. A dual problem is derived that will become instrumental in the analysis of the Dirac case. To our knowledge, this is the first statement of a transport barycenter dual problem for a continuum of input measures.
	
	Section \ref{sec:Diracs} is dedicated to the Dirac case, \eqref{eq:IntroHKBarycenterDirac}. We give simplified expressions for the primal and dual objectives and show existence and uniqueness of dual solutions, primal-dual optimality conditions, and dual stability with respect to $\rho$ and $\kappa$.
	The solution for the $\kappa=0$ limit is given explicitly and the asymptotic behaviour as $\kappa \to 0$ is described in terms of total mass and local mass density of the minimizer.
	Finally, we turn to the question of sparsity of the minimizers. We give an alternative proof to that of \cite{friesecke2019barycenters} that discrete minimizers exist when $\rho$ consists of a finite number of Diracs. But conversely, we give analytical examples for which no discrete minimizers exist.
	
	Section \ref{sec:Numerics} discusses numerical approximation and examples.
	
	We propose a non-convex Lagrangian discretization, reminiscent of methods for quantization problems. It provides high spatial accuracy in the case of sparse solutions. Unlike the quantization problem, missing points can be detected by sampling the dual potential.
	We illustrate that the evolution of the barycenter is stable with respect to $\kappa$, but far from a simple successive merging of clusters. Instead, a wide variety of transition behaviors is documented.
	The convergence of the barycenter as the input data $\rho_n$ converges to $\rho$ is visualized.
	We observe that for some values of $\kappa$ the HK barycenter seems to be unique and can be approximated well numerically, whereas for other values (usually the `transition regimes') this proves to be quite challenging since either it is non-unique or the basin around the minimizer is extremely shallow, as evidenced by very degenerate primal-dual slackness conditions. Non-uniqueness of minimizers and a vast set of near-optimizers are common phenomena in non-convex measure quantization, see e.g.~\cite[Section 4.1]{BourneRoper15}.

	In conclusion, the HK barycenter between Dirac measures does not provide a novel straightforward method for hierarchical point clustering, since the evolution of the minimizer with respect to the length scale parameter does not correspond to a simple successive merging of clusters, and sometimes even only diffuse solutions exist.
	But it does provide an interpolation between the input data and a single Dirac measure, parametrized by a single length scale parameter, that can be interpreted as gradual coarse graining. It is provably stable with respect to the input data and scale changes and comes with a corresponding sequence of dual problems with unique solutions, that provide additional interpretation via the primal-dual optimality relations and information for numerical approximation.
	A summarizing discussion is given in Section \ref{sec:Conclusion}.
	
	\subsection{Notation and setting}
	\label{sec:Notation}
	Throughout this article we adopt the following conventions and notations:
	\begin{itemize}
		\item Let $\Omega \subset \R^d$ be a compact and convex set with non-empty interior.
		\item For a compact metric space $(Y, d_Y)$ we denote by $\c(Y)$ the space of continuous real valued functions equipped with the sup-norm.
		$\meas(Y)$ denotes the space of Radon measures equipped with the total variation norm, the subsets of non-negative and probability measures are denoted by $\measp(Y)$ and $\prob(Y)$ respectively. We consider on $\meas(Y)$ the weak* topology induced via duality with $\c(Y)$. For $\mu \in \measp(Y)$ one has $\|\mu\| = \mu(Y)$.
		\item For a function $f \colon Y \rightarrow\mathbb{R}\cup \{\infty\}$, we denote by $f^*$ its \emph{Fenchel--Legendre conjugate} defined on the dual space $Y^*$ as
		\[
		f^*(y^*) = \sup_{y \in Y} \{ \langle y^*, y \rangle - f(y) \} \quad \text{for } y^* \in Y^*.
		\]	
		\item A measure $\mu\in\meas(Y)$ is \emph{absolutely continuous with respect to} a measure $\nu\in\measp(Y)$, denoted $\mu \ll \nu$, if for every measurable subset $A\subset Y$, $\nu(A) = 0$ implies $\mu(A) = 0$. For $\mu \ll \nu$, we denote by $\diff\mu/\diff\nu$ the \emph{Radon--Nikodym derivative} of $\mu$ w.r.t.~$\nu$.
		\item Given two compact metric spaces $X,Y$, a measurable function $f \colon X \to Y$ and a measure $\mu \in \meas(X)$, we define the \emph{push-forward} of $\mu$ through $f$ as the measure $f_\#\mu \in \meas(Y)$ characterized by
		\[
		\int_Y \phi(y)\,\diff (f_\#\mu)(y) = \int_X \phi(f(x))\,\diff \mu(x) \quad \tn{for all }\phi \in \c(Y).
		\]
		
		\item For $\mu \in \meas(Y)$, $\nu \in \measp(Y)$ the \emph{Kullback--Leibler divergence} (or relative entropy) of $\mu$ w.r.t.~$\nu$ is given by
		\begin{align*}
			\KL(\mu|\nu) & \assign \begin{cases}
				\displaystyle \int_Y \varphi\left(\RadNik{\mu}{\nu}\right)\,\diff \nu & \tn{if } \mu \ll \nu,\,\mu \geq 0, \\
				+ \infty & \tn{else,}
			\end{cases} \\
			\intertext{where $\varphi \colon \R \to \RCupInf$ is defined by}
			\varphi(s) & \assign \begin{cases}
				s\,\log(s)-s+1 & \tn{if } s>0, \\
				1 & \tn{if } s=0, \\
				+ \infty & \tn{else.}
			\end{cases} \\
		\end{align*}
		
	\end{itemize}
	
	\section{Wasserstein and Hellinger--Kantorovich metrics}
	\label{sec:Background}
	\subsection{Wasserstein distance}
	Let $(Y,d_Y)$ be a compact metric space. For $\mu, \nu \in \measp(Y)$, we define the \emph{Wasserstein distance} $W$ between $\mu$ and $\nu$ as
	\begin{align}
		W^2(\mu,\nu) & \assign \inf \left\{ \int_{Y \times Y} d_Y^2(x,y)\,\diff \gamma(x,y) \middle| \gamma \in \Gamma(\mu,\nu) \right\},
	\end{align}
	where
	\begin{align}
		\Gamma(\mu,\nu) & \assign \left\{ \gamma \in \measp(Y \times Y) \middle| \proj_1 \gamma=\mu,\,\proj_2\gamma=\nu \right\}.
	\end{align}
	Here $\proj_1$ and $\proj_2$ denote the operators that map measures on $\meas(Y \times Y)$ to their first and second marginal respectively, i.e. $P_1\gamma = [(x,y)\mapsto x]_\#\gamma$ and $P_2\gamma = [(x,y)\mapsto y]_\#\gamma$.
	Note that $\Gamma(\mu,\nu) \neq \emptyset$ if and only if $\mu(Y)=\nu(Y)$. Therefore, $W(\mu,\nu)$ is finite if and only if $\|\mu\|=\|\nu\|$ and by convention $W(\mu,\nu)=+\infty$ otherwise. When restricted to $\prob(\Omega)$, $W$ gives the well-known Wasserstein-2 distance.
	
	A dual formulation is given by
	\begin{multline}
		W^2(\mu, \nu) = \sup \left\{ \int_Y \psi(x)\,\diff \mu(x) + \int_Y \phi(y)\,\diff\nu(y) \right| \\
		\left. \vphantom{\int_{Y}} \psi, \phi \in \c(Y), \psi(x) + \phi(y) \leq d_Y^2(x,y) \text{ for all } x,y \in Y \right\}.
	\end{multline}
	Again, note that for $\|\mu\| \neq \|\nu\|$ the supremum is $+\infty$, consistent with our convention for the primal formulation. We recall here the basic properties of this distance.
	
	\begin{theorem}[Basic properties of $W$ \protect{\cite[Theorems 6.9, 6.18]{Villani-OptimalTransport-09}}]
		\label{thm:W2Basic}
		Let $(Y,d_Y)$ be a compact metric space. The Wasserstein distance $W$ metrizes the weak* topology over $\prob(Y)$. The metric space $(\prob(Y), W)$ is separable and complete.
	\end{theorem}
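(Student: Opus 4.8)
Since this is the classical statement of \cite[Theorems 6.9, 6.18]{Villani-OptimalTransport-09}, I would recall the structure of the standard argument rather than reprove every detail. First I would verify the metric axioms on $\prob(Y)$. Finiteness is immediate because $d_Y$ is bounded on the compact space $Y$, so $W^2(\mu,\nu)\le\operatorname{diam}(Y)^2<\infty$; symmetry and nonnegativity are clear. For the triangle inequality I would use the gluing lemma: given $\mu_1,\mu_2,\mu_3\in\prob(Y)$, pick optimal plans $\gamma_{12}\in\Gamma(\mu_1,\mu_2)$ and $\gamma_{23}\in\Gamma(\mu_2,\mu_3)$ (these exist since $\Gamma(\cdot,\cdot)$ is weak*-compact and $\gamma\mapsto\int d_Y^2\,\diff\gamma$ is weak*-lower-semicontinuous), glue them along the shared marginal $\mu_2$ into some $\sigma\in\prob(Y\times Y\times Y)$, set $\gamma_{13}:=[(x_1,x_2,x_3)\mapsto(x_1,x_3)]_\#\sigma\in\Gamma(\mu_1,\mu_3)$, and apply the Minkowski inequality in $L^2(\sigma)$ to $d_Y(x_1,x_3)\le d_Y(x_1,x_2)+d_Y(x_2,x_3)$. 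For the identity of indiscernibles, $W(\mu,\mu)=0$ via the diagonal plan $(\mathrm{id},\mathrm{id})_\#\mu$; conversely, if $W(\mu,\nu)=0$ then a weak*-limit of a minimizing sequence in $\Gamma(\mu,\nu)$ is concentrated on the diagonal of $Y\times Y$, forcing $\mu=\nu$.

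For the metrization claim I would sandwich $W=W_2$ between the Kantorovich--Rubinstein distance $W_1$ and a power thereof: Cauchy--Schwarz applied to an optimal $W_2$-plan gives $W_1\le W_2$, while the estimate $\int d_Y^2\,\diff\gamma\le\operatorname{diam}(Y)\int d_Y\,\diff\gamma$ on an optimal $W_1$-plan $\gamma$ gives $W_2^2\le\operatorname{diam}(Y)\,W_1$. It therefore suffices to show that $W_1$ metrizes the weak* topology. The implication $W_1(\mu_n,\mu)\to 0\Rightarrow\mu_n\weakstar\mu$ follows from the dual formula $W_1(\mu,\nu)=\sup\{\int f\,\diff(\mu-\nu):f\ 1\text{-Lipschitz}\}$ together with the uniform density of Lipschitz functions in $\c(Y)$. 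For the converse I would use that, by Arzel\`a--Ascoli, the $1$-Lipschitz functions on $Y$ vanishing at a fixed basepoint form a norm-compact set $K\subset\c(Y)$, and that a weak*-null sequence in $\meas(Y)$ (which is norm-bounded by the uniform boundedness principle) converges to $0$ uniformly on norm-compact sets, whence $W_1(\mu_n,\mu)=\sup_{f\in K}\langle f,\mu_n-\mu\rangle\to 0$.

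For separability I would approximate an arbitrary $\mu\in\prob(Y)$: since $Y$ is compact metric it has a countable dense subset $D=\{y_j\}$; for $\varepsilon>0$, cover $Y$ by finitely many balls $B(y_{j_i},\varepsilon)$, disjointify them into a measurable partition $\{A_i\}_{i=1}^m$ with $A_i\subset B(y_{j_i},\varepsilon)$, and set $\nu:=\sum_{i=1}^m\mu(A_i)\,\delta_{y_{j_i}}$; transporting the mass of $\mu$ inside each $A_i$ to $y_{j_i}$ gives a plan witnessing $W(\mu,\nu)\le\varepsilon$. Rounding the weights $\mu(A_i)$ to rationals and renormalising yields a countable $W$-dense family of atomic measures. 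For completeness, $\prob(Y)$ is weak*-sequentially compact (compactness of $Y$ makes $\c(Y)$ separable, so the weak* topology is metrizable on the weak*-closed bounded set $\prob(Y)$, and Banach--Alaoglu applies), so a $W$-Cauchy sequence admits a weak*-convergent subsequence with limit $\mu\in\prob(Y)$; by the metrization result this subsequence $W$-converges to $\mu$, and then so does the whole Cauchy sequence.

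The main technical point is the direction weak* $\Rightarrow$ $W$-convergence in the metrization: lower semicontinuity of the transport cost only controls $W$ from below along weak*-limits, so a naive passage to the limit in optimal plans does not conclude, which is exactly why the argument is routed through the $W_1$--$W_2$ sandwich and the norm-compactness of the Lipschitz unit ball (alternatively, one could build an explicit coupling via Skorokhod's representation theorem). The remaining ingredients — the gluing lemma, existence of optimal plans, weak*-compactness of $\Gamma(\mu,\nu)$ — are standard.
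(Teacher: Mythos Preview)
Your proof sketch is correct and follows the standard textbook argument. Note, however, that the paper does not actually prove this theorem: it is stated as a known result with a citation to \cite[Theorems 6.9, 6.18]{Villani-OptimalTransport-09}, and no proof is given in the paper itself. Your outline is essentially the argument one finds in Villani, so there is nothing to compare beyond the fact that you have supplied details where the paper simply defers to the reference.
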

	
	\subsection{Hellinger--Kantorovich distance}
	While Wasserstein distances only allow a meaningful comparison between measures of equal mass, the Hellinger--Kantorovich distance is a (geodesic) metric on the set of all non-negative measures. We briefly recall the properties required in this article. An in-depth study is provided in \cite{liero2018optimal}, a compact summary of several of its equivalent formulations, its geodesics and a comparison with the Wasserstein distance can be found in \cite{LinHK2021}.
	
	For $\mu, \nu \in \measp(\Omega)$ the scaled \emph{Hellinger--Kantorovich distance} $\HK_\kappa$ is given by \cite{liero2018optimal}
	\begin{align}
		\label{eq:HKSoftMarginal}
		\HK_\kappa^2(\mu,\nu) & \assign
		\inf \left\{ \int_{\Omega^2} \hat{c}_\kappa(x,y)\,\diff \gamma(x,y) + \KL(\proj_1 \gamma|\mu) + \KL(\proj_2 \gamma|\nu)
		\middle| \gamma \in \measp(\Omega^2) \right\}, \\
		\intertext{where}
		\hat{c}_\kappa(x,y) & \assign \begin{cases} -2 \log \cos(|x-y|/\kappa) & \tn{for } |x-y| \leq \kappa\pi/2 ,\\
			+ \infty & \tn{otherwise.} \end{cases}
	\end{align}
	This is an optimal transport problem where the marginal constraints are relaxed and deviations from the marginals $\mu$ and $\nu$ are admissible and penalized by the Kullback--Leibler divergence, allowing for changes of mass. The parameter $\kappa>0$ is a length scale parameter that balances the trade-off between transport and mass change. From the definition of $\hat{c}_\kappa$ we infer that mass is never transported further than $\kappa\pi/2$, in particular $\kappa$ effectively re-scales the Euclidean distance on $\Omega$, as elaborated in the following Remark.
	\begin{remark}
		\label{rem:HKKappaRescaling}
		For $\kappa \in (0,\infty)$ let $S: \Omega \to \Omega/\kappa$, $x \mapsto x/\kappa$. Then for $\mu, \nu \in \measp(\Omega)$ one obtains that $\HK_\kappa^2(\mu,\nu)=\HK_1^2(S_\# \mu,S_\# \nu)$ where the latter distance is computed on $\measp(\Omega/\kappa)$. This follows quickly from the fact that $S$ is a homeomorphism between $\Omega$ and $\Omega/\kappa$, implying for instance $\KL(\proj_1 \gamma|\mu)=\KL(\proj_1 S_\#\gamma|S_\# \mu)$.
	\end{remark}
	
	We also recall an alternative formulation for $\HK_\kappa$, as given by \cite{ChizatDynamicStatic2018}.
	Let $\Cos \colon \R \to \R$ denote the truncated cosine function defined as
	\begin{align*}
		\Cos(s) \assign \cos\left(\min\{|s|, \; \pi/2\}\right) \quad \tn{for } s \in \R
	\end{align*}
	and consider the following cost function $c_\kappa \colon \Omega \times \R \times \Omega \times \R \to \RCupInf$,
	\[
	c_\kappa(x_1, m_1, x_2, m_2) \assign
	\begin{cases}
		m_1 + m_2 - 2\sqrt{m_1m_2} \Cos(|x_1-x_2|/\kappa) & \tn{if } m_1, m_2 \geq 0, \\
		+ \infty & \tn{otherwise.} \end{cases} \\
	\]
	Then, the scaled Hellinger--Kantorovich distance $\HK_\kappa$ can be written as
	\begin{multline}
		\label{eq:HKSemiCoupling}
		\HK_\kappa^2(\mu,\nu) =
		\inf \Bigg\{ \int_{\Omega^2} c_\kappa\left(x,\tfrac{\diff \gamma_1}{\diff \gamma}(x,y),y,\tfrac{\diff \gamma_2}{\diff \gamma}(x,y)\right)\,\diff \gamma(x,y)	\Bigg| 
		\\ \gamma_1, \gamma_2, \gamma \in \measp(\Omega^2),\, \gamma_1, \gamma_2 \ll \gamma
		\tn{ and } \proj_1\gamma_1 = \mu, \proj_2\gamma_2=\nu\Bigg\}.
	\end{multline}
	Note that $\gamma$ in \eqref{eq:HKSemiCoupling} is just an auxiliary variable and the integral does not depend on the choice of $\gamma$ by positive 1-homogeneity of $c_\kappa$ in its second and fourth argument.
	
	A dual formulation for \eqref{eq:HKSoftMarginal} and \eqref{eq:HKSemiCoupling} is given by \cite{ChizatDynamicStatic2018}
	\begin{align}
		\label{eq:HKDual}
		\HK_\kappa^2(\mu, \nu) & = \sup_{(\psi, \phi) \in Q_\kappa} \left[ \int_\Omega \psi(x)\,\diff \mu(x) + \int_\Omega \phi(y)\,\diff\nu(y) \right],
	\end{align}
	where the set $Q_\kappa$ is defined by
	\begin{align}
		Q_\kappa \assign \left\{(\psi,\phi) \in \c(\Omega) \times \c(\Omega) \,\text{ s.t. }\,
		\begin{aligned}
			&\psi(x), \phi(y) \in (-\infty,1], \\ &(1-\psi(x))(1-\phi(y))\geq \Cos^2(|x-y|/\kappa)
		\end{aligned}
		\quad \text{for all } x,y \in \Omega\right\}
		\label{eq:Qkappa}.
	\end{align}

	\begin{theorem}[Basic properties of $\HK_\kappa$]
		\label{thm:HKBasic}
		For any $\kappa \in (0,\infty)$, the Hellinger--Kantorovich distance $\HK_\kappa$ metrizes the weak* topology over $\measp(\Omega)$.
		The metric space $(\measp(\Omega), \HK_k)$ is separable and complete. Furthermore, it is a proper metric space, i.e.~every bounded set is relatively compact (see \cite[Section~7.5]{liero2018optimal}).
	\end{theorem}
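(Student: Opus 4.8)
The plan is to assemble the statement from facts in \cite{liero2018optimal}, first reducing to a single scale: by Remark~\ref{rem:HKKappaRescaling} the map $x \mapsto x/\kappa$ is a homeomorphism $\Omega \to \Omega/\kappa$ preserving total mass, so its push-forward is simultaneously a homeomorphism for the weak* topologies and an isometry between $(\measp(\Omega),\HK_\kappa)$ and $(\measp(\Omega/\kappa),\HK_1)$; hence it suffices to argue for one fixed $\kappa$. That $\HK_\kappa$ is a finite, non-negative, symmetric metric: symmetry and non-negativity are read off \eqref{eq:HKSoftMarginal}, and $\HK_\kappa(\mu,\mu)=0$ follows from the diagonal competitor $\gamma=(\mathrm{id},\mathrm{id})_\#\mu$, whose transport cost vanishes (as $\hat c_\kappa(x,x)=0$) and whose two $\KL$ terms vanish. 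The triangle inequality is the one genuinely hard metric axiom; I would not reprove it but quote \cite{liero2018optimal}, where it is obtained by lifting $\HK_\kappa$ to a Wasserstein-type distance on the metric cone over $\Omega$ (equivalently, by a gluing argument on the semi-coupling formulation \eqref{eq:HKSemiCoupling}). A small but useful observation for the rest is that $\gamma=0$ is the \emph{only} competitor in \eqref{eq:HKSoftMarginal} for the pair $(0,\mu)$ (any nonzero $\gamma$ makes $\KL(\proj_1\gamma|0)=+\infty$), so that $\HK_\kappa(0,\mu)=\sqrt{\|\mu\|}$.

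For the metrization of the weak* topology over $\measp(\Omega)$ I would prove the two implications separately. If $\HK_\kappa(\mu_n,\mu)\to 0$, then testing the dual \eqref{eq:HKDual} with pairs $(\psi,\phi)$ of the form $\psi= s\pm t g$, $\phi = s'\pm t g$ with $g$ Lipschitz and $s,s',t$ small enough that $(\psi,\phi)\in Q_\kappa$ (using that near the diagonal $\Cos^2(|x-y|/\kappa)$ is close to $1$ with a controlled second-order correction) yields, for every Lipschitz $g$, a bound $|\int g\,\diff\mu_n-\int g\,\diff\mu|\le \omega(\HK_\kappa(\mu_n,\mu))$ with $\omega(r)\to0$; since Lipschitz functions are dense in $\c(\Omega)$, this gives $\mu_n\weakstar\mu$. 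Conversely, if $\mu_n\weakstar\mu$ then already $\|\mu_n\|\to\|\mu\|$ (test with the constant function $\mathbf 1\in\c(\Omega)$), and one exhibits an explicit competitor in \eqref{eq:HKSoftMarginal}: fix a finite Borel partition $\{A_j\}$ of $\Omega$ with $\mu(\partial A_j)=0$ and mesh $<\kappa\pi/2$, transport $\mu_n$ to $\mu$ block by block and use the $\KL$ terms to repair the block masses; the resulting upper bound on $\HK_\kappa^2(\mu_n,\mu)$ has limsup in $n$ controlled by (roughly) $-2\log\cos(\mathrm{mesh}/\kappa)\cdot\|\mu\|$, and refining the partition drives this to $0$, so $\HK_\kappa(\mu_n,\mu)\to0$. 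This is exactly the content of the relevant statement in \cite{liero2018optimal}; the delicate half is the first implication, i.e.\ producing enough admissible dual pairs to recover the full weak* topology around an arbitrary measure. Positivity, $\HK_\kappa(\mu,\nu)=0\Rightarrow\mu=\nu$, then follows since the weak* topology is Hausdorff.

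With metrization in hand the remaining properties are routine. Separability: the countable set of finite rational combinations of Dirac masses at points of a countable dense subset of $\Omega$ is weak*-dense in $\measp(\Omega)$, hence $\HK_\kappa$-dense. Completeness: a $\HK_\kappa$-Cauchy sequence $(\mu_n)$ is $\HK_\kappa$-bounded, so by the triangle inequality together with $\HK_\kappa(0,\mu_n)=\sqrt{\|\mu_n\|}$ it has bounded total mass; hence it lies in a weak*-compact mass ball $\{\mu:\mu(\Omega)\le R\}$ (Banach--Alaoglu on the compact $\Omega$), so some subsequence converges weak*, hence $\HK_\kappa$-converges by the previous step, and a Cauchy sequence with a convergent subsequence converges. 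Properness: the same mass bound shows every $\HK_\kappa$-bounded set is contained in such a mass ball, which is weak*-compact and therefore $\HK_\kappa$-compact, so the closure of the set is a closed subset of a compact set, hence compact; I would also cite \cite[Section~7.5]{liero2018optimal} here.

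In short, the only non-formal ingredients are the triangle inequality (via the cone construction) and the lower bound in the metrization; both are taken from \cite{liero2018optimal}, and separability, completeness and properness are then derived from weak* compactness of mass-bounded subsets of $\measp(\Omega)$ on the compact domain $\Omega$, exactly as one does for $W$ in Theorem~\ref{thm:W2Basic}.
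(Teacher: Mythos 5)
The paper does not prove this theorem at all: it is quoted directly from the literature, with the entire content delegated to \cite[Section~7.5]{liero2018optimal} (metrization, completeness, properness) — so there is no in-paper argument to match yours against. Your reconstruction is consistent with that source and is essentially correct as a sketch: the reduction to $\kappa=1$ via Remark \ref{rem:HKKappaRescaling}, the identity $\HK_\kappa(0,\mu)=\sqrt{\|\mu\|}$ (forced $\gamma=0$, $\KL(0|\mu)=\|\mu\|$), and the derivation of separability, completeness and properness from weak* compactness of the sets $\{\mu\ge 0:\ \|\mu\|\le R\}$ on the compact $\Omega$ are all sound, and you correctly isolate the two genuinely hard ingredients (triangle inequality via the cone lifting; the implication $\HK_\kappa\to 0\Rightarrow$ weak*) and outsource them to the same reference the paper cites. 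Two small points if you wanted to make the sketch airtight: in the dual-pair construction $\psi=tg$, $\phi=-tg-ct^2$ the admissibility in $Q_\kappa$ requires a quadratic correction of size roughly $c\gtrsim \sup g^2+\mathrm{Lip}(g)^2\kappa^2/4$ (your ``$s,s',t$ small'' phrasing hides this, and without the correction the diagonal constraint already fails), after which optimizing in $t$ gives the claimed modulus $\omega$; and metrizability of the weak* topology on the whole cone $\measp(\Omega)$ (not just on bounded sets) deserves a word — it holds because testing against $\mathbf{1}$ controls the mass on the positive cone, but it is exactly the kind of statement one should either prove or, as the paper does, take from \cite{liero2018optimal}. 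In short: your route buys a mostly self-contained derivation of the ``soft'' parts from standard compactness, where the paper simply cites everything; neither approach avoids relying on \cite{liero2018optimal} for the metric structure itself.
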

	
	As we seek to study the evolution of the Hellinger--Kantorovich barycenter over varying length scales we now recall the corresponding result.
	For $\mu,\nu \in \measp(\Omega)$ the \emph{Hellinger distance} is defined as
	\begin{align}
		\label{eq:Hell}
		\Hell^2(\mu, \nu) \assign \int_\Omega \left( \sqrt{\frac{\diff \mu}{\diff \tau}} - \sqrt{\frac{\diff \nu}{\diff \tau}} \right)^2 \!\diff\tau,
	\end{align}
	where $\tau \in \measp(\Omega)$ is an arbitrary measure such that $\mu,\nu \ll \tau$. Again, since the function $(s,t) \mapsto (\sqrt{s}-\sqrt{t})^2$ is positively 1-homogeneous, the definition of $\Hell(\mu,\nu)$ does not depend on the choice of the (admissible)~$\tau$. As observed in \cite{liero2018optimal}, the Hellinger--Kantorovich distance converges towards the Hellinger and the Wasserstein distance as one sends $\kappa \to 0$ or $\kappa \to \infty$ respectively.
	
	\begin{theorem}[Scaling limits \protect{\cite[Theorems 7.22, 7.24]{liero2018optimal}}]
		\label{thm:HKScaling}
		For $\mu, \nu \in \measp(\Omega)$, one finds that the function $(0,\infty) \ni \kappa \mapsto \HK_\kappa^2(\mu,\nu)$ is non-increasing and
		\begin{align}
			\label{eq:Helllimit}
			\lim_{\kappa \to 0} \HK^2_\kappa(\mu,\nu) \nearrow \Hell^2(\mu,\nu).
		\end{align}
		On the other hand, the function $(0,\infty) \ni \kappa \mapsto \kappa^2 \cdot \HK_\kappa^2(\mu,\nu)$ is non-decreasing and
		\begin{align}
			\label{eq:W2limit}
			\lim_{\kappa \to \infty} \kappa^2 \HK^2_\kappa(\mu,\nu) \nearrow W^2(\mu,\nu).
		\end{align}
		The function $(0,\infty) \ni \kappa \mapsto \HK^2_\kappa(\mu,\nu)$ is continuous.
	\end{theorem}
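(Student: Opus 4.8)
The plan is to reduce each assertion to a pointwise property of the integrand in one of the two explicit formulations \eqref{eq:HKSoftMarginal}, \eqref{eq:HKSemiCoupling}, whose feasible sets do not depend on $\kappa$. Fix $x_1,x_2\in\Omega$, $m_1,m_2\geq 0$ and set $r=|x_1-x_2|$. Since $\Cos(r/\kappa)$ is non-decreasing in $\kappa$, the integrand $c_\kappa(x_1,m_1,x_2,m_2)$ of \eqref{eq:HKSemiCoupling} is non-increasing in $\kappa$, and taking the infimum over feasible triples $(\gamma_1,\gamma_2,\gamma)$ gives that $\kappa\mapsto\HK_\kappa^2(\mu,\nu)$ is non-increasing. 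For the second monotonicity I claim $\kappa\mapsto\kappa^2 c_\kappa(x_1,m_1,x_2,m_2)$ is non-decreasing: when $r/\kappa\geq\pi/2$ it equals $\kappa^2(m_1+m_2)$, and for $u:=r/\kappa\in(0,\pi/2)$ it is equivalent to the map $u\mapsto u^{-2}\bigl((m_1+m_2)-2\sqrt{m_1m_2}\cos u\bigr)$ being non-increasing, which after differentiating and using $m_1+m_2\geq 2\sqrt{m_1m_2}$ reduces to the elementary inequality $u\sin u+2\cos u\leq 2$ on $[0,\pi/2]$ (immediate from $\tan u\geq u$); passing to the infimum yields that $\kappa\mapsto\kappa^2\HK_\kappa^2(\mu,\nu)$ is non-decreasing. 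Continuity is then purely formal: for $\kappa_n\to\kappa_0$, the non-increasing function pins the one-sided limits of $\HK_{\kappa_n}^2$ above $\HK_{\kappa_0}^2$ from the left and below it from the right, while the non-decreasing function $\kappa\mapsto\kappa^2\HK_\kappa^2$ together with $\kappa_n^2\to\kappa_0^2$ gives the opposite inequalities, so both one-sided limits equal $\HK_{\kappa_0}^2$.

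Since $\kappa\mapsto\HK_\kappa^2(\mu,\nu)$ is non-increasing, $\lim_{\kappa\to0}\HK_\kappa^2(\mu,\nu)$ exists, is approached from below, and equals $\sup_{\kappa>0}\HK_\kappa^2(\mu,\nu)$; it remains to identify this supremum with $\Hell^2(\mu,\nu)$. For the upper bound, fix $\tau\in\measp(\Omega)$ with $\mu,\nu\ll\tau$ and test \eqref{eq:HKSemiCoupling} with the diagonal competitor $\gamma=(\mathrm{id},\mathrm{id})_\#\tau$, $\gamma_1=(\mathrm{id},\mathrm{id})_\#\mu$, $\gamma_2=(\mathrm{id},\mathrm{id})_\#\nu$: since $\Cos(0)=1$, the cost collapses to $\int_\Omega(\sqrt{\diff\mu/\diff\tau}-\sqrt{\diff\nu/\diff\tau})^2\,\diff\tau=\Hell^2(\mu,\nu)$ for every $\kappa$, so in particular $\HK_\kappa^2\leq\Hell^2\leq\|\mu\|+\|\nu\|<\infty$. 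For the lower bound, take $\kappa_n\to0$ and plans $\gamma_n$ near-optimal in \eqref{eq:HKSoftMarginal}; each of the three non-negative terms is then bounded, a Young-type inequality for $s\mapsto s\log s$ bounds $\|\gamma_n\|=\|\proj_1\gamma_n\|$ via $\KL(\proj_1\gamma_n|\mu)$, and compactness of $\Omega$ lets us extract a weak* limit $\gamma_n\weakstar\gamma_\infty$ along a subsequence; finiteness of $\int\hat c_{\kappa_n}\,\diff\gamma_n$ forces $\mathrm{supp}\,\gamma_n\subset\{|x-y|\leq\kappa_n\pi/2\}$, so $\gamma_\infty$ is concentrated on the diagonal and $\proj_1\gamma_n,\proj_2\gamma_n\weakstar\sigma$ for a common $\sigma\in\measp(\Omega)$. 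Weak* lower semicontinuity of $\KL(\cdot|\mu)$ in its first argument then gives $\lim_n\HK_{\kappa_n}^2(\mu,\nu)\geq\KL(\sigma|\mu)+\KL(\sigma|\nu)\geq\inf_{\sigma'\in\measp(\Omega)}\bigl(\KL(\sigma'|\mu)+\KL(\sigma'|\nu)\bigr)$, and a pointwise minimization over the density $\diff\sigma'/\diff\tau$ (optimum $\sqrt{(\diff\mu/\diff\tau)(\diff\nu/\diff\tau)}$) evaluates this infimum to $\Hell^2(\mu,\nu)$; as this holds for every sequence $\kappa_n\to0$, the supremum is $\Hell^2(\mu,\nu)$, and convergence is increasing by the monotonicity.

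Likewise $\lim_{\kappa\to\infty}\kappa^2\HK_\kappa^2(\mu,\nu)$ exists, is approached from below, and equals $\sup_\kappa\kappa^2\HK_\kappa^2(\mu,\nu)$. If $\|\mu\|\neq\|\nu\|$, a Cauchy--Schwarz estimate in \eqref{eq:HKSemiCoupling} together with $\Cos\leq1$ and $\|\gamma_i\|=\|\proj_i\gamma_i\|$ gives $\HK_\kappa^2(\mu,\nu)\geq(\sqrt{\|\mu\|}-\sqrt{\|\nu\|})^2>0$, hence $\kappa^2\HK_\kappa^2\to\infty=W^2(\mu,\nu)$ by convention. Assume now $\|\mu\|=\|\nu\|$. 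For the upper bound take an optimal plan $\pi\in\Gamma(\mu,\nu)$ (which exists by compactness) and use $\gamma=\gamma_1=\gamma_2=\pi$ in \eqref{eq:HKSemiCoupling}: the integrand is $2\bigl(1-\Cos(|x-y|/\kappa)\bigr)$, and $1-\cos t\leq t^2/2$ gives $\kappa^2\HK_\kappa^2(\mu,\nu)\leq\int_{\Omega^2}|x-y|^2\,\diff\pi=W^2(\mu,\nu)$ once $\kappa>\frac{2}{\pi}\,\mathrm{diam}(\Omega)$. For the lower bound take $\kappa_n\to\infty$ and near-optimal triples $(\gamma_1^n,\gamma_2^n,\gamma^n)$ in \eqref{eq:HKSemiCoupling} with $\gamma^n=\gamma_1^n+\gamma_2^n$; as $\|\gamma_i^n\|$ is fixed we may assume (along a subsequence) $\gamma_i^n\weakstar\gamma_i^\infty$ with $\proj_1\gamma_1^\infty=\mu$, $\proj_2\gamma_2^\infty=\nu$. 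Writing $\sigma_i^n=\diff\gamma_i^n/\diff\gamma^n$ and splitting $\kappa_n^2 c_{\kappa_n}=\kappa_n^2(\sqrt{\sigma_1^n}-\sqrt{\sigma_2^n})^2+2\kappa_n^2\sqrt{\sigma_1^n\sigma_2^n}\bigl(1-\Cos(|x-y|/\kappa_n)\bigr)$ into two non-negative parts, boundedness forces $\int(\sqrt{\sigma_1^n}-\sqrt{\sigma_2^n})^2\,\diff\gamma^n=O(\kappa_n^{-2})$, whence first $\gamma_1^\infty=\gamma_2^\infty=:\pi\in\Gamma(\mu,\nu)$ and then, by a further Cauchy--Schwarz estimate, $\sqrt{\sigma_1^n\sigma_2^n}\,\gamma^n\weakstar\pi$ as well; since $2\kappa_n^2\bigl(1-\cos(|x-y|/\kappa_n)\bigr)\to|x-y|^2$ uniformly on $\Omega^2$, the second part has $\liminf$ at least $\int_{\Omega^2}|x-y|^2\,\diff\pi\geq W^2(\mu,\nu)$, and by near-optimality the same lower bound holds for $\lim_n\kappa_n^2\HK_{\kappa_n}^2(\mu,\nu)$; combined with the upper bound, the supremum is $W^2(\mu,\nu)$, and convergence is increasing.

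The two monotonicities, the continuity reduction, and the two upper bounds are essentially routine. The hard part is the lower bound as $\kappa\to\infty$: the semi-coupling cost is not lower semicontinuous along weak* convergence of $(\gamma_1,\gamma_2,\gamma)$ in any direct sense, and one genuinely needs the splitting into the $(\sqrt{\sigma_1}-\sqrt{\sigma_2})^2$ term and the $\sqrt{\sigma_1\sigma_2}$ term, both to see that the two semi-couplings coalesce into a single honest transport plan and to upgrade weak* convergence of $\gamma_1^n$ to convergence of the geometric-mean measure $\sqrt{\sigma_1^n\sigma_2^n}\,\gamma^n$ tested against $|x-y|^2$. The easier analogue of this coalescence — onto the diagonal, combined with lower semicontinuity of $\KL$ — underlies the $\kappa\to0$ lower bound; a secondary technical point common to both limits is the a priori mass bound on the (sub)optimal plans extracted from finiteness of the objective.
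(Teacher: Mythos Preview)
The paper does not prove this theorem itself: it is cited from \cite[Theorems 7.22, 7.24]{liero2018optimal}, and the only argument supplied in the text is the deduction of continuity from the two monotonicities --- exactly your ``purely formal'' step. The paragraph following the statement is a heuristic sketch, not a proof. Your argument, by contrast, is a correct self-contained proof. The monotonicities via the $\kappa$-independent feasible set of \eqref{eq:HKSemiCoupling} and the elementary inequality $u\sin u+2\cos u\le 2$ on $[0,\pi/2]$ are clean; the $\kappa\to 0$ lower bound (diagonal concentration of near-optimal $\gamma_n$ in \eqref{eq:HKSoftMarginal}, weak* lower semicontinuity of $\KL$, and the pointwise identification $\inf_\sigma\bigl(\KL(\sigma|\mu)+\KL(\sigma|\nu)\bigr)=\Hell^2(\mu,\nu)$) makes the paper's informal remark rigorous. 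For $\kappa\to\infty$ you take a different route from the paper's sketch, which would have used \eqref{eq:HKSoftMarginal} and argued that the diverging prefactor $\kappa^2$ on the $\KL$ terms forces the marginal constraints: instead you stay with \eqref{eq:HKSemiCoupling}, split $\kappa^2 c_\kappa$ into the Hellinger-type term $\kappa^2(\sqrt{\sigma_1}-\sqrt{\sigma_2})^2$ and the transport-type term $2\kappa^2\sqrt{\sigma_1\sigma_2}\bigl(1-\Cos(|x-y|/\kappa)\bigr)$, use boundedness of the first to show $\gamma_1^n-\gamma_2^n\to 0$ in total variation (hence the common weak* limit lies in $\Gamma(\mu,\nu)$) and then $\sqrt{\sigma_1^n\sigma_2^n}\,\gamma^n\weakstar\pi$ by a second Cauchy--Schwarz, and pass to the limit in the second term against the uniformly convergent $2\kappa_n^2(1-\cos(|x-y|/\kappa_n))\to|x-y|^2$. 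This is a tidy way around the lack of joint lower semicontinuity you correctly flag, and it keeps the mass control trivial since $\|\gamma_i^n\|$ is fixed by the marginal constraints.
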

	Continuity of $\HK^2_\kappa$ with respect to $\kappa$ follows directly from the fact that the function is non-increasing while $\kappa \mapsto \kappa^2 \cdot \HK^2_\kappa(\mu,\nu)$ is non-decreasing.
	Note that the case $\|\mu\|\neq \|\nu\|$ is explicitly allowed as $\kappa \to \infty$, in which case the limiting value is $+\infty$.
	These scaling limits can be guessed from \eqref{eq:HKSoftMarginal}: As $\kappa \to 0$, the function $\hat{c}_\kappa$ goes to infinity everywhere except on the diagonal, restricting asymptotically feasible $\gamma$ to the diagonal. One can then quickly verify that minimizing \eqref{eq:HKSoftMarginal} only over diagonal $\gamma$ yields \eqref{eq:Hell}.
	Conversely, looking at $\kappa^2 \cdot \HK_\kappa^2(\mu,\nu)$, one can guess from $\lim_{\kappa \to \infty} \kappa^2 \cdot \hat{c}_\kappa(x,y) = \|x-y\|^2$ that the integral $\int_{\Omega^2} \hat{c}_\kappa \diff \gamma$ converges to the standard Wasserstein transport cost, while the term $\kappa^2 \cdot \KL(\proj_1 \gamma|\mu)$ increasingly penalizes deviations between $\proj_1 \gamma$ and $\mu$, and likewise for the second marginal term, thus asymptotically enforcing $\gamma \in \Gamma(\mu,\nu)$.
	
	The following bounds can be shown to hold. 
	\begin{proposition}[Mass-rescaling for $\HK_\kappa$]
		For $\kappa \in (0,\infty)$ and $\mu, \nu \in \measp(\Omega)$, one finds
		\begin{equation}
			\label{eq:rescale}
			\HK_\kappa^2(\mu, \nu) = \sqrt{\|\mu\|\|\nu\|}\HK_\kappa^2\left( \frac{\mu}{\|\mu\|}, \frac{\nu}{\|\nu\|} \right) + (\sqrt{\|\mu\|} - \sqrt{\|\nu\|})^2
		\end{equation}
		with the convention $\mu/\|\mu\|=0$ in the case of $\mu=0$ (and likewise for $\nu$).
		Additionally,
		\begin{equation}
			\label{eq:bounds}
			\HK_\kappa^2(\mu, \nu) \leq {\|\mu\|} + {\|\nu\|}.
		\end{equation}
	\end{proposition}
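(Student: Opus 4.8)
The plan is to get \eqref{eq:bounds} directly from the soft-marginal formulation \eqref{eq:HKSoftMarginal} and to derive the mass-rescaling identity \eqref{eq:rescale} by a scaling change of variables in the semi-coupling formulation \eqref{eq:HKSemiCoupling}. For \eqref{eq:bounds} I would simply test \eqref{eq:HKSoftMarginal} with $\gamma = 0$: then $\int_{\Omega^2}\hat c_\kappa\,\diff 0 = 0$, $\proj_1 0 = \proj_2 0 = 0$, and since $\varphi(0) = 1$ one has $\KL(0|\mu) = \int_\Omega\varphi(0)\,\diff\mu = \|\mu\|$ and likewise $\KL(0|\nu) = \|\nu\|$, so $\HK_\kappa^2(\mu,\nu)\le\|\mu\|+\|\nu\|$. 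For \eqref{eq:rescale} I would first dispose of the degenerate cases: if $\mu = 0$ then every feasible $\gamma$ in \eqref{eq:HKSoftMarginal} must satisfy $\proj_1\gamma = 0$, hence $\gamma = 0$, so $\HK_\kappa^2(0,\nu) = \KL(0|\nu) = \|\nu\|$, which agrees with the right-hand side of \eqref{eq:rescale} under the stated convention ($\|\mu\| = 0$ kills the first summand and the second equals $\|\nu\|$); the case $\nu = 0$ is symmetric and $\mu = \nu = 0$ is trivial.

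For the generic case $a\assign\|\mu\|>0$, $b\assign\|\nu\|>0$, set $\bar\mu\assign\mu/a$ and $\bar\nu\assign\nu/b$. The key point is that $(\gamma_1,\gamma_2,\gamma)\mapsto\big(\tfrac{1}{a}\gamma_1,\tfrac{1}{b}\gamma_2,\gamma\big)$ is a bijection between the feasible sets of \eqref{eq:HKSemiCoupling} for $(\mu,\nu)$ and for $(\bar\mu,\bar\nu)$: the constraints $\gamma_1,\gamma_2\ll\gamma$ involve only $\gamma$ and are preserved, and $\proj_1\gamma_1 = \mu$ is equivalent to $\proj_1(\tfrac{1}{a}\gamma_1) = \bar\mu$, similarly for the second marginal. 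Under this map the Radon--Nikodym densities rescale as $\tfrac{\diff\gamma_1}{\diff\gamma}\mapsto\tfrac{1}{a}\tfrac{\diff\gamma_1}{\diff\gamma}$ and $\tfrac{\diff\gamma_2}{\diff\gamma}\mapsto\tfrac{1}{b}\tfrac{\diff\gamma_2}{\diff\gamma}$. Writing $u_i\assign\tfrac{\diff\gamma_i}{\diff\gamma}$ and $I\assign\int_{\Omega^2}\sqrt{u_1u_2}\,\Cos(|x-y|/\kappa)\,\diff\gamma$ (a finite number, since $\sqrt{u_1u_2}\le\tfrac{1}{2}(u_1+u_2)$ and $\int_{\Omega^2}u_i\,\diff\gamma = \|\gamma_i\|<\infty$) and using the normalisations $\|\gamma_1\| = \|\mu\| = a$, $\|\gamma_2\| = \|\nu\| = b$, a direct computation shows that the objective of \eqref{eq:HKSemiCoupling} at $(\gamma_1,\gamma_2,\gamma)$ for $(\mu,\nu)$ equals $a+b-2I$, whereas at the image $\big(\tfrac{1}{a}\gamma_1,\tfrac{1}{b}\gamma_2,\gamma\big)$ for $(\bar\mu,\bar\nu)$ it equals $2-\tfrac{2}{\sqrt{ab}}I$. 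Eliminating $I$ yields, for each corresponding pair of competitors,
\[
\big(\text{objective for }(\mu,\nu)\big)\;=\;(\sqrt a-\sqrt b)^2+\sqrt{ab}\,\big(\text{objective for }(\bar\mu,\bar\nu)\big),
\]
and passing to the infimum on both sides — legitimate because $t\mapsto(\sqrt a-\sqrt b)^2+\sqrt{ab}\,t$ is increasing and the correspondence is bijective — gives \eqref{eq:rescale}.

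I do not expect a genuine obstacle here; the only parts that require care are the bookkeeping of the $1$-homogeneous rescaling of the densities together with the mass normalisations $\|\gamma_i\|\in\{a,b\}$, and checking the zero-mass cases against the convention $\mu/\|\mu\| = 0$. As an alternative that avoids arguing on the semi-coupling problem, the same identity \eqref{eq:rescale} also follows from the dual \eqref{eq:HKDual}: the affine substitution $(\psi,\phi)\mapsto\big(1-\sqrt{b/a}\,(1-\psi),\,1-\sqrt{a/b}\,(1-\phi)\big)$ maps $Q_\kappa$ bijectively onto itself (it leaves $(1-\psi)(1-\phi)$ unchanged and preserves $\psi,\phi\le1$), and inserting it into \eqref{eq:HKDual} reproduces \eqref{eq:rescale} after an analogous direct computation.
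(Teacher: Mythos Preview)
Your argument is correct and self-contained, but it differs from the paper's route. For \eqref{eq:rescale} the paper simply invokes an external result (\cite[Theorem~3.3]{Laschos2019}) rather than reproving it; your derivation via the semi-coupling formulation \eqref{eq:HKSemiCoupling} (and the dual alternative via the affine bijection of $Q_\kappa$) is exactly the kind of direct computation that underlies that cited result, so you have effectively supplied an independent proof where the paper outsources it. For \eqref{eq:bounds} the paper goes through the Hellinger comparison $\HK_\kappa^2\le\Hell^2$ from Theorem~\ref{thm:HKScaling} and then the trivial pointwise bound $(\sqrt{s}-\sqrt{t})^2\le s+t$, whereas you obtain it in one stroke by plugging $\gamma=0$ into \eqref{eq:HKSoftMarginal}; your way is more elementary and does not rely on the scaling monotonicity. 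The trade-off is brevity versus self-containment: the paper's proof is two lines but leans on prior results, while yours stays entirely within the formulations already introduced in Section~\ref{sec:Background}.
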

	
	\begin{proof}
		The equality in \eqref{eq:rescale} follows from \cite[Theorem 3.3]{Laschos2019}. By Theorem \ref{thm:HKScaling}, we have $\HK_\kappa^2\left( \mu, \nu \right)\leq \Hell^2( \mu, \nu )$, and \eqref{eq:bounds} follows directly.
	\end{proof}
	
	\section{Hellinger--Kantorovich barycenter of a continuum of measures}
	\label{sec:HKBarycenter}
	\subsection{Problem setup}
	The barycenter between a finite collection of measures with respect to the Hellinger--Kantorovich metric has been studied in \cite{Chung2021,friesecke2019barycenters}. In this section we generalize these results to infinitely many input measures, including the uncountable case of a continuum of input measures.
	
	For a constant $M \in (0,\infty)$, which we will assume to be fixed throughout the paper, we define
	\[
	\setm \assign \{ \mu \in \measp(\Omega) | \|\mu\| \leq M \}.
	\]
	Since $\setm$ is weak* closed, by Theorem \ref{thm:HKBasic} the metric space $(\setm,\HK_\kappa)$ is compact for all $\kappa \in (0,\infty)$. We will describe the collection of input measures (and their weights) of which to compute the barycenter as a probability measure $\Lambda \in \prob(\setm)$ where $\setm$ is equipped with the Borel $\sigma$-algebra induced by $\HK_\kappa$ (which is the same for any $\kappa \in (0,\infty)$).
	Since $(\setm,\HK_\kappa)$ is compact, weak* convergence on $\prob(\setm)$ is metrized by the Wasserstein distance over $\prob(\setm)$ (see Theorem \ref{thm:W2Basic}).
	
	\medskip
	
	For $\Lambda \in \prob(\setm)$ and for $\kappa \in (0,\infty)$, the primal problem we are interested in is
	\begin{equation}\label{eq:primal}
		\inf \left\{ J_{\Lambda, \kappa}(\nu) \assign \int_{\setm} \HK_{\kappa}^2(\mu,\nu)\,\diff\Lambda(\mu) \middle| \nu \in \measp(\Omega) \right\}.
		\tag{$\mathcal{P}_{\Lambda,\kappa}$}
	\end{equation}
	The finite case of computing the barycenter between input measures $\mu_1,\ldots,\mu_n \in \setm$ with weights $\lambda_1,\ldots,\lambda_n$ where $\lambda_i > 0$ and $\sum_{i=1}^n \lambda_i=1$ is recovered by setting $\Lambda \assign \sum_{i=1}^n \lambda_i\,\delta_{\mu_i}$.
	
	\subsection{Existence and stability of minimizers}
	\label{sec:ExistenceStability}
	\begin{proposition}\label{prop:primalexistence}
		Let $\Lambda \in \prob(\setm)$ and $\kappa \in (0, \infty)$. Then, \eqref{eq:primal} admits a minimizer $\nu \in \setm$.
	\end{proposition}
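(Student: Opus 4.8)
The plan is to run the direct method of the calculus of variations, feeding it the compactness and continuity properties of $\HK_\kappa$ recorded in Theorem~\ref{thm:HKBasic} together with the elementary mass bounds \eqref{eq:rescale}--\eqref{eq:bounds}. First I would check that $J_{\Lambda,\kappa}$ is well defined and finite on $\measp(\Omega)$: for a fixed $\nu$ the map $\setm \ni \mu \mapsto \HK_\kappa^2(\mu,\nu)$ is continuous (since $\HK_\kappa$ is a metric it is $1$-Lipschitz in each of its two arguments), hence Borel measurable for the $\sigma$-algebra carried by $\setm$, and it is nonnegative and bounded by $\|\mu\| + \|\nu\| \leq M + \|\nu\|$ by \eqref{eq:bounds}. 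Testing with $\nu = 0$ then gives $J_{\Lambda,\kappa}(0) = \int_\setm \|\mu\|\,\diff\Lambda(\mu) \leq M$, so the infimum in \eqref{eq:primal} is a finite, nonnegative real number.

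Next I would establish coercivity in the total mass. Discarding the nonnegative first summand in the rescaling identity \eqref{eq:rescale} yields $\HK_\kappa^2(\mu,\nu) \geq (\sqrt{\|\mu\|}-\sqrt{\|\nu\|})^2$, and since $\|\mu\| \leq M$ for $\Lambda$-a.e.\ $\mu$ this dominates $(\sqrt{\|\nu\|}-\sqrt{M})^2$ whenever $\|\nu\| \geq M$. Integrating against $\Lambda$ gives $J_{\Lambda,\kappa}(\nu) \geq (\sqrt{\|\nu\|}-\sqrt{M})^2$ for all such $\nu$, so any minimizing sequence $(\nu_n)_n$ has uniformly bounded mass. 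Since bounded subsets of $(\measp(\Omega),\HK_\kappa)$ are relatively compact (properness, Theorem~\ref{thm:HKBasic}; equivalently Banach--Alaoglu in the weak-$*$ topology, which $\HK_\kappa$ metrizes), a subsequence converges weakly-$*$, $\nu_n \weakstar \nu^\star$, and as $\setm$ is weak-$*$ closed we get $\nu^\star \in \setm$.

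Finally I would verify weak-$*$ lower semicontinuity of $J_{\Lambda,\kappa}$ along this subsequence. For $\Lambda$-a.e.\ fixed $\mu$, continuity of the metric gives $\HK_\kappa^2(\mu,\nu_n) \to \HK_\kappa^2(\mu,\nu^\star)$, so Fatou's lemma applied to the nonnegative integrands yields
\[
J_{\Lambda,\kappa}(\nu^\star) = \int_\setm \HK_\kappa^2(\mu,\nu^\star)\,\diff\Lambda(\mu) \leq \liminf_{n\to\infty} \int_\setm \HK_\kappa^2(\mu,\nu_n)\,\diff\Lambda(\mu) = \liminf_{n\to\infty} J_{\Lambda,\kappa}(\nu_n),
\]
and the right-hand side equals the infimum in \eqref{eq:primal}; hence $\nu^\star \in \setm$ is a minimizer (in fact over all of $\measp(\Omega)$). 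I do not expect a genuine obstacle: the only points that need a word of care are the measurability of $\mu \mapsto \HK_\kappa^2(\mu,\nu)$ on $\setm$ and the need to invoke compactness and continuity in the same topology (weak-$*$ $=$ $\HK_\kappa$), both of which follow at once from Theorem~\ref{thm:HKBasic}.
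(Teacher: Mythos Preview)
Your proposal is correct and follows essentially the same direct-method strategy as the paper: finiteness of the infimum via the test measure $\nu=0$, compactness of a minimizing sequence via a mass bound derived from \eqref{eq:rescale}, and lower semicontinuity via Fatou together with the continuity of $\mu \mapsto \HK_\kappa^2(\mu,\nu)$.

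There is one small slip. Your coercivity estimate $J_{\Lambda,\kappa}(\nu) \geq (\sqrt{\|\nu\|}-\sqrt{M})^2$ for $\|\nu\|\geq M$ only yields a uniform mass bound $\|\nu_n\|\leq C$ for some $C$ possibly larger than $M$; it does not place the $\nu_n$ inside $\setm$. Consequently the sentence ``as $\setm$ is weak-$*$ closed we get $\nu^\star \in \setm$'' does not follow as written. The paper closes this gap differently: it observes directly from \eqref{eq:rescale} that if $\|\nu_n\|>M$ then replacing $\nu_n$ by $\tfrac{M}{\|\nu_n\|}\nu_n$ \emph{strictly} lowers $J_{\Lambda,\kappa}$, so without loss of generality the minimizing sequence already lies in $\setm$. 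You can adopt the same rescaling step, or alternatively apply it once to the limit $\nu^\star$ after you have established its minimality over $\measp(\Omega)$; either route immediately yields $\nu^\star\in\setm$.
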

	
	\begin{proof}
		We first observe that, by means of the upper bound in \eqref{eq:bounds}, we have
		\[
		\text{\eqref{eq:primal} } \leq J_{\Lambda,\kappa}(0) = \int_{\setm} \HK_\kappa^2(\mu,0)\,\diff\Lambda(\mu) \leq \int_{\setm} \|\mu\| \,\diff\Lambda(\mu) \leq M.
		\]
		Let $(\nu_n)_n \subset \measp(\Omega)$ be a minimizing sequence for \eqref{eq:primal}. For each $n > 0$, we can assume without loss of generality that $\nu_n \in \setm$. Indeed, suppose this is not the case, i.e. $\|\nu_n\| > M$. Then, for all $\mu \in \setm$, by means of \eqref{eq:rescale}, we have
		\begin{align*}
			\HK_\kappa^2\left( \mu, \frac{M}{\|\nu_n\|}\nu_n \right)
			&=
			\sqrt{\|\mu\|M}\,\HK_\kappa^2\left( \frac{\mu}{\|\mu\|}, \frac{\nu_n}{\|\nu_n\|} \right) + (\sqrt{\|\mu\|} - \sqrt{M})^2
			\\
			&<
			\sqrt{\|\mu\|\|\nu_n\|} \, \HK_\kappa^2\left( \frac{\mu}{\|\mu\|}, \frac{\nu_n}{\|\nu_n\|} \right) + (\sqrt{\|\mu\|} - \sqrt{\|\nu_n\|})^2 = \HK_\kappa^2(\mu, \nu_n),
		\end{align*}
		so that $J_{\Lambda, \kappa}(M/\|\nu_n\| \cdot \nu_n) < J_{\Lambda, \kappa}(\nu_n)$. Hence, upon possibly replacing $\nu_n$ with $M/\|\nu_n\| \cdot \nu_n$, the sequence $(\nu_n)_n$ is entirely contained in $\setm$. By compactness of $(\setm, \HK_\kappa)$, there exists a cluster point $\nu \in \setm$ such that, up to a subsequence, $\nu_n \weakstar \nu$ as $n \to \infty$, or equivalently $\HK_\kappa(\nu_n,\nu) \to 0$ as $n \to \infty$. By the upper bound in \eqref{eq:bounds} and by triangle inequality we also have
		\begin{align}
			|\HK_\kappa^2(\mu,\nu_n) - \HK_\kappa^2(\mu,\nu)| &= |\HK_\kappa(\mu,\nu_n) + \HK_\kappa(\mu,\nu)||\HK_\kappa(\mu,\nu_n) - \HK_\kappa(\mu,\nu)| \nonumber \\
			&\leq 2\sqrt{2M} \cdot \HK_\kappa(\nu_n,\nu) \to 0 \quad \text{as } n \to \infty \text{ for all } \mu \in \setm.
			\label{eq:HKUniformConv}
		\end{align}
		By means of Fatou's lemma and recalling that $(\nu_n)_n$ is a minimizing sequence, we conclude
		\[
		J_{\Lambda,\kappa}(\nu) = \int_{\setm} \HK_\kappa^2(\mu,\nu)\,\diff\Lambda(\mu)
		= \int_{\setm} \lim_{n \to \infty} \HK_\kappa^2(\mu,\nu_n)\,\diff\Lambda(\mu)
		\leq \liminf_{n \to \infty} \underbrace{\int_{\setm} \HK_\kappa^2(\mu,\nu_n)\,\diff\Lambda(\mu)}_{J_{\Lambda,\kappa}(\nu_n)} = \text{\eqref{eq:primal}},
		\]
		which provides minimality of $\nu$ for \eqref{eq:primal}.
	\end{proof}

	\begin{proposition}[Stability]
		\label{prop:stability}
		Fix $\kappa \in (0, \infty)$. Let $(\Lambda_n)_{n \in \N}$ be a weak* convergent sequence in $\prob(\setm)$ with limit $\Lambda \in \prob(\setm)$ and let $(\nu_n)_{n \in \N}$ be a weak* convergent sequence in $\measp(\Omega)$ with limit $\nu \in \measp(\Omega)$. Then, 
		\begin{equation}\label{eq:limitenergy}
			J_{\Lambda,\kappa}(\nu) = \lim_{n \to \infty} J_{\Lambda_n,\kappa}(\nu_n).
		\end{equation}
	\end{proposition}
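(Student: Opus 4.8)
The plan is to decouple the two sources of $n$-dependence in $J_{\Lambda_n,\kappa}(\nu_n)=\int_\setm \HK_\kappa^2(\mu,\nu_n)\,\diff\Lambda_n(\mu)$ --- the integrator $\Lambda_n$ and the second argument $\nu_n$ --- by inserting the intermediate quantity $J_{\Lambda_n,\kappa}(\nu)$ and estimating
\[
\big|J_{\Lambda_n,\kappa}(\nu_n) - J_{\Lambda,\kappa}(\nu)\big| \leq \big|J_{\Lambda_n,\kappa}(\nu_n) - J_{\Lambda_n,\kappa}(\nu)\big| + \big|J_{\Lambda_n,\kappa}(\nu) - J_{\Lambda,\kappa}(\nu)\big|,
\]
then showing that each term on the right-hand side vanishes as $n \to \infty$.

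For the preliminary bookkeeping I would first note that testing $\nu_n \weakstar \nu$ against the constant function $1 \in \c(\Omega)$ gives $\|\nu_n\| = \nu_n(\Omega) \to \nu(\Omega) = \|\nu\|$, so that $R \assign \max\{M,\ \sup_n \|\nu_n\|,\ \|\nu\|\}$ is finite; that, by Theorem \ref{thm:HKBasic}, $\HK_\kappa$ metrizes the weak* topology on $\measp(\Omega)$, hence $\HK_\kappa(\nu_n,\nu) \to 0$; and that, since $(\setm,\HK_\kappa)$ is compact, $\Lambda_n \weakstar \Lambda$ in $\prob(\setm)$ is equivalent to $\int_\setm f\,\diff\Lambda_n \to \int_\setm f\,\diff\Lambda$ for every $f \in \c(\setm)$.

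For the first term I would repeat the computation in \eqref{eq:HKUniformConv} verbatim, but now using \eqref{eq:bounds} in the form $\HK_\kappa(\mu,\nu_n),\,\HK_\kappa(\mu,\nu) \leq \sqrt{2R}$ for every $\mu \in \setm$: this yields $|\HK_\kappa^2(\mu,\nu_n) - \HK_\kappa^2(\mu,\nu)| \leq 2\sqrt{2R}\,\HK_\kappa(\nu_n,\nu)$ uniformly in $\mu \in \setm$, and integrating against the probability measure $\Lambda_n$ gives $|J_{\Lambda_n,\kappa}(\nu_n) - J_{\Lambda_n,\kappa}(\nu)| \leq 2\sqrt{2R}\,\HK_\kappa(\nu_n,\nu) \to 0$. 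For the second term I would observe that $\mu \mapsto \HK_\kappa^2(\mu,\nu)$ belongs to $\c(\setm)$: it is continuous because $\HK_\kappa(\cdot,\nu)$ is $1$-Lipschitz for the metric $\HK_\kappa$, and it is bounded by $M + \|\nu\|$ thanks to \eqref{eq:bounds}. Hence $J_{\Lambda_n,\kappa}(\nu) = \int_\setm \HK_\kappa^2(\mu,\nu)\,\diff\Lambda_n(\mu) \to \int_\setm \HK_\kappa^2(\mu,\nu)\,\diff\Lambda(\mu) = J_{\Lambda,\kappa}(\nu)$, and combining the two estimates yields \eqref{eq:limitenergy}.

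I do not expect a genuine obstacle here; the one place that needs a little care is the decoupling step, since neither the weak* convergence of $\Lambda_n$ alone nor that of $\nu_n$ alone permits passing to the limit directly. The uniform-in-$\mu$ modulus of continuity of $\nu \mapsto \HK_\kappa^2(\mu,\nu)$ coming from \eqref{eq:HKUniformConv} is exactly what makes the first difference controllable simultaneously for all $\mu$ (hence after integrating against the moving measure $\Lambda_n$), after which the second difference is a plain instance of weak* convergence against a fixed element of $\c(\setm)$.
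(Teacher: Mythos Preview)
Your proof is correct and follows essentially the same approach as the paper: both rely on the uniform convergence of $\mu \mapsto \HK_\kappa^2(\mu,\nu_n)$ to $\mu \mapsto \HK_\kappa^2(\mu,\nu)$ on $\setm$ (via the estimate \eqref{eq:HKUniformConv}) combined with weak* convergence of $\Lambda_n$. The paper phrases this in one line by invoking the duality between $\c(\setm)$ and $\meas(\setm)$, while you spell out the standard triangle-inequality decomposition that underlies that step; your version is also slightly more careful in tracking the constant $R$ rather than $M$, since the $\nu_n$ are not assumed to lie in $\setm$.
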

	
	\begin{proof}
		As in \eqref{eq:HKUniformConv}, the sequence of functions $(\mu \mapsto \HK^2_\kappa(\mu,\nu_n))_n$ converges uniformly to $(\mu \mapsto \HK^2_\kappa(\mu,\nu))$ in $\c(\setm)$. This, together with weak* convergence of $\Lambda_n$ to $\Lambda$, leveraging duality between $\c(\setm)$ and $\measp(\setm)$, leads to
		\begin{align*}
			\lim_{n \to \infty} J_{\Lambda_n,\kappa}(\nu_n)
			=
			\lim_{n \to \infty} \int_{\setm} \HK_\kappa^2(\mu,\nu_n)\,\diff\Lambda_n(\mu)
			= \int_{\setm} \HK_\kappa^2(\mu,\nu)\,\diff\Lambda(\mu)
			= J_{\Lambda,\kappa}(\nu),
		\end{align*}
		which provides \eqref{eq:limitenergy}.
	\end{proof}
	
	\begin{corollary}[Convergence of minimizers]
		\label{cor:minimizersconvergence}
		Fix $\kappa \in (0, \infty)$. Let $(\Lambda_n)_{n \in \N}$ be a weak* convergent sequence in $\prob(\setm)$ with limit $\Lambda \in \prob(\setm)$ and, for each $n$, let $\nu_n$ be a minimizer of $(\mathcal{P}_{\Lambda_n, \kappa})$. Then, the sequence $(\nu_n)_{n \in \N}$ is weak* pre-compact and each cluster point $\nu \in \setm$ is a minimizer of \eqref{eq:primal}.
	\end{corollary}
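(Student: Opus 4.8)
The plan is to combine the compactness of $(\setm,\HK_\kappa)$ with the joint stability statement of Proposition \ref{prop:stability}. First I would observe that any minimizer of $(\mathcal{P}_{\Lambda_n,\kappa})$ necessarily lies in $\setm$: this is exactly the mass-rescaling argument from the first part of the proof of Proposition \ref{prop:primalexistence}, where \eqref{eq:rescale} shows that replacing a competitor of mass $>M$ by its rescaling to mass $M$ strictly decreases $J_{\Lambda_n,\kappa}$. Consequently the entire sequence $(\nu_n)_n$ is contained in the weak* compact set $\setm$ (recall $\setm$ is weak* closed and bounded, hence weak* compact, and weak* convergence there is metrized by $\HK_\kappa$), which gives the claimed weak* pre-compactness at once.

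For the identification of cluster points, let $\nu \in \setm$ be such that $\nu_{n_k} \weakstar \nu$ along a subsequence $(n_k)_k$, equivalently $\HK_\kappa(\nu_{n_k},\nu)\to 0$. Fix an arbitrary competitor $\tilde\nu \in \measp(\Omega)$. Minimality of $\nu_{n_k}$ for $(\mathcal{P}_{\Lambda_{n_k},\kappa})$ gives $J_{\Lambda_{n_k},\kappa}(\nu_{n_k}) \le J_{\Lambda_{n_k},\kappa}(\tilde\nu)$. Letting $k\to\infty$, the left-hand side tends to $J_{\Lambda,\kappa}(\nu)$ by Proposition \ref{prop:stability} applied to the sequences $(\Lambda_{n_k})_k$ and $(\nu_{n_k})_k$, while the right-hand side tends to $J_{\Lambda,\kappa}(\tilde\nu)$ by the same proposition applied to $(\Lambda_{n_k})_k$ together with the constant sequence $\nu_n \equiv \tilde\nu$. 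Hence $J_{\Lambda,\kappa}(\nu) \le J_{\Lambda,\kappa}(\tilde\nu)$, and since $\tilde\nu$ was arbitrary, $\nu$ is a minimizer of \eqref{eq:primal}.

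There is no genuine obstacle here once Proposition \ref{prop:stability} is available; the only point deserving a word of care is the reduction to $\nu_n \in \setm$, which one must invoke in order to have weak* pre-compactness and to apply the $\setm$-valued stability statement. This is already covered by the argument in Proposition \ref{prop:primalexistence} and need not be reproduced in detail.
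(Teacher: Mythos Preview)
Your proposal is correct and follows essentially the same route as the paper: invoke Proposition~\ref{prop:primalexistence} to place all $\nu_n$ in the compact set $\setm$, then pass to a cluster point and apply Proposition~\ref{prop:stability} twice (once to the varying sequence $(\nu_{n_k})_k$, once to the constant sequence $\tilde\nu$) to transfer the minimality inequality to the limit. One small remark: Proposition~\ref{prop:stability} as stated does not require the sequence $(\nu_n)_n$ to lie in $\setm$, so the membership $\nu_n\in\setm$ is needed only for pre-compactness, not for invoking the stability result.
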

	
	\begin{proof}
		By Proposition \ref{prop:primalexistence}, the sequence of minimizers $(\nu_n)_{n \in \N}$ lies entirely in $\setm$, hence by compactness of $(\setm, \HK_\kappa)$ it is weak* pre-compact.
		Fix now any weak* cluster point $\nu$ of $(\nu_n)_n$ and a corresponding subsequence $(\nu_{n'})_{n'}$ such that $\nu_{n'} \weakstar \nu$ as $n' \to \infty$. Fix any $\tilde{\nu} \in \measp(\Omega)$. Minimality of each $\nu_n$ for $(\mathcal{P}_{\Lambda_n, \kappa})$ and a double application of Proposition \ref{prop:stability} provide
		\[
		J_{\Lambda,\kappa}(\nu) = \lim_{n' \to \infty} J_{\Lambda_{n'},\kappa}(\nu_{n'}) \leq \lim_{n' \to \infty} J_{\Lambda_{n'},\kappa}(\tilde{\nu}) = J_{\Lambda,\kappa}(\tilde{\nu}),
		\]
		which proves minimality of $\nu$ for \eqref{eq:primal}.
	\end{proof}
	
	\subsection{Scaling limits for the metric}
	\label{sec:Scaling}
	Now let us look at the limit problems as we send $\kappa \to 0$ and $\kappa \to \infty$ respectively.
	Based on Theorem~\ref{thm:HKScaling} we expect to recover the pure Hellinger and pure Wasserstein barycenter problems (after suitable re-scaling). The expected limit functionals are therefore:
	\begin{align}
		J_{\Lambda,0}(\nu) &
		\assign \int_\setm \textup{Hell}^2(\mu, \nu)\,\diff \Lambda(\mu), &
		J_{\Lambda,\infty}(\nu) &
		\assign \int_\setm W^2(\mu, \nu)\,\diff \Lambda(\mu).
	\end{align}
	In particular, we obtain as a by-product the existence of minimizers for such limiting barycenter problems.
	
	\begin{proposition}\label{prop:scalingLimits}
		Let $\Lambda \in \prob(\setm)$, let $(\kappa_n)_n$ be a sequence in $(0,\infty)$ with $\lim_n \kappa_n = \kappa_\infty \in [0,\infty) \cup \{\infty\}$. For each $n$, let $\nu_n \in \setm$ be a minimizer of $J_{\Lambda,\kappa_n}$. Then, the sequence $(\nu_n)_n$ is weak* pre-compact and each cluster point $\nu_\infty \in \setm$ is a minimizer of $J_{\Lambda,\kappa_\infty}$. Furthermore,
		\begin{align}
			\label{eq:energylimitfinite}
			J_{\Lambda,\kappa_\infty}(\nu_\infty) & = \lim_{n \to \infty} J_{\Lambda,\kappa_n}(\nu_n) \quad \text{if} \quad \kappa_\infty \in [0,\infty)
			\intertext{and}
			\label{eq:energylimitinfinity}
			J_{\Lambda,\infty}(\nu_\infty) & = \lim_{n \to \infty} \kappa_n^2 J_{\Lambda,\kappa_n}(\nu_n) \quad \text{if} \quad \kappa_\infty = \infty.
		\end{align}
	\end{proposition}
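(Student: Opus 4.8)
The plan is to combine the three monotonicity/continuity facts from Theorem \ref{thm:HKScaling} with the $\Gamma$-convergence-type machinery already used in Propositions \ref{prop:primalexistence} and \ref{prop:stability}. First I would treat the case $\kappa_\infty \in [0,\infty)$. Since each $\nu_n \in \setm$, weak* pre-compactness of $(\nu_n)_n$ is immediate from compactness of $\setm$; fix a cluster point $\nu_\infty$ and a subsequence $\nu_{n'} \weakstar \nu_\infty$, equivalently $\HK_\kappa(\nu_{n'},\nu_\infty)\to 0$ for any fixed reference scale $\kappa$. The key quantitative input is a uniform-in-$\mu$ estimate: I want to show that $\mu \mapsto \HK_{\kappa_{n'}}^2(\mu,\nu_{n'})$ converges uniformly on $\setm$ to $\mu \mapsto \HK_{\kappa_\infty}^2(\mu,\nu_\infty)$ (with the convention $\HK_0^2 = \Hell^2$). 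This splits into a spatial part and a scale part: $|\HK_{\kappa_{n'}}^2(\mu,\nu_{n'}) - \HK_{\kappa_{n'}}^2(\mu,\nu_\infty)|$ is controlled exactly as in \eqref{eq:HKUniformConv} using \eqref{eq:bounds} and the triangle inequality, uniformly over $\mu$, since the bound $2\sqrt{2M}\,\HK_{\kappa_{n'}}(\nu_{n'},\nu_\infty)$ does not depend on $\mu$ — though here one must check the reference-scale convergence $\HK_{\kappa_{n'}}(\nu_{n'},\nu_\infty)\to 0$, which follows by combining $\HK_{\kappa_{n'}}(\nu_{n'},\nu_\infty) \leq \Hell(\nu_{n'},\nu_\infty)$ (from monotonicity as $\kappa\downarrow$, so $\HK_\kappa \le \Hell$ for all $\kappa$) with $\Hell(\nu_{n'},\nu_\infty)\to 0$ once one knows Hellinger convergence of the subsequence; alternatively one fixes a single reference scale once the spatial convergence of $\nu_{n'}$ has been upgraded. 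The remaining part $|\HK_{\kappa_{n'}}^2(\mu,\nu_\infty) - \HK_{\kappa_\infty}^2(\mu,\nu_\infty)|$ is handled by the continuity statement in Theorem \ref{thm:HKScaling} together with a Dini-type argument: for $\kappa_\infty>0$, $\kappa \mapsto \HK_\kappa^2(\mu,\nu_\infty)$ is continuous and monotone, and monotone pointwise convergence of continuous functions on the compact set $\setm$ to a continuous limit is uniform; for $\kappa_\infty = 0$ one uses $\HK_{\kappa_{n'}}^2(\mu,\nu_\infty)\nearrow\Hell^2(\mu,\nu_\infty)$ from \eqref{eq:Helllimit}, again a monotone increasing sequence of continuous functions (continuity in $\mu$ of $\mu\mapsto\HK_\kappa^2(\mu,\nu_\infty)$ following from the uniform estimate \eqref{eq:HKUniformConv} applied in the first slot, which needs $\HK_\kappa$ to metrize weak* — true for $\kappa>0$, and $\Hell$ is lower semicontinuous which suffices for the $\liminf$ direction). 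One then integrates against $\Lambda$: the $\leq$ direction of \eqref{eq:energylimitfinite} follows from uniform convergence of the integrands (so the limit of the energies equals $J_{\Lambda,\kappa_\infty}(\nu_\infty)$ along the subsequence), and for any competitor $\tilde\nu$ one has $J_{\Lambda,\kappa_{n'}}(\nu_{n'}) \le J_{\Lambda,\kappa_{n'}}(\tilde\nu) \to J_{\Lambda,\kappa_\infty}(\tilde\nu)$ using Theorem \ref{thm:HKScaling} and dominated convergence (dominating by $\|\mu\|+\|\tilde\nu\|$ via \eqref{eq:bounds}, noting $\tilde\nu$ may be taken in $\setm$), giving minimality of $\nu_\infty$; the two together pin down the energy limit for the whole sequence since every subsequence has a further subsequence with the same limit value $\inf J_{\Lambda,\kappa_\infty}$.

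For the case $\kappa_\infty = \infty$ I would run the same argument for the rescaled quantities, using that $\kappa \mapsto \kappa^2\HK_\kappa^2(\mu,\nu)$ is \emph{non-decreasing} with $\kappa_n^2\HK_{\kappa_n}^2(\mu,\nu)\nearrow W^2(\mu,\nu)$ by \eqref{eq:W2limit}. Here the spatial convergence step is slightly more delicate because $W^2$ can be $+\infty$ when masses differ, so the clean product-formula trick of \eqref{eq:HKUniformConv} is not available; instead I would work with $\kappa_n^2 \HK_{\kappa_n}^2$ directly, bounding $|\kappa_n^2\HK_{\kappa_n}^2(\mu,\nu_n) - \kappa_n^2\HK_{\kappa_n}^2(\mu,\nu_\infty)|$ via the triangle inequality for the metric $\kappa_n\HK_{\kappa_n}$ and the a priori bound $\kappa_n^2\HK_{\kappa_n}^2 \le \kappa_n^2(\|\mu\|+\|\nu\|)$ — but since $\kappa_n\to\infty$ this bound degenerates, so the more robust route is: (i) establish pre-compactness and extract $\nu_{n'}\weakstar\nu_\infty$ using a fixed reference scale as before; (ii) for the $\liminf$ inequality $J_{\Lambda,\infty}(\nu_\infty) \le \liminf \kappa_{n'}^2 J_{\Lambda,\kappa_{n'}}(\nu_{n'})$, use joint lower semicontinuity of $(\mu,\nu)\mapsto W^2(\mu,\nu)$ together with the pointwise monotone bound $\kappa^2\HK_\kappa^2(\mu,\nu) \le W^2(\mu,\nu)$ and, for the convergence $\kappa_{n'}^2\HK_{\kappa_{n'}}^2(\mu,\nu_{n'})\to W^2(\mu,\nu_\infty)$ for $\Lambda$-a.e.\ $\mu$, combine the scale limit with stability of $W^2$ under weak* convergence of the second argument (standard when masses match; and if $\|\nu_\infty\|$ differs from $\|\mu\|$ both sides are $+\infty$), then apply Fatou; (iii) for the matching $\limsup$, fix a competitor $\tilde\nu\in\measp(\Omega)$, use $\kappa_{n'}^2 J_{\Lambda,\kappa_{n'}}(\nu_{n'}) \le \kappa_{n'}^2 J_{\Lambda,\kappa_{n'}}(\tilde\nu) \to J_{\Lambda,\infty}(\tilde\nu)$ by monotone convergence \eqref{eq:W2limit} and the monotone convergence theorem; choosing $\tilde\nu = \nu_\infty$ and comparing with (ii) yields both minimality of $\nu_\infty$ and the energy identity \eqref{eq:energylimitinfinity} for the full sequence.

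The main obstacle is the uniformity/semicontinuity bookkeeping at the endpoints $\kappa_\infty \in \{0,\infty\}$, where $\HK_{\kappa_\infty}$ is no longer a weak*-metrizing metric on $\setm$: for $\kappa_\infty=0$ the Hellinger distance is only lower semicontinuous (not continuous) in weak* convergence and the functional $J_{\Lambda,0}$ need not be weak* continuous, so one can only get the $\liminf$ half directly and must produce the reverse inequality purely from the recovery-sequence / competitor argument; for $\kappa_\infty=\infty$ the degeneration of the uniform bound \eqref{eq:bounds} after multiplication by $\kappa_n^2$ forces one to abandon the uniform-convergence-of-integrands shortcut and argue via Fatou plus monotone convergence instead. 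Everything else — pre-compactness, extraction of subsequences, and the passage from ``every cluster point is a minimizer with the right energy'' to ``the whole energy sequence converges'' — is routine and mirrors the proofs of Proposition \ref{prop:stability} and Corollary \ref{cor:minimizersconvergence} essentially verbatim.
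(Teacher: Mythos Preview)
Your overall architecture (pre-compactness from $\setm$; a $\liminf$ inequality for the energies along a convergent subsequence; a $\limsup$ inequality via a fixed competitor $\tilde\nu$; then the subsequence-of-subsequence trick) is the same as the paper's. The substantive difference, and the place where your argument has a genuine gap, is the ``spatial part'' at the endpoint $\kappa_\infty = 0$.

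You propose to control $|\HK_{\kappa_{n'}}^2(\mu,\nu_{n'}) - \HK_{\kappa_{n'}}^2(\mu,\nu_\infty)|$ by $2\sqrt{2M}\,\HK_{\kappa_{n'}}(\nu_{n'},\nu_\infty)$ and then argue that the latter tends to zero. For $\kappa_\infty\in(0,\infty)$ this is fine (bound $\HK_{\kappa_{n'}}\le \HK_{\kappa_0}$ for any lower bound $\kappa_0>0$ of the sequence). But for $\kappa_\infty=0$ it fails: weak* convergence $\nu_{n'}\weakstar\nu_\infty$ does \emph{not} imply $\HK_{\kappa_{n'}}(\nu_{n'},\nu_\infty)\to 0$, and certainly not $\Hell(\nu_{n'},\nu_\infty)\to 0$ as you suggest. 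A concrete counterexample: take $\nu_{n'}=\delta_{\kappa_{n'}}$ and $\nu_\infty=\delta_0$; then $\nu_{n'}\weakstar\nu_\infty$ but $\HK_{\kappa_{n'}}^2(\delta_{\kappa_{n'}},\delta_0)=2-2\cos(1)$ for every $n'$. So you cannot first swap $\nu_{n'}$ for $\nu_\infty$ and then pass in the scale; the two limits interact. Your Dini argument for the ``scale part'' likewise breaks down here because the limit function $\mu\mapsto\Hell^2(\mu,\nu_\infty)$ is not weak* continuous, as you yourself note.

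The paper sidesteps this by never working at the degenerate scale. After extracting a \emph{monotone} subsequence in $\kappa$, it freezes an intermediate index $m$ and uses the monotonicity $J_{\Lambda,\kappa_m}(\nu_n)\le J_{\Lambda,\kappa_n}(\nu_n)$ for $n>m$ (when $\kappa_n\searrow\kappa_\infty$; and the $\kappa^2$-scaled inequality when $\kappa_n\nearrow$). Since $\kappa_m\in(0,\infty)$ is fixed, Proposition~\ref{prop:stability} gives $J_{\Lambda,\kappa_m}(\nu_n)\to J_{\Lambda,\kappa_m}(\nu_\infty)$ as $n\to\infty$, yielding $J_{\Lambda,\kappa_m}(\nu_\infty)\le\liminf_n J_{\Lambda,\kappa_n}(\nu_n)$. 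Only then does one send $m\to\infty$ via monotone convergence (which works even for $\kappa_\infty=0$ and $\kappa_\infty=\infty$). In effect the paper does the ``spatial'' limit at a \emph{fixed} positive scale, where everything is continuous, and delegates the ``scale'' limit to monotone convergence with $\nu_\infty$ already fixed. The same trick handles your acknowledged difficulty at $\kappa_\infty=\infty$ (where the bound \eqref{eq:bounds} degenerates after rescaling) without needing any direct joint convergence statement for $\kappa_{n'}^2\HK_{\kappa_{n'}}^2(\mu,\nu_{n'})$.
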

	
	\begin{proof}
		By Proposition \ref{prop:primalexistence} the sequence $(\nu_n)_n$ lies in $\setm$, thus has uniformly bounded mass and thus is weak* pre-compact.
		Let us assume for now that the sequence $(\kappa_n)_n$ is monotone and that the corresponding sequence $(\nu_n)_n$ converges weak* to $\nu_\infty \in \setm$.

		\medskip
		\noindent \emph{Step 1.1 ($\kappa_n \nearrow \kappa_\infty$)}
		\smallskip
		
		\noindent
		Assume the sequence $(\kappa_n)_n$ is non-decreasing and converging to $\kappa_\infty \in (0,\infty)$, and assume the corresponding sequence $(\nu_n)_n$ converges weak* to $\nu_\infty \in \setm$. By Theorem \ref{thm:HKScaling} the function $(0,\infty) \ni \kappa \mapsto \kappa^2 \cdot \HK_\kappa^2(\mu,\nu)$ is non-decreasing for all $\mu,\nu \in \setm(\Omega)$ and $\lim_{n \to \infty} \kappa_n^2\,\HK_{\kappa_n}^2(\mu,\nu)\allowbreak \nearrow \kappa_\infty^2\,\HK_{\kappa_\infty}^2(\mu,\nu)$. Therefore, for each $\nu \in \setm$, the function $(0,\infty) \ni \kappa \mapsto \kappa^2\,J_{\Lambda,\kappa}(\nu)$ is non-decreasing, so that
		\begin{equation}\label{eq:monotonicityJ}
			\kappa_m^2\,J_{\Lambda,\kappa_m}(\nu) \leq \kappa_n^2\,J_{\Lambda,\kappa_n}(\nu) \quad \text{for all }n > m > 0, \text{ and all } \nu \in \measp(\Omega),
		\end{equation}
		and by monotone convergence
		\begin{equation}\label{eq:energyConvergenceFixednu}
			\kappa_n^2\,J_{\Lambda,\kappa_n}(\nu) \nearrow \kappa_\infty^2 J_{\Lambda,\infty}(\nu) \text{ as } n \to \infty, \text{ for all } \nu \in \measp(\Omega).
		\end{equation}
		Let us fix $m \in \N$. Thanks to \eqref{eq:monotonicityJ}, applied with $\nu = \nu_n$, we find
		\[
		\kappa_m^2\,J_{\Lambda,\kappa_m}(\nu_n) \leq \kappa_n^2\,J_{\Lambda,\kappa_n}(\nu_n) \quad \text{for all $n > m$},
		\]
		so that, passing to the limit as $n \to \infty$, we obtain
		\[
		\kappa_m^2\,J_{\Lambda,\kappa_m}(\nu_\infty) \overset{\eqref{eq:limitenergy}}{=} \kappa_m^2 \lim_{n \to \infty} J_{\Lambda,\kappa_m}(\nu_n) \leq \liminf_{n \to \infty} \kappa_n^2\,J_{\Lambda,\kappa_n}(\nu_n).
		\]
		Passing now to the limit as $m \to \infty$ we conclude
		\begin{equation}\label{eq:limsup}
			\kappa_\infty^2 J_{\Lambda,\kappa_\infty}(\nu_\infty) \overset{\eqref{eq:energyConvergenceFixednu}}{=} \lim_{m \to \infty} \kappa_m^2\,J_{\Lambda,\kappa_m}(\nu_\infty) \leq \liminf_{n \to \infty} \kappa_n^2\,J_{\Lambda,\kappa_n}(\nu_n).
		\end{equation}
		On the other hand, using \eqref{eq:energyConvergenceFixednu} with $\nu = \nu_\infty$ and leveraging minimality of each $\nu_n$, one has
		\[
		\kappa_\infty^2 J_{\Lambda,\kappa_\infty}(\nu_\infty) \geq \kappa_n^2\,J_{\Lambda,\kappa_n}(\nu_\infty) \geq \kappa_n^2\,J_{\Lambda,\kappa_n}(\nu_n) \quad \text{for every }n,
		\]
		and a passage to the limit as $n \to \infty$ directly provides
		\begin{equation}\label{eq:liminf}
			\kappa_\infty^2 J_{\Lambda,\kappa_\infty}(\nu_\infty) \geq \limsup_{n \to \infty} \kappa_n^2\,J_{\Lambda,\kappa_n}(\nu_n).
		\end{equation}
		Combining \eqref{eq:liminf} and \eqref{eq:limsup} provides \eqref{eq:energylimitfinite} for the particular class of sequences under consideration. Assume now $\nu_\infty$ were not optimal for $J_{\Lambda,\kappa_\infty}$, i.e. there exists some $\nu_\infty'$ with a strictly better score. By minimality of each $\nu_n$, we have $J_{\Lambda,\kappa_n}(\nu_n) \leq J_{\Lambda,\kappa_n}(\nu_\infty')$ so that, passing to the limit one has
		\[
		\lim_{n \to \infty} J_{\Lambda,\kappa_n}(\nu_n) \leq \lim_{n \to \infty} J_{\Lambda,\kappa_n}(\nu_\infty') \overset{\eqref{eq:energyConvergenceFixednu}}{=} J_{\Lambda,\kappa_\infty}(\nu_\infty') < J_{\Lambda,\kappa_\infty}(\nu_\infty) \overset{\eqref{eq:energylimitfinite}}{=} \lim_{n \to \infty} J_{\Lambda,\kappa_n}(\nu_n),
		\]
		hence the sought for contradiction. The same argument applies if $\kappa_n \nearrow \infty$, taking into account that $\lim_{n \to \infty} \kappa_n^2\,\HK_{\kappa_n}^2(\mu,\nu)\allowbreak \nearrow W^2(\mu,\nu)$ for all $\mu,\nu \in \setm(\Omega)$.
		
		\medskip\noindent
		\emph{Step 1.2 ($\kappa_n \searrow \kappa_\infty$)}
		
		\smallskip
		\noindent
		If we assume instead that the sequence $(\kappa_n)_n$ is non-increasing and $\kappa_n \searrow \kappa_\infty \in [0,\infty)$, a completely symmetric argument as in Step 1.1 can be applied after dropping the scaling factors $\kappa_n^2$. Indeed, by Theorem~\ref{thm:HKScaling} the function $(0,\infty) \ni \kappa \mapsto \HK_\kappa^2(\mu,\nu)$ is non-increasing for all $\mu,\nu \in \setm(\Omega)$ and so $\lim_{n \to \infty} \HK_{\kappa_n}^2(\mu,\nu)\allowbreak \nearrow \HK_{\kappa_\infty}^2(\mu,\nu)$ (with limit $\Hell^2(\mu,\nu)$ if $\kappa_\infty = 0$). Thus, the same monotonicity arguments apply.
		
		\medskip\noindent
		\emph{Step 2}.
		Assume now $\nu_\infty \in \setm$ is any cluster point of $(\nu_n)_n$. Hence, there exists a subsequence $(\nu_{n'})_{n'}$ such that $\nu_{n'} \weakstar \nu_\infty$ as $n' \to \infty$. We can extract an additional subsequence such that $(\kappa_{n''})_{n''}$ is either non-increasing or non-decreasing. Step 1 then provides minimality of $\nu_\infty$ for $(\mathcal{P}_{\Lambda, \kappa_\infty})$.
		
		We are left to prove that the sequence of energies $(J_{\Lambda,\kappa_{n}}(\nu_{n}))_{n}$ converges as a whole. Consider any subsequence $(J_{\Lambda,\kappa_{n'}}(\nu_{n'}))_{n'}$. By pre-compactness of the corresponding sequence $(\nu_{n'})_{n'}$, we can identify a further subsequence such that $(\nu_{n''})_{n''}$ converges weak* to some cluster point $\nu_\infty \in \setm$. and in turn extract an additional subsequence such that $(\kappa_{n'''})_{n'''}$ is either non-increasing or non-decreasing. By Step~1 we have
		\[
		\lim_{n''' \to \infty} J_{\Lambda,n'''}(\nu_{n'''}) = (\mathcal{P}_{\Lambda, \kappa_\infty}).
		\]
		Hence, every subsequence of $(J_{\Lambda,\kappa_n}(\nu_n))_n$ admits a converging subsequence to the same limit $(\mathcal{P}_{\Lambda, \kappa_\infty})$. This provides convergence of the full sequence to $(\mathcal{P}_{\Lambda, \kappa_\infty})$ and proves \eqref{eq:energylimitfinite} for the whole sequence of minimizing energies.
	\end{proof}
	
	For $\kappa \in (0,\infty]$ it is known that barycenters are not necessarily unique (see, e.g., \cite[Section 6]{friesecke2019barycenters}), hence there may be multiple corresponding cluster points $\nu_\infty$ in the above result. We now show that for $\kappa=0$ uniqueness holds in general.
	
	\begin{corollary}[Convergence of minimizers for $\kappa \to 0$]\label{cor:zeroconvergence}
		Let $\Lambda \in \prob(\setm)$ and for each $\kappa > 0$ let $\nu_\kappa \in \setm$ be a minimizer of $J_{\Lambda,\kappa}$. Then, there exists some $\nu_0 \in \setm$ such that $\nu_k \weakstar \nu_0$ as $\kappa \to 0$. In particular, $\nu_0$ is the unique minimizer of $J_{\Lambda,0}$.
	\end{corollary}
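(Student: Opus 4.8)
The plan is to deduce the statement from Proposition~\ref{prop:scalingLimits} (applied with $\kappa_\infty=0$) together with a uniqueness result for the limiting functional. Concretely, I would first establish that $J_{\Lambda,0}$ admits a \emph{unique} minimizer $\nu_0\in\setm$; granting this, the conclusion is routine. Since $(\setm,\weakstar)=(\setm,\HK_\kappa)$ is compact and metrizable (Theorem~\ref{thm:HKBasic}), it suffices to show $\nu_{\kappa_n}\weakstar\nu_0$ for every sequence $\kappa_n\to 0$ in $(0,\infty)$. For such a sequence the minimizers $\nu_{\kappa_n}$ lie in $\setm$, hence form a weak* pre-compact sequence, and Proposition~\ref{prop:scalingLimits} guarantees that every weak* cluster point is a minimizer of $J_{\Lambda,0}$ and thus equals $\nu_0$; a sequence in a compact metric space with a single cluster point converges to it. (Existence of a minimizer of $J_{\Lambda,0}$ is itself part of Proposition~\ref{prop:scalingLimits}.) So the entire difficulty is the uniqueness claim.

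For uniqueness I would rewrite $J_{\Lambda,0}$ through the Hellinger affinity. For $\mu,\nu\in\measp(\Omega)$ and any $\sigma\in\measp(\Omega)$ with $\mu,\nu\ll\sigma$, set $A(\mu,\nu)\assign\int_\Omega\sqrt{\RadNik{\mu}{\sigma}\,\RadNik{\nu}{\sigma}}\,\diff\sigma$; by positive $1$-homogeneity of $(s,t)\mapsto\sqrt{st}$ this is independent of $\sigma$, satisfies $\Hell^2(\mu,\nu)=\|\mu\|+\|\nu\|-2A(\mu,\nu)$, and — crucially — if $\nu\ll\sigma$ then $A(\mu,\nu)=A(\mu^{\mathrm{ac}},\nu)$, where $\mu^{\mathrm{ac}}$ denotes the $\sigma$-absolutely continuous part of $\mu$ (the singular part of $\mu$ is carried by a $\sigma$-null set, on which $\nu$ has zero density). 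Since $\mu\mapsto A(\mu,\nu)=\tfrac12\big(\|\mu\|+\|\nu\|-\Hell^2(\mu,\nu)\big)$ is a pointwise limit of weak*-continuous functions (using $\Hell^2(\mu,\nu)=\lim_{\kappa\to 0}\HK_\kappa^2(\mu,\nu)$ from Theorems~\ref{thm:HKBasic} and~\ref{thm:HKScaling}) and is bounded by $\sqrt{\|\mu\|\,\|\nu\|}$, it is Borel and $\Lambda$-integrable, and
\[
J_{\Lambda,0}(\nu)=\int_\setm\|\mu\|\,\diff\Lambda(\mu)+\|\nu\|-2\int_\setm A(\mu,\nu)\,\diff\Lambda(\mu).
\]
Each $\nu\mapsto A(\mu,\nu)$ is concave, so $J_{\Lambda,0}$ is convex and its set of minimizers is convex and non-empty.

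Then I would take two minimizers $\nu_0,\nu_1$ and consider the midpoint $\nu_{1/2}\assign\tfrac12(\nu_0+\nu_1)\in\setm$. By convexity and minimality it is again a minimizer, and since the mass term is affine this forces equality in the concavity of the affinity term; as the pointwise inequality $A(\mu,\nu_{1/2})\ge\tfrac12 A(\mu,\nu_0)+\tfrac12 A(\mu,\nu_1)$ always holds, it must be an equality for $\Lambda$-a.e.\ $\mu$. Taking the fixed reference $\sigma\assign\nu_0+\nu_1$, writing $f_i\assign\RadNik{\nu_i}{\sigma}$ (so $f_0+f_1=1$ $\sigma$-a.e.) and $g_\mu$ for the density of the $\sigma$-absolutely continuous part of $\mu$, this identity reads $\int_\Omega\sqrt{g_\mu}\big(\sqrt{\tfrac12}-\tfrac12\sqrt{f_0}-\tfrac12\sqrt{f_1}\big)\,\diff\sigma=0$, with non-negative integrand (concavity of $\sqrt{\cdot}$ with $f_0+f_1=1$); hence $g_\mu=0$ $\sigma$-a.e.\ on $D\assign\{f_0\neq f_1\}$, for $\Lambda$-a.e.\ $\mu$. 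Suppose now $\nu_0\neq\nu_1$, i.e.\ $\sigma(D)>0$; swapping $\nu_0,\nu_1$ if necessary we may assume $\sigma(D^+)>0$, where $D^+\assign\{f_0>f_1\}$, on which $f_0>\tfrac12$. Set $\tilde\nu_0(B)\assign\nu_0(B\setminus D^+)$. Then $\|\tilde\nu_0\|=\|\nu_0\|-\nu_0(D^+)$ with $\nu_0(D^+)=\int_{D^+}f_0\,\diff\sigma>\tfrac12\sigma(D^+)>0$, while $A(\mu,\tilde\nu_0)=A(\mu,\nu_0)$ for $\Lambda$-a.e.\ $\mu$ since the two differ by $\int_{D^+}\sqrt{g_\mu f_0}\,\diff\sigma=0$. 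Therefore $J_{\Lambda,0}(\tilde\nu_0)=J_{\Lambda,0}(\nu_0)-\nu_0(D^+)<J_{\Lambda,0}(\nu_0)$, contradicting minimality of $\nu_0$; so $\nu_0=\nu_1$.

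The main obstacle I anticipate is the measure-theoretic bookkeeping: $\Lambda$ may be supported on an uncountable family of measures with no common dominating measure, so one cannot simultaneously represent all $\mu$ by densities. The argument above circumvents this by running everything against the single reference $\sigma=\nu_0+\nu_1$ and using that $A(\mu,\nu_i)$ depends only on the $\sigma$-absolutely continuous part of $\mu$; the only residual care is the Borel measurability of $\mu\mapsto A(\mu,\nu)$ needed for $J_{\Lambda,0}$ to be well defined. An alternative route, which additionally produces the explicit minimizer, is to first show that any minimizer is absolutely continuous with respect to $\bar\mu\assign\int_\setm\mu\,\diff\Lambda(\mu)$ (a singular part of $\nu$ merely adds its mass to $J_{\Lambda,0}$) and then minimize the resulting pointwise integrand to obtain $\RadNik{\nu_0}{\bar\mu}=\big(\int_\setm\sqrt{\RadNik{\mu^{\mathrm{ac}}}{\bar\mu}}\,\diff\Lambda(\mu)\big)^2$, which is unique by strict convexity in $\sqrt{\RadNik{\nu}{\bar\mu}}$.
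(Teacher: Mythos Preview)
Your proposal is correct. The global architecture---prove uniqueness of the minimizer of $J_{\Lambda,0}$, then invoke Proposition~\ref{prop:scalingLimits} so that every cluster point of $(\nu_{\kappa_n})_n$ equals this unique minimizer---is exactly what the paper does, and your reduction from $\kappa\to 0$ to sequences via compactness of $\setm$ is the same.

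Where you diverge is in the uniqueness argument. The paper exploits the Hellinger geometry directly: given two minimizers $\nu_0,\nu_0'$ with densities $v_0,v_0'$ with respect to a common dominating measure, it forms the \emph{Hellinger-geodesic} midpoint $\bar\nu$ with density $\big(\tfrac12\sqrt{v_0}+\tfrac12\sqrt{v_0'}\big)^2$ and checks in one line, via strict convexity of $t\mapsto t^2$, that $\Hell^2(\bar\nu,\mu)<\tfrac12\Hell^2(\nu_0,\mu)+\tfrac12\Hell^2(\nu_0',\mu)$ for \emph{every} $\mu$, whence $J_{\Lambda,0}(\bar\nu)$ is strictly smaller. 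You instead take the \emph{arithmetic} midpoint $\tfrac12(\nu_0+\nu_1)$, for which $J_{\Lambda,0}$ is only (non-strictly) convex; you then analyse the equality case to deduce that the absolutely continuous part of $\Lambda$-a.e.\ $\mu$ vanishes on $\{f_0\neq f_1\}$, and finish with a mass-removal competitor. Both are valid; the paper's route is shorter because the square-root interpolation is tailored to the Hellinger structure, while yours is a bit more hands-on but has the merit of making explicit the role of the $\sigma$-absolutely continuous part of $\mu$ (which the paper's choice of a $\mu$-dependent dominating measure $\tau_\mu$ handles implicitly) and of addressing the Borel measurability of $\mu\mapsto A(\mu,\nu)$, a point the paper leaves tacit. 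Your closing remark about the explicit formula for $\nu_0$ in terms of $\bar\mu=\int_\setm\mu\,\diff\Lambda(\mu)$ is a nice bonus not present in the paper.
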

	
	\begin{proof}
		By Proposition \ref{prop:scalingLimits}, there exists a minimizer $\nu_0 \in \measp(\Omega)$ of $J_{\Lambda,0}$. Such a minimizer is indeed unique: assume this were not the case, so that there exists a second minimizer $\nu_0' \in \measp(\Omega)$. Fix any $\tau \in \measp(\Omega)$ such that $\nu_0, \nu_0' \ll \tau$ and define $v_0 = \diff \nu_0/\diff \tau$ and $v_0' = \diff \nu_0'/\diff \tau$.  Let $\bar{v} = \left( \tfrac{1}{2}\sqrt{v_0} + \tfrac12 \sqrt{v_0'}\right)^2$ and define $\bar{\nu} = \bar{v}\tau$ (note that this definition does not depend on the choice of $\tau$ by positive 1-homogeneity). For any $\mu \in \setm$, fix any $\tau_\mu \in \measp(\Omega)$ such that $\tau, \mu \ll \tau_\mu$ and, by strict convexity of $x \mapsto x^2$, compute
		\begin{align*}
			\Hell^2(\bar{\nu}, \mu) &= \int_\Omega \left( \sqrt{\frac{\diff \bar{\nu}}{\diff \tau_\mu}} - \sqrt{\frac{\diff \mu}{\diff \tau_\mu}} \right)^2 \,\diff\tau_\mu \\
			&= \int_\Omega \left( \frac{1}{2}\left(\sqrt{v_0\frac{\diff \tau}{\diff \tau_\mu}} - \sqrt{\frac{\diff \mu}{\diff \tau_\mu}}\right) +\frac{1}{2}\left(\sqrt{v_0' \frac{\diff \tau}{\diff \tau_\mu}} - \sqrt{\frac{\diff \mu}{\diff \tau_\mu}}\right) \right)^2 \,\diff\tau_\mu \\
			&< \frac{1}{2} \int_\Omega \left( \sqrt{v_0\frac{\diff \tau}{\diff \tau_\mu}} - \sqrt{\frac{\diff \mu}{\diff \tau_\mu}}\right)^2\,\diff\tau_\mu + \frac{1}{2} \int_\Omega \left(\sqrt{v_0' \frac{\diff \tau}{\diff \tau_\mu}} - \sqrt{\frac{\diff \mu}{\diff \tau_\mu}} \right)^2 \,\diff\tau_\mu \\
			&= \frac12 ( \Hell^2(\nu_0, \mu) + \Hell^2(\nu_0', \mu)).
		\end{align*}
		An integration in $\mu$ eventually provides $J_{\Lambda,0}(\bar{\nu}) < \frac12(J_{\Lambda,0}(\nu_0) + J_{\Lambda,0}(\nu_0'))$, which contradicts minimality of $\nu_0$ and $\nu_0'$ simultaneously and provides uniqueness of the minimizer of $J_{\Lambda,0}$.
		
		Let now $(\kappa_n)_n$ be any sequence converging to $0$ and $(\kappa_{n'})_{n'}$ be any subsequence. By Proposition \ref{prop:scalingLimits} there exists an additional subsequence $(\nu_{n''})_{n''}$ such that $\nu_{n''}$ converges weak* to a minimizer of $J_{\Lambda,0}$, hence it converges to $\nu_0$ by uniqueness. Since every subsequence of $(\kappa_n)_n$ admits a subsequence converging to the same limit $\nu_0$, we conclude the whole sequence $(\nu_n)_n$ converges to $\nu_0$. In turn, since any sequence $(\kappa_n)_n$ converging to $0$ admits the same limit $\nu_0$, the continuous limit as $\kappa \to 0$ follows.	
	\end{proof}
	
	\begin{remark}[Joint stability under changes of $\kappa$ and $\Lambda$]
		\label{rem:JointStability}
		In the case when $\kappa_\infty \in (0,\infty)$ the behaviour of the $\HK_\kappa$-barycenter w.r.t.~variations in $\kappa$ can be reduced to the study of variations in $\Lambda$ via Remark \ref{rem:HKKappaRescaling}, by working in some finitely re-scaled $\Omega/\kappa$, for a sufficiently small but finite $\kappa$ instead, and by relocating the mass of $\Lambda$ onto the re-scaled measures $\mu$. By applying the results from Section \ref{sec:ExistenceStability} one then finds that one can consider joint limits in $\Lambda$ and $\kappa$, and that the order in which the limits are taken does not matter.
		
		The situation is more intricate when $\kappa_\infty \in \{0,\infty\}$. In the latter case, it can be problematic when $\Lambda$ is not exclusively supported on measures of equal mass. In the former case one may obtain different cluster points of minimizers $\nu$, depending on the order or relative speed in which $\Lambda$ and $\kappa$ approach their limits. An example is given in Remark \ref{rem:KappaZeroLimitExample} further below. Combining the above results we find that in both cases one obtains the limit minimizer for $\Lambda_\infty$ and $\kappa_\infty$ by first going to the limit in $\Lambda$ and then in~$\kappa$.
	\end{remark}
	
	\subsection{Duality}
	\label{sec:Duality}
	We now show that a dual problem for \eqref{eq:primal} can be formulated as
	\begin{multline}\label{eq:dual}
		\sup \Bigg\{ 
		\int_\setm \int_\Omega \Psi(\mu,x) \,\diff\mu(x)\,\diff\Lambda(\mu)
		\,\Bigg|\, \Psi, \Phi \in \c(\setm \times \Omega), (\Psi(\mu,\cdot), \Phi(\mu,\cdot)) \in Q_\kappa \tn{ for all } \mu \in \setm, \\
		\text{ and } \int_\setm \Phi(\mu,y) \,\diff\Lambda(\mu) \geq 0 \text{ for all } y \in \Omega\Bigg\}
		\tag{$\mathcal{D}_{\Lambda,\kappa}$}
	\end{multline}
	We will study this duality in more detail in Section \ref{sec:Diracs} (including dual existence and primal-dual optimality conditions) for the specific case when $\Lambda$ is concentrated on the set of Dirac measures. For the general case we content ourselves with equality of optimal values.
	In \cite{friesecke2019barycenters} duality of \eqref{eq:primal} was established by combining all pairwise optimization problems \eqref{eq:HKSemiCoupling} for $\HK^2_\kappa(\mu_i,\nu)$ in the discrete version of \eqref{eq:primal} (with a finite collection of input measures $\mu_i$) and then dualizing them jointly. Here we generalize this combination to the case of uncountably many input measures.

	\begin{proposition}\label{prop:dualExistence}
		Let $\Lambda \in \prob(\setm)$ and $\kappa \in (0, \infty)$. Then, \eqref{eq:dual} is a dual problem to \eqref{eq:primal}, more precisely
		\begin{equation}\label{eq:strongduality}
			\text{\eqref{eq:dual} $=$ \eqref{eq:primal}}.
		\end{equation}
	\end{proposition}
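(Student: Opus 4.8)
The plan is to establish \eqref{eq:strongduality} by sandwiching the two optimal values, using the known pointwise duality \eqref{eq:HKDual} for $\HK_\kappa^2$ together with a minimax exchange. The easy direction is weak duality: if $(\Psi,\Phi)$ is feasible for \eqref{eq:dual}, then for each $\mu$ the pair $(\Psi(\mu,\cdot),\Phi(\mu,\cdot))\in Q_\kappa$ is admissible in \eqref{eq:HKDual}, so for any $\nu\in\measp(\Omega)$ one has $\int_\Omega\Psi(\mu,\cdot)\,\diff\mu+\int_\Omega\Phi(\mu,\cdot)\,\diff\nu\le\HK_\kappa^2(\mu,\nu)$. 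Integrating against $\diff\Lambda(\mu)$ and using the constraint $\int_\setm\Phi(\mu,y)\,\diff\Lambda(\mu)\ge0$ for all $y\in\Omega$ (so that $\int_\setm\int_\Omega\Phi(\mu,y)\,\diff\nu(y)\,\diff\Lambda(\mu)\ge0$, recalling $\nu\ge0$) yields $\int_\setm\int_\Omega\Psi\,\diff\mu\,\diff\Lambda\le J_{\Lambda,\kappa}(\nu)$; taking sup over $(\Psi,\Phi)$ on the left and inf over $\nu$ on the right gives $\eqref{eq:dual}\le\eqref{eq:primal}$.

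For the reverse inequality I would write \eqref{eq:primal} as a saddle-point problem and apply a minimax theorem. Starting from $\HK_\kappa^2(\mu,\nu)=\sup_{(\psi,\phi)\in Q_\kappa}[\int\psi\,\diff\mu+\int\phi\,\diff\nu]$, substitute into $J_{\Lambda,\kappa}$ and obtain, after selecting for each $\mu$ a measurable near-optimal pair and assembling it into functions $\Psi,\Phi$ on $\setm\times\Omega$,
\[
\eqref{eq:primal}=\inf_{\nu\in\measp(\Omega)}\ \sup_{(\Psi,\Phi)}\ \left[\int_\setm\int_\Omega\Psi\,\diff\mu\,\diff\Lambda+\int_\Omega\Big(\int_\setm\Phi(\mu,y)\,\diff\Lambda(\mu)\Big)\diff\nu(y)\right],
\]
where the inner sup is over $(\Psi,\Phi)\in\c(\setm\times\Omega)^2$ with $(\Psi(\mu,\cdot),\Phi(\mu,\cdot))\in Q_\kappa$ for all $\mu$. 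The crucial observation is that the inner supremum is $+\infty$ unless $\int_\setm\Phi(\mu,\cdot)\,\diff\Lambda\ge0$ on $\Omega$ (by scaling $\nu$ up at a point where it is negative), in which case the optimal $\nu$ may be taken to be $0$ and the value reduces exactly to the objective of \eqref{eq:dual}. So it remains to justify exchanging $\inf_\nu$ and $\sup_{(\Psi,\Phi)}$. The objective is affine (hence concave) in $(\Psi,\Phi)$ and affine (hence convex) in $\nu$; the set $Q_\kappa$ is convex, and $\nu$ ranges over a convex set. To get the needed compactness I would restrict $\nu$ to $\setm=\{\nu:\|\nu\|\le M\}$, which is weak* compact and, by the argument already used in the proof of Proposition~\ref{prop:primalexistence} (rescaling via \eqref{eq:rescale}), contains a minimizer without changing the infimum; then Sion's minimax theorem applies (lower semicontinuity and convexity in $\nu$ on the compact set, concavity in $(\Psi,\Phi)$), giving $\eqref{eq:primal}=\eqref{eq:dual}$.

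The main obstacle is the rigorous passage from the pointwise duality to a jointly continuous pair $(\Psi,\Phi)\in\c(\setm\times\Omega)$: a priori one only gets, for each fixed $\mu$, some $(\psi_\mu,\phi_\mu)\in Q_\kappa$, and measurable selection would at best produce Borel dependence on $\mu$, not continuity. Two routes around this: (i) exploit that $\mu\mapsto\HK_\kappa^2(\mu,\nu)$ and the cost $\hat c_\kappa$ are sufficiently regular so that $c$-transforms of a fixed continuous potential depend continuously on $\mu$ in $\c(\setm\times\Omega)$ — i.e.\ parametrize feasible dual pairs by a single $\phi\in\c(\Omega)$ via its $Q_\kappa$-conjugate, which is automatically jointly continuous since it does not depend on $\mu$ at all; restricting the dual feasible set in this way is enough because the pointwise optimal potentials for $\HK_\kappa^2(\mu,\nu)$ can be taken $\mu$-independent once $\nu$ is fixed to the optimum; or (ii) follow \cite{friesecke2019barycenters} and first prove the identity for finitely supported $\Lambda=\sum_i\lambda_i\delta_{\mu_i}$ by finite-dimensional Fenchel–Rockafellar duality (where continuity in the discrete index is vacuous), then pass to general $\Lambda\in\prob(\setm)$ by approximating $\Lambda$ weak* by finitely supported measures and invoking Proposition~\ref{prop:stability} on the primal side together with an upper-semicontinuity/relaxation argument on the dual side. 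Route (ii) is closest to the paper's stated strategy of "generalizing the combination to uncountably many input measures," so I would pursue it, with the density of finitely supported measures in $\prob(\setm)$ (valid since $(\setm,\HK_\kappa)$ is compact metric) as the key approximation tool and the stability results of Section~\ref{sec:ExistenceStability} controlling the limit.
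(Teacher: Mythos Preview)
Your weak-duality argument $\eqref{eq:dual}\le\eqref{eq:primal}$ is correct and is exactly how the paper closes its chain of inequalities at the very end. The disagreement is entirely in the reverse direction $\eqref{eq:primal}\le\eqref{eq:dual}$.

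You have correctly isolated the obstacle, but neither of your two routes closes it. Sion's theorem (granting it applies) only yields $\eqref{eq:dual}=\sup\inf=\inf_{\nu}\sup_{(\Psi,\Phi)}$; you still need the separate identity $\inf_{\nu}\sup_{(\Psi,\Phi)}[\ldots]=\eqref{eq:primal}$, i.e.\ for each fixed $\nu$ the supremum over \emph{jointly continuous} $(\Psi,\Phi)$ with fiberwise $Q_\kappa$-constraint must recover $\int_\setm\HK_\kappa^2(\mu,\nu)\,\diff\Lambda(\mu)$. Route~(i) does not work: the optimal dual pair for $\HK_\kappa^2(\mu,\nu)$ genuinely depends on $\mu$ (already in the Dirac case of Section~\ref{sec:Diracs} the optimal $\psi$ varies with the location of $\delta_x$), and forcing $\Psi,\Phi$ to be $\mu$-independent collapses the constraint $\int_\setm\Phi\,\diff\Lambda\ge 0$ to the pointwise condition $\phi\ge 0$, which is strictly too restrictive and does not reproduce the barycenter value. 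Route~(ii) is incomplete: to pass from finitely supported $\Lambda_n$ to $\Lambda$ on the dual side you must build, from near-optimal finite families $(\psi_i,\phi_i)$, a continuous $(\Psi,\Phi)$ that is feasible for $\eqref{eq:dual}$ with the \emph{limit} measure $\Lambda$. A partition-of-unity extension does preserve the fiberwise $Q_\kappa$-constraint (by convexity of $Q_\kappa$), but it does not preserve the integral constraint $\int_\setm\Phi(\mu,y)\,\diff\Lambda(\mu)\ge 0$, and the ``upper-semicontinuity/relaxation'' step you invoke is exactly the missing work.

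The paper avoids the selection problem altogether by a different mechanism: it never starts from the pointwise dual \eqref{eq:HKDual}. Instead it uses the \emph{primal} semi-coupling formulation \eqref{eq:HKSemiCoupling} of $\HK_\kappa^2$ and lifts the whole barycenter problem to a single convex minimization over measures $\Gamma_1,\Gamma_2\in\measp(\setm\times\Omega^2)$ with marginal constraints $[(\mu,x,y)\mapsto(\mu,x)]_\#\Gamma_1=\Lambda\cdot\mu$ and $[(\mu,x,y)\mapsto(\mu,y)]_\#\Gamma_2=\Lambda\otimes\nu$. Disintegration shows this lifted primal dominates $\eqref{eq:primal}$. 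Then plain Fenchel \emph{weak} duality on the pairing $(\meas(\setm\times\Omega),\c(\setm\times\Omega))$ produces a dual whose variables are automatically continuous functions $(\Psi,\Phi)\in\c(\setm\times\Omega)^2$; computing the three conjugates (using \cite[Lemma~2.9]{ChizatDynamicStatic2018} for the cost term) identifies this dual exactly with $\eqref{eq:dual}$. The loop $\eqref{eq:primal}\le\text{(lifted primal)}\le\eqref{eq:dual}\le\eqref{eq:primal}$ then closes. The key point is that joint continuity in $\mu$ is obtained for free as the natural predual of $\meas(\setm\times\Omega)$: no measurable selection, no minimax theorem, and no approximation of $\Lambda$ are needed.
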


	\begin{proof}
		For given $\Lambda \in \prob(\setm)$, define the measure $\Lambda \cdot \mu \in \measp(\setm \times \Omega)$ as
		\[
		\int_{\setm \times \Omega} \phi \,\diff(\Lambda \cdot \mu)
		\assign \int_{\setm} \int_{\Omega} \phi(\mu,x) \,\diff\mu(x) \,\diff\Lambda(\mu) \quad \tn{for } \phi \in \c(\setm \times \Omega).
		\]
		We start with the primal problem and estimate
		\begin{align}
			&\eqref{eq:primal} = \inf_{\nu \in \measp(\Omega)} \int_\setm \HK_{\kappa}^2(\mu, \nu) \,\diff \Lambda(\mu)
			\label{eq:steponeprimal}
			\\
			&\overset{\eqref{eq:HKSemiCoupling}}{=}
			\inf_{\nu \in \measp(\Omega)}
			\int_\setm
			\Bigg[
			\inf_{
				\substack{
					\gamma_1, \gamma_2, \gamma \in \measp(\Omega^2), \gamma_i \ll \gamma \\
					[(x, y) \mapsto x]_\#(\gamma_1) = \mu \\
					[(x, y) \mapsto y]_\#(\gamma_2) = \nu
				}
			}
			\int_{\Omega \times \Omega}
			c_\kappa\left( x, \tfrac{\diff\gamma_1}{\diff \gamma}(x,y), y, \tfrac{\diff\gamma_2}{\diff \gamma}(x,y) \right)
			\diff \gamma(x,y)
			\Bigg]
			\diff \Lambda(\mu)
			\label{eq:infinside}
			\\
			&\leq
			\inf_{\nu \in \measp(\Omega)}
			\inf_{
				\substack{
					\Gamma_1, \Gamma_2, \Gamma \in \measp(\setm \times \Omega^2), \Gamma_i \ll \Gamma \\
					[(\mu, x, y) \mapsto (\mu, x)]_\#(\Gamma_1) = \Lambda \cdot \mu \\
					[(\mu, x, y) \mapsto (\mu, y)]_\#(\Gamma_2) = \Lambda \otimes \nu
				}
			}
			\int_{\setm \times \Omega \times \Omega}
			c_\kappa\left( x, \tfrac{\diff\Gamma_1}{\diff \Gamma}(\mu,x,y), y, \tfrac{\diff\Gamma_2}{\diff \Gamma}(\mu,x,y) \right)
			\diff \Gamma(\mu,x,y)
			\label{eq:infoutside}
			\\
			&=
			\inf_{
				\substack{
					\Gamma_1, \Gamma_2, \Gamma \in \measp(\setm \times \Omega^2), \Gamma_i \ll \Gamma \\
					[(\mu, x, y) \mapsto (\mu, x)]_\#(\Gamma_1) = \Lambda \cdot \mu \\
					\exists \nu \in \measp(\Omega) \text{ s.t. } [(\mu, x, y) \mapsto (\mu, y)]_\#(\Gamma_2) = \Lambda \otimes \nu
				}
			}
			\int_{\setm \times \Omega \times \Omega}
			c_\kappa\left( x, \tfrac{\diff\Gamma_0}{\diff \Gamma}(\mu,x,y), y, \tfrac{\diff\Gamma_1}{\diff \Gamma}(\mu,x,y) \right)
			\diff \Gamma(\mu,x,y)
			\label{eq:extendedPrimal}
		\end{align}
		The inequality from \eqref{eq:infinside} to \eqref{eq:infoutside} follows since every admissible candidate in the latter induces a family of admissible candidates for the former. Indeed, let $\Gamma_1,\Gamma_2,\Gamma$ be admissible in \eqref{eq:infoutside}. By the constraints it follows that $[(\mu,x,y) \mapsto \mu]_\#(\Gamma_i) \ll \Lambda$. As in \eqref{eq:HKSemiCoupling}, since $c_\kappa$ is positively 1-homogeneous in its second and fourth argument, the value of the integral does not depend on the choice of $\Gamma$, as long as $\Gamma_i \ll \Gamma$. Therefore, w.l.o.g.~we may choose $\Gamma$ such that $[(\mu,x,y) \mapsto \mu]_\#(\Gamma) \ll \Lambda$. Let now $(\gamma_{1,\mu})_{\mu \in \setm},(\gamma_{2,\mu})_{\mu \in \setm}$ and $(\gamma_\mu)_{\mu \in \setm}$ be the disintegrations of $\Gamma_1$, $\Gamma_2$ and $\Gamma$ with respect to $\Lambda$. These three families of measures are then admissible in \eqref{eq:infinside} and yield the same score.

		Now set $X \assign \meas(\setm \times \Omega^2)$, $Y \assign \meas(\setm \times \Omega)$, and define
		\begin{align*}
			G & \colon X \times X \to \RCupInf, &
			(\Gamma_1, \Gamma_2) & \mapsto 
			\int_{\setm \times \Omega^2}
			c_\kappa\left( x, \tfrac{\diff\Gamma_1}{\diff \Gamma}, y, \tfrac{\diff\Gamma_2}{\diff \Gamma} \right)
			\diff \Gamma
			\\
			F_1 & \colon Y \to \RCupInf, &
			\tau & \mapsto \begin{cases} 0 & \tn{if } \tau = \Lambda \cdot \mu, \\
				+\infty & \tn{else.} \end{cases} \\
			F_2 & \colon Y \to \RCupInf, &
			\tau & \mapsto \begin{cases} 0 & \tn{if } \tau = \Lambda \otimes \nu \tn{ for some } \nu \in \measp(\Omega), \\
				+\infty & \tn{else.} \end{cases}
		\end{align*}
		where in the definition of $G$ the measure $\Gamma$ is any positive measure such that $\Gamma_1 \ll \Gamma$ and $\Gamma_2 \ll \Gamma$ (note that $G(\Gamma_1, \Gamma_2)$ is finite only if both $\Gamma_1$ and $\Gamma_2$ are non-negative). Let us also define the two linear (projection) operators $Q_1, Q_2 \colon X \to Y$ as
		\[
		Q_1 \Gamma \assign [(\mu, x, y) \mapsto (\mu, x)]_\#(\Gamma) \quad \tn{and}
		\quad
		Q_2 \Gamma \assign [(\mu, x, y) \mapsto (\mu, y)]_\#(\Gamma).
		\]
		Hence, we can rewrite \eqref{eq:extendedPrimal} as
		\begin{equation}
			\label{eq:abstarctP}
			\inf_{\Gamma_1, \Gamma_2\in X} G(\Gamma_1, \Gamma_2) + F_1(Q_1 \Gamma_1) + F_2(Q_2 \Gamma_2).
			\tag{$\mathcal{P}$}
		\end{equation}
		By standard convex duality theory one finds \eqref{eq:abstarctP} $\leq$ \eqref{eq:abstarctD}, where \eqref{eq:abstarctD} is
		\begin{equation}
			\label{eq:abstarctD}
			\sup_{\Psi, \Phi \in \c(\setm \times \Omega)} -G^*(Q_1^*\Psi, Q_2^*\Phi) - F_1^*(-\Psi) - F_2^*(-\Phi).
			\tag{$\mathcal{D}$}
		\end{equation}
		Note that we do not insist on a vanishing duality gap here.
		Direct computation quickly yields
		\begin{align*}
			F_1^* & :  \c(\setm \times \Omega) \to \RCupInf, & \Psi & \mapsto \int_{\setm \times \Omega} \Psi \,\diff (\Lambda \cdot \mu), \\
			F_2^* & :  \c(\setm \times \Omega) \to \RCupInf, & \Phi & \mapsto \begin{cases}
				0 & \tn{if } \int_{\setm} \Phi(\mu,y)\,\diff \Lambda(\mu) \leq 0 \;\;\forall\,y \in \Omega \\
				+ \infty &\tn{else.} \end{cases}
			\intertext{and using \cite[Lemma 2.9]{ChizatDynamicStatic2018}}
			G^*(Q_1^*\cdot, Q_2^*\cdot) & : \c(\setm \times \Omega)^2 \to \RCupInf, &
			(\Psi,\Phi) & \mapsto 
			\begin{cases} 0 & \tn{if } (\Psi(\mu,\cdot), \Phi(\mu,\cdot)) \in Q_\kappa \tn{ for all } \mu \in \setm, \\
				+ \infty &\tn{else.} \end{cases}
		\end{align*}
		With this, \eqref{eq:abstarctD} becomes
		\begin{multline*}
			\sup \Bigg\{ 
			\int_\setm \int_\Omega \Psi(\mu,x) \,\diff\mu(x)\,\diff\Lambda(\mu)
			\,\Bigg|\, \Psi, \Phi \in \c(\setm \times \Omega), (\Psi(\mu,\cdot), \Phi(\mu,\cdot)) \in Q_\kappa \tn{ for all } \mu \in \setm, \\
			\text{ and } \int_\setm \Phi(\mu,y) \,\diff\Lambda(\mu) \geq 0 \text{ for all } y \in \Omega\Bigg\},
		\end{multline*}
		which is exactly \eqref{eq:dual}. Let us now fix a minimizer $\nu_\kappa \in \measp(\Omega)$ of $J_{\Lambda, \kappa}$ and  continue from above
		\begin{align}
			\eqref{eq:primal} & \leq \eqref{eq:extendedPrimal} = \eqref{eq:abstarctP} \leq \eqref{eq:abstarctD} = \eqref{eq:dual}
			\\
			&=
			\sup_{
				\substack{
					\Psi, \Phi \in \c(\setm \times \Omega) \\
					(\Psi(\mu,\cdot), \Phi(\mu,\cdot)) \in Q_\kappa \; \forall \mu \in \setm \\
					\int_\setm \Phi(\mu,y) \,\diff\Lambda(\mu) \geq 0 \; \forall y \in \Omega
				}
			}
			\int_\setm \int_\Omega \Psi(\mu,x) \,\diff\mu(x)\,\diff\Lambda(\mu)
			\\
			&\leq
			\sup_{
				\substack{
					\Psi, \Phi \in \c(\setm \times \Omega) \\
					(\Psi(\mu,\cdot), \Phi(\mu,\cdot)) \in Q_\kappa \; \forall \mu \in \setm \\
					\int_\setm \Phi(\mu,y) \,\diff\Lambda(\mu) \geq 0 \; \forall y \in \Omega
				}
			}
			\int_\setm \int_\Omega \Psi(\mu,x) \,\diff\mu(x)\,\diff\Lambda(\mu) + \int_\Omega \int_\setm \Phi(\mu,y) \,\diff\Lambda(\mu) \,\diff\nu_\kappa(y)
			\\
			&\leq
			\sup_{
				\substack{
					\Psi, \Phi \in \c(\setm \times \Omega) \\
					(\Psi(\mu,\cdot), \Phi(\mu,\cdot)) \in Q_\kappa \; \forall \mu \in \setm
				}
			}
			\int_\setm \left[ \int_\Omega \Psi(\mu,x) \,\diff\mu(x) + \int_\Omega \Phi(\mu,y) \,\diff\nu_\kappa(y)\right]\,\diff\Lambda(\mu)
			\\
			&\leq
			\int_\setm \left[
			\sup_{
				\substack{
					\psi, \phi \in \c(\Omega) \\
					(\psi, \phi) \in Q_\kappa
				}
			}
			\int_\Omega \psi(x) \,\diff\mu(x) + \int_\Omega \phi(y) \,\diff\nu_\kappa(y)\right]\,\diff\Lambda(\mu)
			=
			\int_\setm \HK_\kappa^2(\mu, \nu_\kappa)\,\diff\Lambda(\mu) = \eqref{eq:primal}.
			\label{eq:laststep}
		\end{align}
		The chain of inequalities \eqref{eq:steponeprimal}-\eqref{eq:laststep} is then actually a chain of equalities. Hence, \eqref{eq:dual} is a dual problem to \eqref{eq:primal} and the optimal values coincide.
	\end{proof}

	\begin{remark}[Formal Wasserstein limit of \eqref{eq:dual}]
		Considering Theorem \ref{thm:HKScaling} and Proposition \ref{prop:scalingLimits} one might expect to recover a dual problem for the Wasserstein-2 distance by considering $\kappa^2 \cdot \text{\eqref{eq:dual}}$ and then sending $\kappa \to \infty$. The problem $\kappa^2 \cdot \text{\eqref{eq:dual}}$ can be written as
		\begin{multline*}
			\sup \Bigg\{ 
			\int_\setm \int_\Omega \Psi(\mu,x) \,\diff\mu(x)\,\diff\Lambda(\mu)
			\,\Bigg|\, \Psi, \Phi \in \c(\setm \times \Omega), (\Psi(\mu,\cdot)/\kappa^2, \Phi(\mu,\cdot)/\kappa^2) \in Q_\kappa \tn{ for all } \mu \in \setm, \\
			\text{ and } \int_\setm \Phi(\mu,y) \,\diff\Lambda(\mu) \geq 0 \text{ for all } y \in \Omega\Bigg\}.
		\end{multline*}
		At a purely intuitive level one can then consider the limit of the condition $(\Psi(\mu,\cdot)/\kappa^2, \Phi(\mu,\cdot)/\kappa^2) \in Q_\kappa$ for some $\mu \in \setm$ as $\kappa \to \infty$:
		\begin{multline*}
			1-\Psi(\mu,x)/\kappa^2-\Phi(\mu,y)/\kappa^2+o(1/\kappa^2) = (1-\Psi(\mu,x)/\kappa^2)(1-\Phi(\mu,y)/\kappa^2) \\
			\geq \Cos^2(|x-y|/\kappa) = 1-|x-y|^2/\kappa^2+o(1/\kappa^2).
		\end{multline*}
		That is, we expect to obtain the limit condition $\Psi(\mu,x)+\Phi(\mu,y) \leq |x-y|^2$, which would turn \eqref{eq:dual} into a version of the well-known dual problem for the Wasserstein barycenter. To the best of our knowledge this dual has so far not yet been stated in the literature for the case of a continuum of input measures.
	\end{remark}
	\section{Barycenter of a continuum of Dirac measures}
	\label{sec:Diracs}
	\subsection{Problem setup and basic properties}
	Throughout Section \ref{sec:Diracs} we study the particular case when $\Lambda$ is concentrated on the set of unit Dirac measures, i.e.~$\Lambda$-almost every $\mu$ is of the form $\delta_x$ for some $x \in \Omega$.
	In this case $\Lambda$ can be represented by a measure $\rho \in \prob(\Omega)$ which gives the distribution of the locations $x \in \Omega$. More precisely, for any $\rho \in \prob(\Omega)$ we define the measure $\Lambda_\rho \assign T_\# \rho$ where $T: \Omega \to \prob(\Omega)$, $x \mapsto \delta_x$, or equivalently
	\[
	\int_\setm \phi(\mu) \,\diff\Lambda_\rho(\mu) = \int_{\Omega} \phi(\delta_x) \,\diff\rho(x) \quad \text{for all } \phi \in \c(\setm).
	\]
	In this particular case the primal problem $(\mathcal{P}_{\Lambda_\rho, \kappa})$ simplifies to
	\begin{equation}
		\label{eq:primalr}
		\inf \left\{ J_{\rho,\kappa}(\nu) \assign \int_{\Omega} \HK_\kappa^2(\delta_x,\nu)\,\diff\rho(x) \middle| \nu \in \measp(\Omega) \right\}.
		\tag{$\mathcal{P}_{\rho,\kappa}$}
	\end{equation}
	For the Wasserstein case (i.e.~$\kappa=\infty$) this problem is trivial, the unique minimizer being given by $\nu=\delta_{\ol{x}}$ where $\ol{x}\assign \int_\Omega x\,\diff \rho(x)$ is the center of mass of $\rho$ ($\ol{x} \in \Omega$ by convexity of $\Omega$).
	For $\rho$ being a finite superposition of Dirac measures, i.e.~$\rho=\sum_{i=1}^n m_i\,\delta_{x_i}$, and $\kappa \in (0,\infty)$ the problem was studied in \cite{friesecke2019barycenters}.
	It was shown that for $\kappa$ sufficiently large the minimizer $\nu$ is again a single Dirac measure (consistent with the scaling limit of Proposition \ref{prop:scalingLimits}). However, for smaller $\kappa$, the minimizer $\nu$ may contain multiple Diracs or even be diffuse.
	
	Therefore, we now study \eqref{eq:primalr} in some more depth.
	First, we will further simplify the expression of \eqref{eq:primalr} by making the expression $\HK^2_\kappa(\delta_x,\nu)$ more explicit. Then, in Section \ref{sec:Dualityr} we revisit the dual problem, derive dual existence and primal-dual optimality conditions. In Section \ref{sec:DiscreteDiffuse} we present some results on whether barycenters $\nu$ are discrete or diffuse and we study the asymptotic behaviour of the barycenter as $\kappa \to 0$ in Section \ref{sec:Asymptotic}.
	
	\begin{proposition}\label{prop:HKdirac}
		Let $\kappa \in (0,\infty)$, $m > 0$, $\bar{x} \in \Omega$, $\nu \in \measp(\Omega)$. One finds
		\begin{align*}
			\HK_\kappa^2(m\delta_{\bar{x}}, \nu) &= \sup_{\xi < 1} \left[ m\xi + \|\nu\| - \frac{1}{1-\xi} \int_\Omega \Cos^2(|\bar{x}-y|/\kappa) \,\diff\nu(y) \right] \\
			&= m + \|\nu\| - 2\sqrt{m} \sqrt{ \int_\Omega \Cos^2(|\bar{x}-y|/\kappa) \,\diff\nu(y) }.
		\end{align*}
	\end{proposition}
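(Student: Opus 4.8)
The plan is to work entirely from the Kantorovich-type dual \eqref{eq:HKDual}. For $\mu=m\delta_{\bar{x}}$ the dual objective $\int_\Omega\psi\,\diff\mu+\int_\Omega\phi\,\diff\nu$ collapses to $m\,\psi(\bar{x})+\int_\Omega\phi\,\diff\nu$, so on the $\psi$-side only the single number $\xi\assign\psi(\bar{x})$ matters. I would first prove the displayed supremum-over-$\xi$ identity by the two inequalities ``$\le$'' and ``$\ge$'', and then deduce the closed form by a one-line maximization in $\xi$. (One could equally start from the soft-marginal primal \eqref{eq:HKSoftMarginal}, where $\proj_1\gamma\ll m\delta_{\bar{x}}$ forces $\gamma=\delta_{\bar{x}}\otimes\sigma$, but the dual route makes the upper bound essentially free and avoids a support case distinction.)

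For the upper bound ``$\le$'', I would take an arbitrary $(\psi,\phi)\in Q_\kappa$. Testing the constraint $(1-\psi(x))(1-\phi(y))\ge\Cos^2(|x-y|/\kappa)$ at $x=y=\bar{x}$ gives $(1-\psi(\bar{x}))^2\ge\Cos^2(0)=1$, which together with $\psi(\bar{x})\le1$ forces $\xi\assign\psi(\bar{x})<1$. Testing the same constraint at $x=\bar{x}$ and arbitrary $y$ yields the pointwise bound $\phi(y)\le 1-\tfrac{1}{1-\xi}\Cos^2(|\bar{x}-y|/\kappa)$, whose integral against the finite measure $\nu$ gives $m\,\psi(\bar{x})+\int_\Omega\phi\,\diff\nu\le m\xi+\|\nu\|-\tfrac{1}{1-\xi}\int_\Omega\Cos^2(|\bar{x}-y|/\kappa)\,\diff\nu(y)$. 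Passing to the supremum over $(\psi,\phi)\in Q_\kappa$ and then over $\xi<1$ is immediate.

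For the lower bound ``$\ge$'', I would fix $\xi<1$ and build a near-optimal pair. The natural candidate $\phi(y)=1-\tfrac{1}{1-\xi}\Cos^2(|\bar{x}-y|/\kappa)$ turns the inequality above into an equality, but it attains the value $1$ on $\{y:|\bar{x}-y|\ge\kappa\pi/2\}$, and this is exactly where the difficulty sits: for such a $\phi$ no continuous companion $\psi$ satisfies the $Q_\kappa$ constraint whenever $\Omega\not\subset\overline{B}(\bar{x},\kappa\pi/2)$. I would therefore truncate, setting $\phi^\varepsilon\assign\min\{1-\tfrac{1}{1-\xi}\Cos^2(|\bar{x}-\cdot|/\kappa),\,1-\varepsilon\}$ for $0<\varepsilon<\tfrac{1}{1-\xi}$, and take the induced transform $\psi^\varepsilon(x)\assign 1-\sup_{y\in\Omega}\tfrac{\Cos^2(|x-y|/\kappa)}{1-\phi^\varepsilon(y)}$. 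Since the denominator is now bounded below by $\varepsilon$ and $(x,y)\mapsto\tfrac{\Cos^2(|x-y|/\kappa)}{1-\phi^\varepsilon(y)}$ is continuous on the compact product $\Omega\times\Omega$, its partial supremum in $y$ is continuous, so $\psi^\varepsilon\in\c(\Omega)$ and is finite; taking $y=x$ in the supremum shows $\psi^\varepsilon(x)<1$; and by construction $(1-\psi^\varepsilon(x))(1-\phi^\varepsilon(y))\ge\Cos^2(|x-y|/\kappa)$, so $(\psi^\varepsilon,\phi^\varepsilon)\in Q_\kappa$. A short computation of the ratio $\tfrac{\Cos^2(|\bar{x}-y|/\kappa)}{1-\phi^\varepsilon(y)}$ shows it equals $1-\xi$ on the untruncated set (in particular at $y=\bar{x}$) and is strictly smaller on the truncated set, hence $\psi^\varepsilon(\bar{x})=1-(1-\xi)=\xi$. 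The dual value is therefore at least $m\xi+\int_\Omega\phi^\varepsilon\,\diff\nu$, and letting $\varepsilon\to0$ (dominated convergence: $\phi^\varepsilon\to1-\tfrac{1}{1-\xi}\Cos^2(|\bar{x}-\cdot|/\kappa)$ pointwise and uniformly bounded) gives $\HK_\kappa^2(m\delta_{\bar{x}},\nu)\ge m\xi+\|\nu\|-\tfrac{1}{1-\xi}\int_\Omega\Cos^2(|\bar{x}-y|/\kappa)\,\diff\nu(y)$; taking the supremum over $\xi<1$ closes the argument.

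Finally, with $C\assign\int_\Omega\Cos^2(|\bar{x}-y|/\kappa)\,\diff\nu(y)\ge0$ I would maximize $g(\xi)\assign m\xi+\|\nu\|-\tfrac{C}{1-\xi}$ over $\xi<1$. If $C=0$, $g$ is increasing with $\sup_{\xi<1}g=m+\|\nu\|$; if $C>0$, $g$ is strictly concave with $g'(\xi)=m-\tfrac{C}{(1-\xi)^2}$ vanishing at $\xi_\star=1-\sqrt{C/m}<1$ and $g(\xi_\star)=m+\|\nu\|-2\sqrt{mC}$. In both cases the value is $m+\|\nu\|-2\sqrt{m}\sqrt{C}$, which is the second claimed expression. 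As indicated, the only genuinely non-routine step is the lower bound — specifically, the need to truncate because the formally optimal $\phi$ can reach the value $1$, and the verification that the induced potential $\psi^\varepsilon$ is continuous, $Q_\kappa$-feasible, and still satisfies $\psi^\varepsilon(\bar{x})=\xi$.
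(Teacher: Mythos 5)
Your proof is correct and follows essentially the same route as the paper: both work from the dual \eqref{eq:HKDual}, note that only $\psi(\bar{x})$ enters the objective, handle the non-attainment of the formal optimizer by an approximation argument (the paper steepens $\psi$ via $\psi_n(x)=\psi_{\bar{x}}-n\|x-\bar{x}\|$ and lets the induced $\phi_n$ increase monotonically, while you truncate $\phi$ at $1-\varepsilon$ and take the induced $\psi^\varepsilon$ — a mirrored version of the same idea), and conclude with the identical one-dimensional maximization in $\xi$. One cosmetic slip: testing the constraint at $x=y=\bar{x}$ gives $(1-\psi(\bar{x}))(1-\phi(\bar{x}))\ge 1$, not $(1-\psi(\bar{x}))^2\ge 1$, but together with $\psi(\bar{x}),\phi(\bar{x})\le 1$ this still forces $\psi(\bar{x})<1$, which is all you use.
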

	
	\begin{proof}
		Let $\mu = m \, \delta_{\bar{x}}$. We recall from \eqref{eq:HKDual} that
		\[
		\HK_\kappa^2(\mu, \nu) = \sup_{(\psi, \phi) \in Q_\kappa} \int_\Omega \psi(x)\,\diff \mu(x) + \int_\Omega \phi(y) \,\diff\nu(y) = \sup_{(\psi, \phi) \in Q_\kappa} m\psi(\bar{x}) + \int_\Omega \phi(y) \,\diff\nu(y),
		\]
		where
		\[
		Q_\kappa = \left\{(\psi,\phi)\in\c(\Omega)^2 \,\text{ s.t. }\,
		\begin{aligned}
			&\psi(x), \phi(y) \in (-\infty,1] \\ &(1-\psi(x))(1-\phi(y))\geq \Cos(|x-y|/\kappa)^2
		\end{aligned}
		\quad \forall x,y \in \Omega\right\}.
		\]
		Note that only the value of $\psi$ at $\bar{x}$ enters the energy. For any $\psi_{\bar{x}} \in \R$ and $n \in \N$ set $\psi_n(x) \assign \psi_{\bar{x}} - n \cdot \|x-\bar{x}\|$. For each $\psi_n$ the remaining supremum over $\phi$ is then attained by
		\[
		\phi_n(y) \assign \inf_{x \in \Omega} \;1 - \frac{\Cos^2(|x-y|/\kappa)}{1-\psi_n(x)},
		\]
		which is indeed a continuous function in $y$.
		As $n \to \infty$ one has $\phi_n \nearrow \phi$ pointwise for
		\[
		\phi(y) \assign 1 - \frac{\Cos^2(|\bar{x}-y|/\kappa)}{1-\psi_{\bar{x}}}
		\]
		i.e.~only the constraint for $x=\bar{x}$ in $Q_\kappa$ remains. Therefore, by monotone convergence, the problem reduces to
		\begin{align*}
			\HK_\kappa^2(m\delta_{\bar{x}}, \nu) &= \sup_{\psi_{\bar{x}} < 1} \left[ m\psi_{\bar{x}} + \|\nu\| - \frac{1}{1-\psi_{\bar{x}}} \int_\Omega \Cos^2(|\bar{x}-y|/\kappa) \,\diff\nu(y) \right] \\
			&= m + \|\nu\| - 2\sqrt{m} \sqrt{ \int_\Omega \Cos^2(|\bar{x}-y|/\kappa) \,\diff\nu(y) }.
			\qedhere
		\end{align*}
	\end{proof}
	
	\begin{corollary}
		A primal minimizer for $\HK_\kappa^2(m\delta_{\bar{x}}, \nu)$ in \eqref{eq:HKSoftMarginal} is given by
		
		\begin{equation}
			\label{eq:PrimalDiracMin}
			\gamma = \delta_{\bar{x}} \otimes \sigma
			\qquad \tn{with} \qquad
			\sigma = \nu \; \Cos^2(|\bar{x}-\cdot|/\kappa) \; \sqrt{\frac{m}{\|\nu \; \Cos^2(|\bar{x}-\cdot|/\kappa) \|}} \;.
		\end{equation}
	\end{corollary}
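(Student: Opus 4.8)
The plan is to substitute the proposed $\gamma = \delta_{\bar x}\otimes\sigma$ into the soft-marginal functional \eqref{eq:HKSoftMarginal} and verify that it attains the value computed in Proposition~\ref{prop:HKdirac}; since that value is the infimum and $\gamma$ is clearly feasible ($\gamma \in \measp(\Omega^2)$), this forces $\gamma$ to be a minimizer. To organize the computation I would abbreviate $w(y) \assign \Cos^2(|\bar x - y|/\kappa)$ and $C \assign \int_\Omega w\,\diff\nu = \|\nu\,w\|$, so that $\sigma = \sqrt{m/C}\,w\,\nu$, hence $\|\sigma\| = \sqrt{mC}$, $\RadNik{\sigma}{\nu} = \sqrt{m/C}\,w$, and $\proj_1\gamma = \|\sigma\|\,\delta_{\bar x} = \sqrt{mC}\,\delta_{\bar x}$. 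The degenerate case $C = 0$ (i.e.\ $\nu$ concentrated on $\{|\bar x - y| \ge \kappa\pi/2\}$) should be disposed of separately by taking $\gamma = 0$, whose functional value $m + \|\nu\|$ again matches Proposition~\ref{prop:HKdirac}.

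The first key observation is a pointwise identity for the transport cost: on $\{w > 0\} = \{|\bar x - y| < \kappa\pi/2\}$ one has $\Cos(|\bar x - y|/\kappa) = \cos(|\bar x - y|/\kappa)$, hence $\hat{c}_\kappa(\bar x, y) = -2\log\cos(|\bar x - y|/\kappa) = -\log w(y)$; on $\{w = 0\}$ the density $\RadNik{\sigma}{\nu}$ vanishes, so $\sigma$ is null there and the subset where $\hat{c}_\kappa = +\infty$ is $\sigma$-negligible (invoking $\infty\cdot 0 = 0$). Therefore $\int_{\Omega^2}\hat{c}_\kappa\,\diff\gamma = \int_\Omega \hat{c}_\kappa(\bar x, y)\,\diff\sigma(y) = -\sqrt{m/C}\int_\Omega w\log w\,\diff\nu$, with the convention $0\log 0 = 0$.

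Next I would evaluate the two entropy terms directly from the definition of $\KL$ and $\varphi(s) = s\log s - s + 1$. Since $\RadNik{\proj_1\gamma}{m\delta_{\bar x}}$ is the constant $\sqrt{C/m}$, one obtains $\KL(\proj_1\gamma|m\delta_{\bar x}) = m\,\varphi(\sqrt{C/m}) = \sqrt{mC}\log\sqrt{C/m} - \sqrt{mC} + m$; and from $\RadNik{\sigma}{\nu} = \sqrt{m/C}\,w$ one obtains $\KL(\sigma|\nu) = \int_\Omega \varphi(\sqrt{m/C}\,w)\,\diff\nu = \sqrt{m/C}\int_\Omega w\log w\,\diff\nu + \sqrt{mC}\log\sqrt{m/C} - \sqrt{mC} + \|\nu\|$, where again $\{w=0\}$ contributes only through the constant term $\varphi(0)=1$.

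Finally I would sum the three contributions: the terms $\mp\sqrt{m/C}\int_\Omega w\log w\,\diff\nu$ cancel between the transport cost and $\KL(\sigma|\nu)$, and the logarithmic terms combine as $\sqrt{mC}\bigl(\log\sqrt{C/m} + \log\sqrt{m/C}\bigr) = \sqrt{mC}\log 1 = 0$, leaving precisely $m + \|\nu\| - 2\sqrt{mC} = m + \|\nu\| - 2\sqrt{m}\,\sqrt{\int_\Omega \Cos^2(|\bar x - y|/\kappa)\,\diff\nu(y)}$, which is exactly $\HK_\kappa^2(m\delta_{\bar x},\nu)$ by Proposition~\ref{prop:HKdirac}. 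Hence $\gamma$ attains the infimum and is a minimizer. The only genuine subtlety is the bookkeeping on $\{w = 0\}$, where one must use $\infty\cdot 0 = 0$ and $0\log 0 = 0$ to see that both the possibly infinite transport cost and the $w\log w$ integrand vanish there; the remainder is routine algebra and the cancellations do the rest.
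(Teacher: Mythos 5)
Your proposal is correct and follows exactly the paper's argument: plug the candidate $\gamma=\delta_{\bar x}\otimes\sigma$ into \eqref{eq:HKSoftMarginal} and check that the resulting value $m+\|\nu\|-2\sqrt{m}\sqrt{\int_\Omega\Cos^2(|\bar x-y|/\kappa)\,\diff\nu(y)}$ coincides with the optimal value from Proposition~\ref{prop:HKdirac}. The paper leaves this computation implicit, while you carry it out explicitly (including the $\{w=0\}$ bookkeeping and the degenerate case), so there is nothing to add.
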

	\begin{proof}
		This follows directly by plugging expression  \eqref{eq:PrimalDiracMin} into \eqref{eq:HKSoftMarginal} and comparing the objective with Proposition~\ref{prop:HKdirac}.
	\end{proof}
	If we were to interpret an HK-barycenter $\nu$ as a `generalized clustering' of some input data $\rho$, then for each $\bar{x} \in \Omega$, the corresponding measure $\sigma$ (with $m=1$) could be interpreted as the association strength of the point at $\bar{x}$ with each of the points in the clustering. It would be a common occurrence that a point is associated with multiple `clusters' at the same time.
	
	Next, Proposition \ref{prop:HKdirac} also yields a simpler form of the primal objective which we will subsequently study in more detail.
	
	\begin{corollary}
		Let $\kappa \in (0,\infty)$ and $\rho \in \prob(\Omega)$. Then \eqref{eq:primalr} admits a minimizer $\nu \in \measp(\Omega)$ and the objective function $J_{\rho,\kappa}$ in \eqref{eq:primalr} takes the form
		\begin{align}
			\label{eq:primalrDirect}
			J_{\rho,\kappa}(\nu) = 1 + \|\nu\| -2 \int_{\Omega} \sqrt{\int_\Omega \Cos^2(|x-y|/\kappa)\,\diff\nu(y)}\,\diff\rho(x).
		\end{align}
	\end{corollary}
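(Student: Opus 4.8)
The plan is to obtain the statement as a direct consequence of Proposition~\ref{prop:HKdirac} together with the general existence result of Proposition~\ref{prop:primalexistence}. Recall that $(\mathcal{P}_{\rho,\kappa})$ is by definition the problem $(\mathcal{P}_{\Lambda_\rho,\kappa})$ for the particular choice $\Lambda = \Lambda_\rho = T_\# \rho$ with $T(x) = \delta_x$. Hence existence of a minimizer $\nu \in \setm \subset \measp(\Omega)$ follows immediately from Proposition~\ref{prop:primalexistence}, and it only remains to rewrite the objective $J_{\rho,\kappa}(\nu) = \int_\setm \HK_\kappa^2(\mu,\nu)\,\diff\Lambda_\rho(\mu)$ in the claimed explicit form \eqref{eq:primalrDirect}.

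The rewriting proceeds as follows. First I would use the defining property of the push-forward $\Lambda_\rho = T_\# \rho$ to transfer the integral from $\setm$ to $\Omega$, namely
\[
J_{\rho,\kappa}(\nu) = \int_\setm \HK_\kappa^2(\mu,\nu)\,\diff\Lambda_\rho(\mu) = \int_\Omega \HK_\kappa^2(\delta_x,\nu)\,\diff\rho(x),
\]
which is exactly the simplified primal objective already recorded in \eqref{eq:primalr}. Then I would apply Proposition~\ref{prop:HKdirac} with $m=1$ and $\bar{x}=x$ to substitute
\[
\HK_\kappa^2(\delta_x,\nu) = 1 + \|\nu\| - 2\sqrt{\int_\Omega \Cos^2(|x-y|/\kappa)\,\diff\nu(y)}
\]
pointwise in $x$. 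Inserting this into the integral and splitting it, using that $\rho$ is a probability measure so $\int_\Omega 1\,\diff\rho(x) = 1$ and $\int_\Omega \|\nu\|\,\diff\rho(x) = \|\nu\|$, yields precisely \eqref{eq:primalrDirect}.

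The only technical point worth a word is measurability: the integrand $x \mapsto \sqrt{\int_\Omega \Cos^2(|x-y|/\kappa)\,\diff\nu(y)}$ must be (Borel) measurable for the final integral against $\rho$ to make sense, but this is clear since $(x,y) \mapsto \Cos^2(|x-y|/\kappa)$ is continuous and bounded on $\Omega \times \Omega$, so $x \mapsto \int_\Omega \Cos^2(|x-y|/\kappa)\,\diff\nu(y)$ is continuous by dominated convergence, and composing with the square root preserves continuity. There is no real obstacle here; the entire statement is a bookkeeping corollary of the two cited results, and the main ``work'' was already done in the proof of Proposition~\ref{prop:HKdirac}.
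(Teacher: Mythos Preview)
Your proposal is correct and follows exactly the same approach as the paper's proof: existence via Proposition~\ref{prop:primalexistence} applied to $\Lambda=\Lambda_\rho$, and the explicit formula by substituting Proposition~\ref{prop:HKdirac} with $m=1$ into the integrand of \eqref{eq:primalr}. Your added remark on continuity of the integrand is a harmless elaboration beyond what the paper states.
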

	\begin{proof}
		Existence of a minimizer follows by Proposition \ref{prop:primalexistence} applied for $\Lambda = \Lambda_\rho$, the simplified objective in \eqref{eq:primalrDirect} follows by applying Proposition \ref{prop:HKdirac} for the integrand $\HK_\kappa^2(\delta_x,\nu)$ inside $J_{\rho, \kappa}$ in \eqref{eq:primalr}.
	\end{proof}

	\subsection{Duality}
	\label{sec:Dualityr}
	Next, we prove that when $\Lambda = \Lambda_\rho$, the dual \eqref{eq:dual} takes the specific form
	
	\begin{multline}\label{eq:dualr}
		\sup \Bigg\{ 
		\int_{\Omega} \psi(x)\,\diff\rho(x)
		\;\Bigg|\; \psi \in \c(\Omega),\, \psi < 1 \\
		\tn{ and }	F_{\rho,\kappa}(\psi)(y) \assign \int_{\Omega} \frac{\Cos^2(|x-y|/\kappa)}{1-\psi(x)} \,\diff\rho(x) \leq 1 \text{ for all } y \in \Omega\Bigg\}.
		\tag{$\mathcal{D}_{\rho,\kappa}$}
	\end{multline}
	
	In the following, we will refer to $F_{\rho,\kappa}$ as the \textit{constraint function}.

	\begin{proposition}
		\label{prop:DualityR}
		Let $\rho \in \prob(\Omega)$ and $\kappa \in (0, \infty)$. Then,
		\begin{enumerate}[(i)]
			\item \eqref{eq:dualr} is a dual problem to \eqref{eq:primalr} and \eqref{eq:dualr} $ = $ \eqref{eq:primalr},
			\item \eqref{eq:dualr} admits a maximizer $\psi \in \c(\Omega)$ which is unique on the support of $\rho$ and for any primal optimizer $\nu$ we have
			\begin{subequations}\label{eq:pdEquation}
				\begin{align}
					\label{eq:pdEquationPsi}
					\psi(x) & = 1- \sqrt{\int_\Omega \Cos^2(|x-y|/\kappa)\,\diff\nu(y)}
					&\text{ for $\rho$-a.e.~} x \in \Omega, \\
					\label{eq:pdEquationActive}
					F_{\rho,\kappa}(\psi)(y)&=1
					&\text{ for $\nu$-a.e.~} y \in \Omega,
				\end{align}		
			\end{subequations}
			\item \label{point:optimalPair} 
			an admissible couple $(\nu, \psi) \in \measp(\Omega) \times \c(\Omega)$ is optimal if and only if \eqref{eq:pdEquation} holds.
		\end{enumerate}
	\end{proposition}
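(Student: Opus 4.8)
The plan is to get all of \emph{(i)--(iii)} from two ingredients: (a) a one-line weak-duality estimate built on Proposition~\ref{prop:HKdirac}, and (b) an explicit construction of an optimal dual $\psi$ out of any minimiser $\nu$ of \eqref{eq:primalr}. The complementarity built into (b) yields \eqref{eq:pdEquation}, and inspecting the equality cases yields the ``if and only if'' in point (iii) together with uniqueness on $\textup{supp}(\rho)$.

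\textbf{Weak duality.} First I would fix $\psi\in\c(\Omega)$ with $\psi<1$ and $F_{\rho,\kappa}(\psi)\le1$, and use Proposition~\ref{prop:HKdirac} (with $m=1$) to get, pointwise in $x$ and for every $\nu$, $\HK_\kappa^2(\delta_x,\nu)\ge\psi(x)+\|\nu\|-\tfrac{1}{1-\psi(x)}\int_\Omega\Cos^2(|x-y|/\kappa)\,\diff\nu(y)$. Integrating against $\rho$ and exchanging integrals (legitimate since $1-\psi\ge\min_\Omega(1-\psi)>0$ by compactness) yields $J_{\rho,\kappa}(\nu)\ge\int_\Omega\psi\,\diff\rho+\int_\Omega\bigl(1-F_{\rho,\kappa}(\psi)(y)\bigr)\diff\nu(y)\ge\int_\Omega\psi\,\diff\rho$, using $F_{\rho,\kappa}(\psi)\le1$ and $\nu\ge0$ in the last step; taking the infimum over $\nu$ and the supremum over admissible $\psi$ gives $\text{\eqref{eq:primalr}}\ge\text{\eqref{eq:dualr}}$. (Alternatively, the same inequality plus strong duality drop out of Proposition~\ref{prop:dualExistence} specialised to $\Lambda=\Lambda_\rho$, by setting $\psi(x)\assign\Psi(\delta_x,x)$ in \eqref{eq:dual} and evaluating the $Q_\kappa$-constraint and the sign constraint at the point $x$; I would present the self-contained version.)

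\textbf{The optimal pair.} Next let $\nu$ be a minimiser of \eqref{eq:primalr} (it exists, as shown above) and abbreviate $a_\nu(x)\assign\int_\Omega\Cos^2(|x-y|/\kappa)\,\diff\nu(y)$, which is continuous in $x$. I would extract two first-order optimality conditions. Comparing $\nu$ with its rescalings $s\nu$, for which $J_{\rho,\kappa}(s\nu)=1+s\|\nu\|-2\sqrt s\int\sqrt{a_\nu}\,\diff\rho$, forces $\|\nu\|=\int_\Omega\sqrt{a_\nu}\,\diff\rho$, and in particular $\nu\ne0$ (since $J_{\rho,\kappa}(\varepsilon\delta_{x_0})<1=J_{\rho,\kappa}(0)$ for small $\varepsilon$ and any $x_0\in\textup{supp}(\rho)$). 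Comparing $\nu$ with $\nu+t\delta_y$ and letting $t\downarrow0$, with monotone convergence in the difference quotient, forces $\int_\Omega\tfrac{\Cos^2(|x-y|/\kappa)}{\sqrt{a_\nu(x)}}\,\diff\rho(x)\le1$ for every $y$; comparing $\nu$ with the measure obtained by removing a bit of $\nu$ from a set $E$ forces the reverse inequality $\nu$-a.e. Setting $\psi\assign1-\sqrt{a_\nu}$ --- exactly the maximiser in Proposition~\ref{prop:HKdirac} --- these read $F_{\rho,\kappa}(\psi)\le1$ on $\Omega$ and $F_{\rho,\kappa}(\psi)=1$ $\nu$-a.e., i.e.\ \eqref{eq:pdEquationActive}, while \eqref{eq:pdEquationPsi} holds by construction. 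With $\psi=1-\sqrt{a_\nu}$ the Fenchel estimate above is an equality, and $\|\nu\|=\int\sqrt{a_\nu}\,\diff\rho$ makes the slack term $\int_\Omega(1-F_{\rho,\kappa}(\psi))\diff\nu$ vanish, so $J_{\rho,\kappa}(\nu)=\int_\Omega\psi\,\diff\rho$; hence $\text{\eqref{eq:primalr}}=\text{\eqref{eq:dualr}}$ and $\psi$ is a dual maximiser, giving (i) and existence in (ii).

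\textbf{Admissibility, uniqueness, and point (iii).} The one remaining point for admissibility of $\psi$ is strict feasibility $\psi<1$ on all of $\Omega$: off $\textup{supp}(\rho)$ one simply replaces $\psi$ by $\min\{\psi,1-\delta\}$, which changes neither $\int\psi\,\diff\rho$ nor $F_{\rho,\kappa}(\psi)$, so what is really needed is that $a_\nu$ is bounded away from $0$ on $\textup{supp}(\rho)$. I expect this to be \emph{the hard part}, and to handle it by a perturbation argument: adding to $\nu$ a small multiple $t$ of the restriction of $\rho$ to $\{a_\nu=0\}$ produces a gain of order $\sqrt t$ against a cost of order $t$, whence $\rho(\{a_\nu=0\})=0$; a localised version of the same perturbation at a candidate degenerate point $x_0\in\textup{supp}(\rho)$, where the difference quotient of $J_{\rho,\kappa}$ would then diverge to $-\infty$, rules out $a_\nu(x_0)=0$ altogether. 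Finally, for point (iii): if $(\nu,\psi)$ is admissible and \eqref{eq:pdEquation} holds, then \eqref{eq:pdEquationActive} integrated against $\nu$ gives $\|\nu\|=\int\sqrt{a_\nu}\,\diff\rho$, hence $J_{\rho,\kappa}(\nu)=\int\psi\,\diff\rho$ by the computation above, so both are optimal; conversely, optimality of an admissible pair forces both inequalities in the weak-duality chain to be tight, and tightness of the Fenchel step is exactly \eqref{eq:pdEquationPsi} (since $1-\sqrt{a_\nu(x)}$ is the \emph{unique} maximiser when $a_\nu(x)>0$, while when $a_\nu(x)=0$ the bound is strict because $\psi(x)<1$), and tightness of the last step, given $F_{\rho,\kappa}(\psi)\le1$, is \eqref{eq:pdEquationActive}. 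Since \eqref{eq:pdEquationPsi} pins every dual maximiser to $1-\sqrt{a_\nu}$ up to a $\rho$-null set for a fixed primal minimiser $\nu$, and these functions are continuous, the maximiser is unique on $\textup{supp}(\rho)$.
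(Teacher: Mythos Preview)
Your overall strategy coincides with the paper's: derive first–order optimality conditions for a primal minimiser $\nu$, build the dual candidate $\psi=1-\sqrt{a_\nu}$, and read off \eqref{eq:pdEquation} and uniqueness from the equality cases of the weak–duality chain based on Proposition~\ref{prop:HKdirac}. The one structural difference is that the paper uses Proposition~\ref{prop:dualExistence} to obtain $(\mathcal{P}_{\Lambda_\rho,\kappa})=(\mathcal{D}_{\Lambda_\rho,\kappa})\le\text{\eqref{eq:dualr}}$ and then closes the sandwich with the pointwise estimate \eqref{eq:inequalityIIpsi}, whereas you rely solely on the explicit optimal pair; your self–contained route is perfectly fine and arguably cleaner.

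You are right to single out strict feasibility $\psi<1$ as the delicate point---the paper simply asserts that $\psi_\kappa$ is ``dual admissible'' once $F_{\rho,\kappa}(\psi_\kappa)\le1$, without checking $\psi_\kappa<1$. Your first perturbation (adding $t\,\rho\!\restriction_{\{a_\nu=0\}}$) correctly gives $\rho(\{a_\nu=0\})=0$. However, the proposed ``localised version'' does not work as stated. Once $\rho(\{a_\nu=0\})=0$, the one–sided derivative of $t\mapsto J_{\rho,\kappa}(\nu+t\delta_{x_0})$ at $t=0^+$ exists by monotone convergence (the difference quotient is monotone in $t$ by concavity of $\sqrt{\,\cdot\,}$) and equals $1-F_{\rho,\kappa}(\psi)(x_0)$; but your own first–order condition already forces this to be $\ge0$, so nothing ``diverges to $-\infty$'' and the perturbation cannot exclude $a_\nu(x_0)=0$ at an isolated point $x_0\in\textup{supp}(\rho)$. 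If such an $x_0$ exists, no continuous $\psi'<1$ can agree with $1-\sqrt{a_\nu}$ $\rho$-a.e.\ (continuity would force $\psi'(x_0)=1$), so a genuine maximiser in the sense of \eqref{eq:dualr} would fail to exist. To make the argument go through you would need either an additional estimate tying the decay of $\rho$ near $x_0$ to that of $a_\nu$ (via the integrability constraint $\int\Cos^2(|x-x_0|/\kappa)/\sqrt{a_\nu(x)}\,\diff\rho(x)\le1$), or to relax the dual feasibility to $\psi\le1$ with the convention $1/0=+\infty$ in $F_{\rho,\kappa}$; the paper does neither, so this gap is shared.
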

	
	\begin{proof}
		\emph{Step 1. Primal optimality conditions.} Let $\nu \in \measp(\Omega)$ be an optimizer of \eqref{eq:primalr} and consider any non-negative measure $\tilde\nu \in \measp(\Omega)$. By optimality of $\nu$, one has
		\[
		\frac{\diff}{\diff t} J_{\rho, \kappa}(\nu + t\tilde\nu) |_{t=0^+} \geq 0.
		\]
		Using \eqref{eq:primalrDirect}, we compute
		\begin{align}
			0 &\leq
			\frac{\diff}{\diff t} J_{\rho, \kappa}(\nu + t\tilde\nu)|_{t=0^+}
			= 
			\frac{\diff}{\diff t} \left.\left( 1 + \|\nu\| + t\|\tilde\nu\| -2 \int_{\Omega} \sqrt{\int_\Omega \Cos^2(|x-y|/\kappa)\,\diff(\nu + t\tilde\nu)(y)}\,\diff\rho(x) \right)\right|_{t=0^+}
			\nonumber
			\\
			&= \int_\Omega \left[ 1 - \int_\Omega \frac{\Cos^2(|x-y|/\kappa)}{\sqrt{\int_\Omega \Cos^2(|x-z|/\kappa)\,\diff\nu(z)}}\,\diff\rho(x) \right] \,\diff \tilde\nu(y).
			\nonumber
		\end{align}
		Since $\tilde\nu$ is an arbitrary non-negative measure, we conclude that, for any optimal $\nu$,
		\begin{equation}\label{eq:optimalityI}
			\int_\Omega \frac{\Cos^2(|x-y|/\kappa)}{\sqrt{\int_\Omega \Cos^2(|x-z|/\kappa)\,\diff\nu(z)}}\,\diff\rho(x) \leq 1 \quad \tn{for all } y \in \Omega.
		\end{equation}
		Now consider as variation $\tilde\nu=-\nu$ such that $\nu+t \tilde\nu \geq 0$ for $t \in[0,1]$. As above, we find
		\begin{align*}
			0 \leq
			\frac{\diff}{\diff t} J_{\rho, \kappa}(\nu + t\tilde\nu)|_{t=0^+}
			= 
			-\int_\Omega \left[ 1 - \int_\Omega \frac{\Cos^2(|x-y|/\kappa)}{\sqrt{\int_\Omega \Cos^2(|x-z|/\kappa)\,\diff\nu(z)}}\,\diff\rho(x) \right] \,\diff \nu(y).
		\end{align*}
		Since from above we know that the expression in squared brackets must be non-negative, we now deduce that
		\begin{equation}\label{eq:optimalityII}
			\int_\Omega \frac{\Cos^2(|x-y|/\kappa)}{\sqrt{\int_\Omega \Cos^2(|x-z|/\kappa)\,\diff\nu(z)}}\,\diff\rho(x) = 1 \quad \tn{for }\nu\tn{-a.e. } y \in \Omega.
		\end{equation}
		
		\medskip
		\noindent
		\emph{Step 2. Duality}. By Proposition \ref{prop:dualExistence} as applied to $\Lambda_\rho$, we have \eqref{eq:primalr} $ = (\mathcal{P}_{\Lambda_\rho, \kappa}) = (\mathcal{D}_{\Lambda_\rho, \kappa})$. The latter problem, taking into account the definition of $\Lambda_\rho$, reduces to
		\begin{multline}
			\label{eq:lambdarhodual}
			\sup \Bigg\{ 
			\int_\Omega \Psi(\delta_x,x) \,\diff\rho(x)
			\,\Bigg|\, \Psi, \Phi \in \c(\setm \times \Omega), (\Psi(\mu,\cdot), \Phi(\mu,\cdot)) \in Q_\kappa \tn{ for all } \mu \in \setm, \\
			\text{ and } \int_\Omega \Phi(\delta_x,y) \,\diff\rho(x) \geq 0 \text{ for all } y \in \Omega\Bigg\}
			\tag{$\mathcal{D}_{\Lambda_\rho,\kappa}$}
		\end{multline}
		We now show that \eqref{eq:lambdarhodual} $ \leq $ \eqref{eq:dualr}. Let $\Psi, \Phi \in \c(\setm \times \Omega)$ be any two admissible functions for \eqref{eq:lambdarhodual}. Define $\psi\colon \Omega \to \R$ as $\psi(x) \assign \Psi(\delta_x, x)$. Then $\psi \in \c(\Omega)$. Since $(\Psi(\mu,\cdot), \Phi(\mu,\cdot)) \in Q_\kappa$ for all $\mu \in \setm$, one has $\psi(x) = \Psi(\delta_x,x) < 1$ for all $x \in \Omega$ and in particular
		\[
		(1-\Psi(\delta_x,x))(1-\Phi(\delta_x,y)) \geq \Cos(|x-y|/\kappa) \quad \tn{for all } x,y \in \Omega,
		\]
		so that
		\[
		\frac{\Cos(|x-y|/\kappa)}{1-\psi(x)} \leq 1 - \Phi(\delta_x,y) \tn{ for all } x,y \in \Omega.
		\]
		Integrating against $\rho$ and using that $\|\rho\| = 1$, we get
		\[
		F_{\rho,\kappa}(\psi)(y) = \int_\Omega \frac{\Cos(|x-y|/\kappa)}{1-\psi(x)} \,\diff\rho(x) \leq 1 - \int_\Omega \Phi(\delta_x,y) \,\diff\rho(x) \leq 1 \quad \tn{for all } y \in \Omega.
		\]
		Hence $\psi$ is admissible for \eqref{eq:dualr} and \eqref{eq:lambdarhodual} $ \leq $ \eqref{eq:dualr}. Let now $\psi \in \c(\Omega)$, $\psi < 1$, be admissible for \eqref{eq:dualr} and let $\nu \in \measp(\Omega)$ be any optimizer of \eqref{eq:primalr}. Then, since $F_{\rho, \kappa}(\psi) \leq 1$, one obtains
		\begin{align}
			\int_\Omega \psi(x)\,\diff \rho(x)
			\nonumber 
			&\leq
			\int_\Omega \psi(x)\,\diff \rho(x) + \int_\Omega \left[ 1 - \int_{\Omega} \frac{\Cos^2(|x-y|/\kappa)}{1-\psi(x)} \,\diff\rho(x)  \right]\,\diff\nu(y)
			\nonumber \\
			&= \int_\Omega \left[ \psi(x) + \|\nu\| - \frac{1}{1-\psi(x)} \int_{\Omega} \Cos^2(|x-y|/\kappa) \,\diff\nu(y)  \right]\,\diff\rho(x)
			\nonumber \\
			&\leq
			\int_\Omega \sup_{\xi < 1} \left[ \xi + \|\nu\| - \frac{1}{1-\xi} \int_{\Omega} \Cos^2(|x-y|/\kappa) \,\diff\nu(y)  \right]\,\diff\rho(x)
			\nonumber \\
			&=
			\int_\Omega \HK_\kappa^2(\delta_x, \nu)\,\diff\rho(x) = \eqref{eq:primalr},
			\label{eq:inequalityIIpsi}
		\end{align}
		where the first equality in the last line follows by Proposition \ref{prop:HKdirac}. All in all, we showed \eqref{eq:primalr} $ = (\mathcal{P}_{\Lambda_\rho, \kappa}) = (\mathcal{D}_{\Lambda_\rho, \kappa}) \leq $ \eqref{eq:dualr} $ \leq $ \eqref{eq:primalr}, hence \eqref{eq:dualr} is a dual to \eqref{eq:primalr} and \eqref{eq:primalr} $ = $ \eqref{eq:dualr}.
		
		\medskip
		\noindent
		\emph{Step 3. Existence and characterization of the dual optimizer.} Fix a primal optimizer $\nu_\kappa$ and define $\psi_\kappa \in \c(\Omega)$ as
		\begin{equation}\label{eq:pdoptimalityLocal}
			(1-\psi_\kappa(x))^2 = \int_\Omega \Cos^2(|x-y|/\kappa)\,\diff\nu_\kappa(y).
		\end{equation}
		By the optimality condition \eqref{eq:optimalityI} we have $F_{\rho, \kappa}(\psi_\kappa) \leq 1$, so that $\psi_\kappa$ is dual admissible. Furthermore, thanks to \eqref{eq:optimalityII}, we have $F_{\rho, \kappa}(\psi_\kappa) = 1$ $\nu_\kappa$-almost everywhere. Thus, for $\nu = \nu_\kappa$ and $\psi = \psi_\kappa$, the chain of inequalities \eqref{eq:inequalityIIpsi} becomes a chain of equalities and $\psi_\kappa$ provides an optimizer for \eqref{eq:dualr}. In particular, from \eqref{eq:inequalityIIpsi} we also deduce that
		\begin{itemize}
			\item any optimal $\psi$ has to satisfy
			\[
			(1-\psi(x))^2 = \int_\Omega \Cos^2(|x-y|/\kappa)\,\diff\nu(y) \quad \tn{ for } \rho \tn{-a.e. } x \in \Omega, \tn{ for any primal optimizer } \nu,
			\]
			in order to have an equality between the second and third line (and recall that feasible $\psi$ must be $< 1$),
			\item for any optimal $\psi$ we have
			\[
			F_{\rho, \kappa}(\psi)(y) = 1 \tn{ for } \nu\tn{-a.e. } y \in \Omega, \tn{ for any primal optimizer } \nu,
			\]
			in order to have an equality in the first line.
		\end{itemize}
		Hence, \emph{(ii)} follows. Assume now $(\nu, \psi) \in \measp(\Omega) \times \c(\Omega)$ is an admissible couple such that \eqref{eq:pdEquation} holds. The same derivation as in \eqref{eq:inequalityIIpsi} provides
		\[
		\int_\Omega \psi(x)\,\diff\rho(x) = \int_\Omega \HK_\kappa^2(\delta_x, \nu)\,\diff\rho(x),
		\]
		which holds if and only if $\nu$ is a primal minimizer and $\psi$ is a dual maximizer, thus completing the proof.
	\end{proof}

	\begin{corollary}
		The constraint functions $F_{\rho,\kappa}(\psi)$ for any dual optimizer $\psi$ are identical.
	\end{corollary}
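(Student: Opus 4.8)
The plan is to obtain this as an immediate consequence of the uniqueness part of Proposition~\ref{prop:DualityR}. First I would pick two maximizers $\psi_1,\psi_2 \in \c(\Omega)$ of \eqref{eq:dualr}. By Proposition~\ref{prop:DualityR}(ii) any dual maximizer is uniquely determined on the support of $\rho$, so $\psi_1$ and $\psi_2$ coincide there. Since the complement of the support of $\rho$ is a $\rho$-null set, this gives $\psi_1 = \psi_2$ $\rho$-almost everywhere on $\Omega$.

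Next I would observe that, for each fixed $y \in \Omega$, the constraint value $F_{\rho,\kappa}(\psi)(y) = \int_\Omega \Cos^2(|x-y|/\kappa)/(1-\psi(x))\,\diff\rho(x)$ is an integral with respect to $\rho$ of an integrand that depends on $\psi$ only through its pointwise values $x \mapsto \psi(x)$. This integrand is well defined and $\rho$-integrable: feasibility for \eqref{eq:dualr} forces $\psi < 1$ on the compact set $\Omega$, hence $1-\psi \geq \varepsilon$ for some $\varepsilon > 0$, and $\Cos^2$ is bounded. Therefore, replacing $\psi$ by a function that agrees with it $\rho$-a.e.\ leaves the integral unchanged; applying this with $\psi = \psi_1$ and $\psi = \psi_2$ yields $F_{\rho,\kappa}(\psi_1)(y) = F_{\rho,\kappa}(\psi_2)(y)$. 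Since $y \in \Omega$ was arbitrary, the two constraint functions are identical on all of $\Omega$.

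There is essentially no obstacle here; the only point worth stating explicitly is that the conclusion is pointwise in $y$, not merely an a.e.\ identity, which is immediate because the argument above is carried out separately for each fixed $y$. One may optionally add the remark that each $F_{\rho,\kappa}(\psi)$ is in fact continuous on $\Omega$ (as $\Cos^2(|\cdot-y|/\kappa)$ depends uniformly continuously on $y$ and $1/(1-\psi)$ is bounded), so that the common value is a well-defined element of $\c(\Omega)$, but this is not needed for the statement itself.
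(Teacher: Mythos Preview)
Your argument is correct and follows essentially the same route as the paper: both use that dual maximizers coincide $\rho$-a.e.\ (Proposition~\ref{prop:DualityR}(ii)) and that $F_{\rho,\kappa}(\psi)(y)$ is an integral against $\rho$, hence depends only on the $\rho$-a.e.\ equivalence class of $\psi$. Your write-up just makes explicit the well-definedness of the integrand and the pointwise-in-$y$ nature of the conclusion, which the paper leaves implicit.
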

	\begin{proof}
		This follows from the fact that all dual maximizers agree $\rho$-a.e., see \eqref{eq:pdEquationPsi}, and the constraint function only evaluates $\psi$ on this set.
	\end{proof}
	
	Similar to the primal case (see Sections \ref{sec:ExistenceStability} and \ref{sec:Scaling}) dual maximizers and the constraint function are stable under small perturbations in $\rho$ and $\kappa$.
	\begin{proposition}[Dual stability]
		\label{prop:DualStability}
		Let $(\kappa_n)_n$ and $(\rho_n)_n$ be convergent (weak* in the latter case) sequences in $(0,\infty)$ and $\prob(\Omega)$, with limits $\kappa_\infty \in (0,\infty)$ and $\rho_\infty \in \prob(\Omega)$, respectively.
		Let $(\nu_n)_n$ be a corresponding sequence of primal minimizers and set
		\begin{align*}
			\psi_n(x) & \assign 1- \sqrt{\int_\Omega \Cos^2(|x-y|/\kappa_n)\,\diff \nu_n(y)}, &
			\psi_\infty(x) & \assign 1- \sqrt{\int_\Omega \Cos^2(|x-y|/\kappa_\infty)\,\diff \nu_\infty(y)},
		\end{align*}
		where $\nu_\infty$ is some cluster point of $(\nu_n)_n$.
		Then the latter is a dual maximizer of the limit problem for $\kappa_\infty$ and $\rho_\infty$, the sequence $(\psi_n)_n$ converges uniformly to $\psi_\infty$ on the support of $\rho_\infty$, and the sequence of constraint functions $(F_{\rho_n,\kappa_n}(\psi_n))_n$ converges uniformly to $F_{\rho_\infty,\kappa_\infty}(\psi_\infty)$ on $\Omega$.
	\end{proposition}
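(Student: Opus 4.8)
The plan is to reduce the statement to the stability machinery of Section~\ref{sec:ExistenceStability} together with the explicit primal--dual description of Proposition~\ref{prop:DualityR}, in four steps of which only the last is delicate. \emph{Step 1 (the cluster point solves the limit problem).} One first shows that $\nu_\infty$ minimizes $(\mathcal{P}_{\rho_\infty,\kappa_\infty})$ and that $J_{\rho_n,\kappa_n}(\nu_n)\to J_{\rho_\infty,\kappa_\infty}(\nu_\infty)$. As $\kappa_\infty\in(0,\infty)$ this is a \emph{joint} stability statement in $\rho$ and $\kappa$: following Remark~\ref{rem:JointStability} one absorbs the change in $\kappa$ into a change of the input law by rescaling $\Omega$ (Remark~\ref{rem:HKKappaRescaling}) and relocating the Diracs, thereby reducing to a single weak* convergence $\Lambda_n\weakstar\Lambda_\infty$ in $\prob(\setm)$ and applying Corollary~\ref{cor:minimizersconvergence}; alternatively, one argues directly on \eqref{eq:primalrDirect}, using that $(x,y,\kappa)\mapsto\Cos^2(|x-y|/\kappa)$ is bounded and uniformly continuous on the compact set $\Omega^2\times[\kappa_\infty/2,2\kappa_\infty]$, so that $(\rho,\kappa,\nu)\mapsto J_{\rho,\kappa}(\nu)$ is jointly continuous along the given convergences and the usual lower-bound/recovery argument applies. \emph{Step 2 ($\psi_\infty$ is dual optimal).} By Proposition~\ref{prop:DualityR}(ii) applied to $(\rho_\infty,\kappa_\infty)$ and $\nu_\infty$, the function $\psi_\infty$ satisfies \eqref{eq:pdEquation} and is a dual maximizer of $(\mathcal{D}_{\rho_\infty,\kappa_\infty})$. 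Writing $g_\infty\assign(1-\psi_\infty)^2=\int_\Omega\Cos^2(|\cdot-y|/\kappa_\infty)\,\diff\nu_\infty(y)$, which is continuous, we note that $\psi_\infty$ agrees $\rho_\infty$-a.e.\ with a continuous function that is $<1$ everywhere, hence they coincide on $\operatorname{supp}\rho_\infty$; therefore $g_\infty\ge c_\infty>0$ on the compact set $\operatorname{supp}\rho_\infty$.

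\emph{Step 3 (uniform convergence of $\psi_n$).} Set $g_n(x)\assign\int_\Omega\Cos^2(|x-y|/\kappa_n)\,\diff\nu_n(y)$. Splitting $g_n-g_\infty$ as $\int_\Omega[\Cos^2(|x-y|/\kappa_n)-\Cos^2(|x-y|/\kappa_\infty)]\,\diff\nu_n(y)$, which is $\le\|\nu_n\|\cdot\sup_{\Omega^2}|\Cos^2(\cdot/\kappa_n)-\Cos^2(\cdot/\kappa_\infty)|\to0$ by uniform continuity and the uniform bound on $\|\nu_n\|$, plus $\int_\Omega\Cos^2(|x-y|/\kappa_\infty)\,\diff(\nu_n-\nu_\infty)(y)$, which tends to $0$ uniformly in $x$ because $\{y\mapsto\Cos^2(|x-y|/\kappa_\infty):x\in\Omega\}$ is equicontinuous and uniformly bounded, hence relatively compact in $\c(\Omega)$, and weak* convergence is uniform over such sets (as in \eqref{eq:HKUniformConv}), one gets $g_n\to g_\infty$ uniformly on $\Omega$; since $s\mapsto\sqrt s$ is uniformly continuous on bounded intervals, $\psi_n=1-\sqrt{g_n}\to1-\sqrt{g_\infty}=\psi_\infty$ uniformly on $\Omega$, and in particular on $\operatorname{supp}\rho_\infty$.

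\emph{Step 4 (uniform convergence of the constraint functions), the main obstacle.} Fix a bounded open neighborhood $U$ of $\operatorname{supp}\rho_\infty$ with $g_\infty\ge c_\infty/2$ on $\overline U$; by Step~3, $1-\psi_n\ge c_\infty/4$ on $\overline U$ for all large $n$, so on $\overline U$ the integrands $\Cos^2(|x-y|/\kappa_n)/(1-\psi_n(x))$ are uniformly bounded and converge uniformly in $(x,y)\in\overline U\times\Omega$ to $\Cos^2(|x-y|/\kappa_\infty)/(1-\psi_\infty(x))$; testing $\rho_n\weakstar\rho_\infty$ against the corresponding relatively compact family of continuous extensions then yields $\int_U\tfrac{\Cos^2(|x-y|/\kappa_n)}{1-\psi_n(x)}\,\diff\rho_n(x)\to\int_U\tfrac{\Cos^2(|x-y|/\kappa_\infty)}{1-\psi_\infty(x)}\,\diff\rho_\infty(x)$ uniformly in $y$. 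The difficulty is the tail $\int_{\Omega\setminus U}\tfrac{\Cos^2(|x-y|/\kappa_n)}{1-\psi_n(x)}\,\diff\rho_n(x)$: although $\rho_n(\Omega\setminus U)\to0$ by the Portmanteau theorem, the factor $1/(1-\psi_n)=1/\sqrt{g_n}$ need not be bounded off $U$, so one cannot conclude from weak* convergence alone but must exploit the optimality of $\nu_n$. The admissibility bound $F_{\rho_n,\kappa_n}(\psi_n)\le1$ of Proposition~\ref{prop:DualityR}, together with the confinement $\operatorname{supp}\nu_n\subseteq\{x:\operatorname{dist}(x,\operatorname{supp}\rho_n)<\kappa_n\pi/2\}$ (which follows from \eqref{eq:pdEquationActive}, since $\Cos^2(|x-y|/\kappa_n)$ vanishes once $\operatorname{dist}(y,\operatorname{supp}\rho_n)\ge\kappa_n\pi/2$), forces the $\rho_n$-mass located where $1-\psi_n$ is small to be correspondingly small, making the tail vanish uniformly---the estimate being most transparent for $y\in\operatorname{supp}\rho_\infty$, where only $x$ within $\kappa_\infty\pi/2$ of $\operatorname{supp}\rho_\infty$ contribute. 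Once this tail estimate is established, the uniform limit $F_{\rho_n,\kappa_n}(\psi_n)\to F_{\rho_\infty,\kappa_\infty}(\psi_\infty)$ follows and the proof is complete.
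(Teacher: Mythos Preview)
Steps~1 and~2 track the paper's argument. In Step~3 you tacitly use $\nu_n\weakstar\nu_\infty$ along the \emph{full} sequence, but $\nu_\infty$ is only given as a cluster point and primal minimizers need not be unique; your argument therefore proves $g_n\to g_\infty$ (and hence $\psi_n\to\psi_\infty$) uniformly on $\Omega$ only along the subsequence that realises $\nu_\infty$. To get the full sequence on $\spt\rho_\infty$ one must argue, as the paper does, that every subsequential limit of $(\psi_n)_n$ is a dual maximizer for the limit problem and that these all coincide on $\spt\rho_\infty$ by Proposition~\ref{prop:DualityR}(ii).

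The substantive gap is Step~4. You correctly isolate the tail over $\Omega\setminus U$ as the obstacle, but the mechanism you propose does not deliver: smallness of $1-\psi_n(x)=\sqrt{g_n(x)}$ only says that $\nu_n$ carries little mass near $x$, and your confinement of $\spt\nu_n$ near $\spt\rho_n$ says nothing about how much $\rho_n$-mass can sit where $1-\psi_n$ is small. A single pointwise constraint $F_{\rho_n,\kappa_n}(\psi_n)(y)\le1$ is also insufficient, because $\Cos^2(|\cdot-y|/\kappa_n)$ only ``sees'' a ball of radius $\kappa_n\pi/2$ and cannot probe all of $\Omega\setminus U$ at once. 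The paper avoids the $U$ versus $\Omega\setminus U$ split altogether by a short covering argument: since $\inf_n\kappa_n>0$ and $\Omega$ is compact, there is a \emph{finite} set $Y\subset\Omega$, independent of $n$, with $\sum_{y\in Y}\Cos^2(|x-y|/\kappa_n)\ge1$ for all $x\in\Omega$ and all $n$; summing the dual constraints over $y\in Y$ yields
\[
\int_\Omega\frac{\diff\rho_n(x)}{1-\psi_n(x)}\;\le\;\sum_{y\in Y}F_{\rho_n,\kappa_n}(\psi_n)(y)\;\le\;|Y|.
\]
Thus the measures $(1-\psi_n)^{-1}\rho_n$ form a bounded, hence weak* pre-compact, family, and equicontinuity of $(x,y,n)\mapsto\Cos^2(|x-y|/\kappa_n)$ then gives uniform pre-compactness of the constraint functions directly, with all cluster points identified as $F_{\rho_\infty,\kappa_\infty}(\psi_\infty)$ via the $\rho_\infty$-a.e.\ uniqueness of the dual maximizer. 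This covering bound is precisely the quantitative input your intuition is reaching for, but it is exploited globally (to obtain compactness of the weighted measures) rather than locally on $\Omega\setminus U$ as you attempt.
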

	While the primal minimizer might not always be unique, the set where $F_{\rho,\kappa}(\psi)=1$ for a dual maximizer $\psi$ is unique and stable under small perturbations in $\rho$ and $\kappa$. Since one must have $F_{\rho,\kappa}(\psi)(y)=1$ for $\nu$-almost all $y$, the set where $F_{\rho,\kappa}(\psi)$ is (close to) 1 therefore provides an alternative and unique interpretation of clustering.
	
	\begin{proof}
		By Corollary \ref{cor:minimizersconvergence} the sequence $(\nu_n)_n$ is weak* pre-compact and any cluster point is a minimizer of the primal limit problem, see Remark \ref{rem:JointStability} for the incorporation of a sequence of changing $(\kappa_n)_n$, with limit in $(0,\infty)$. By Proposition \ref{prop:DualityR} a dual maximizer for the limit problem is then given through \eqref{eq:pdEquationPsi}, which gives dual optimality of $\psi_\infty$.
		
		Since the family of functions $(y \mapsto \Cos^2(|x-y|/\kappa_n))_{x \in \Omega,n}$ is uniformly equicontinuous, so are the $(\psi_n)_n$ and one has that the subsequence of functions $(\psi_{n_k})_k$ is uniformly convergent for every weak* convergent subsequence $(\nu_{n_k})_k$ of $(\nu_n)_n$ and each limit must be a dual maximizer. Since the limit dual maximizer is unique on the support of $\rho_\infty$, all cluster points of $(\psi_n)_n$ must agree on this set (and no other cluster points can exist, e.g.~since all cluster points $(\nu_n)_n$ are primal minimizers).
		
		Finally, let us consider the sequence of constraint functions.
		For brevity set $F_n \assign F_{\rho_n,\kappa_n}(\psi_n)$. By assumption the sequence $(\kappa_n)_n$ is bounded away from zero, so there exists a finite set $Y \subset \Omega$ such that $\sum_{y \in Y} \Cos(|x-y|^2/\kappa_n) \geq 1$ for all $x \in \Omega$ and $n$. Then, with $F_n(y) \leq 1$ for all $y \in \Omega$, $n \in \N$ it follows that
		\begin{align*}
			\int_\Omega \frac{1}{1-\psi_n(x)}\diff \rho_n(x) \leq 
			\sum_{y \in Y} \int_\Omega \frac{\Cos(|x-y|^2/\kappa_n)}{1-\psi_n(x)}\diff \rho_n(x)
			= \sum_{y \in Y} F_n(y) \leq |Y|.
		\end{align*}
		$((1-\psi_n)^{-1} \cdot \rho_n)_n$ is therefore a sequence of bounded non-negative measures on $\Omega$ and thus weak* pre-compact. Again, by equicontinuity of the $(y \mapsto \Cos^2(|x-y|/\kappa_n))_{x,n}$ follows the pre-compactness of the sequence $(F_n)_n$ for the uniform convergence. Let now $(n_k)_k$ be a subsequence such that $((1-\psi_{n_k})^{-1} \cdot \rho_{n_k})_k$ converges weak*, let $F_\infty$ be the limit of $(F_{n_k})_k$. We find that $F_{n_k}(y)$ converges pointwise to $F_{\rho_\infty,\kappa_\infty}(\psi_\infty)(y)$ for all $y$ and thus we must have that $F_\infty=F_{\rho_\infty,\kappa_\infty}(\psi_\infty)$. Since the latter only depends on the value of $\psi_\infty$ on the support of $\rho_\infty$ and $\psi_\infty$ is unique on this support, this means that all cluster points $F_\infty$ must be identical.
	\end{proof}
	
	\subsection{Discrete and diffuse barycenters}
	\label{sec:DiscreteDiffuse}
	
	In \cite{friesecke2019barycenters} it was observed that the HK barycenter between a finite number of Dirac measures was sometimes discrete and sometimes diffuse. In the latter case it was shown that the solution is non-unique and that a discrete solution also exists \cite[Proposition 6.2]{friesecke2019barycenters}.
	In this Section we give an alternative proof for this result.
	Then we turn to the question of existence of discrete solutions for a diffuse $\rho$ and provide a negative answer: sometimes no discrete minimizers exist. For illustration we also briefly discuss an example on the torus.
	
	\begin{proposition}[Discrete barycenters for finite number of Dirac input measures]
		\label{prop:DiscreteBar}
		Let $\rho\assign \sum_{i=1}^n m_i\,\delta_{x_i}$ for $n \in \N$, $m = (m_1,\ldots,m_n) \in \R_+^n$, $\sum_{i=1}^n m_i=1$ and $x_1,\ldots,x_n \in \Omega$. Then, \eqref{eq:primalr} has a minimizer $\nu$ of the form
		\begin{align*}
			\nu = \sum_{i=1}^k \tilde{m}_i\,\delta_{\tilde{x}_i}
		\end{align*}
		for a positive integer $k \leq n$, non-negative mass coefficients $\tilde{m}_1,\ldots,\tilde{m}_k$ and positions $\tilde{x}_1,\ldots,\tilde{x}_k \in \Omega$.
	\end{proposition}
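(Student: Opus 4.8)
The plan is to start from an arbitrary primal minimizer $\nu$ of \eqref{eq:primalr} — which exists by the Corollary following Proposition~\ref{prop:HKdirac} — and then replace it by a finitely supported measure without increasing the objective. The key simplification comes from observing that, by the form \eqref{eq:primalrDirect}, the objective $J_{\rho,\kappa}(\nu)$ with $\rho = \sum_{i=1}^n m_i \delta_{x_i}$ depends on $\nu$ only through the $n+1$ numbers
\begin{align*}
	\|\nu\| \qquad \text{and} \qquad a_i(\nu) \assign \int_\Omega \Cos^2(|x_i - y|/\kappa)\,\diff\nu(y), \quad i = 1,\ldots,n,
\end{align*}
namely $J_{\rho,\kappa}(\nu) = 1 + \|\nu\| - 2\sum_{i=1}^n m_i\sqrt{a_i(\nu)}$. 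So it suffices to find a measure $\nu'$ supported on at most $k \le n$ points with $\|\nu'\| = \|\nu\|$ and $a_i(\nu') = a_i(\nu)$ for all $i$; such a $\nu'$ has the same objective value and is therefore also a minimizer.

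The natural tool is a moment/Carathéodory argument. Consider the map $\Phi \colon \Omega \to \R^{n+1}$, $y \mapsto (1, \Cos^2(|x_1-y|/\kappa), \ldots, \Cos^2(|x_n-y|/\kappa))$, which is continuous and bounded. Then the vector $v \assign (\|\nu\|, a_1(\nu), \ldots, a_n(\nu)) = \int_\Omega \Phi(y)\,\diff\nu(y)$ lies in the closed convex cone generated by $\Phi(\Omega)$ (equivalently, in $\|\nu\|$ times the closed convex hull of $\{\Phi(y)/\text{(first coord.)}\}$, but it is cleaner to work with the cone directly). Since $\Phi(\Omega)$ is a compact subset of $\R^{n+1}$, the conical version of Carathéodory's theorem guarantees that $v$ can be written as a non-negative combination of at most $n+1$ elements of $\Phi(\Omega)$, i.e.\ $v = \sum_{j=1}^{n+1} c_j\,\Phi(\tilde x_j)$ with $c_j \ge 0$ and $\tilde x_j \in \Omega$. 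Setting $\nu' \assign \sum_{j=1}^{n+1} c_j\,\delta_{\tilde x_j}$ gives $\int_\Omega \Phi\,\diff\nu' = v$, hence $\nu' \in \measp(\Omega)$ reproduces all the relevant quantities and $J_{\rho,\kappa}(\nu') = J_{\rho,\kappa}(\nu)$, so $\nu'$ is a minimizer supported on at most $n+1$ points.

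It remains to improve the bound from $n+1$ to $n$. Here one can exploit the primal–dual optimality condition \eqref{eq:pdEquationActive}: any minimizer $\nu'$ must be supported on the set $\{y : F_{\rho,\kappa}(\psi)(y) = 1\}$, where $F_{\rho,\kappa}(\psi)(y) = \sum_{i=1}^n m_i\,\Cos^2(|x_i-y|/\kappa)/(1-\psi(x_i))$ is a non-negative combination of the same $n$ functions $\Cos^2(|x_i-\cdot|/\kappa)$. So in fact $v' \assign (a_1(\nu'),\ldots,a_n(\nu'))$ already satisfies the one linear relation $\sum_i \frac{m_i}{1-\psi(x_i)} a_i(\nu') = 1$ coming from integrating \eqref{eq:pdEquationActive} against $\nu'$, and more robustly one can just note that once the constraint is active on the support of $\nu'$, the pair $(\|\nu'\|, v')$ ranges over an affine slice and an $n$-point Carathéodory decomposition suffices. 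Concretely, after knowing a finitely supported minimizer $\nu' = \sum_{j} c_j \delta_{\tilde x_j}$ exists, I would re-run the Carathéodory argument on the points $\Phi(\tilde x_j)$ subject additionally to the active-constraint equality, which cuts the ambient dimension by one and yields $k \le n$ support points with non-negative masses $\tilde m_j$. I expect this last reduction step — getting the sharp bound $k \le n$ rather than $n+1$ — to be the main technical obstacle; one must be careful that the extra linear constraint is genuinely satisfied by the decomposing atoms (it is, since it only involves the coordinates being reproduced) and that discarding a redundant atom keeps all masses non-negative, which is exactly what the Carathéodory/exposed-face argument delivers.
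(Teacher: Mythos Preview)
Your argument is correct and shares the paper's core idea---$J_{\rho,\kappa}(\nu)$ depends on $\nu$ only through finitely many linear moments, so a Carath\'eodory reduction applies, with the primal--dual conditions of Proposition~\ref{prop:DualityR} certifying optimality---but the execution differs. The paper matches only the $n$ quantities $a_i(\nu)=(1-\psi(x_i))^2$ while insisting that all atoms stay inside $\spt\nu$: it takes a discrete approximation $\nu^s\weakstar\nu$ with $\spt\nu^s\subset\spt\nu$, uses a rank argument on the matrix $A^s_{ij}=\Cos^2(|x_i-x_j^s|/\kappa)$ to cut each $\nu^s$ down to $\le n$ atoms, and passes to a cluster point $\nu^n$; since $\spt\nu^n\subset\spt\nu$, the condition $F_{\rho,\kappa}(\psi)=1$ holds on $\spt\nu^n$ automatically and optimality follows from point~\eqref{point:optimalPair}. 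You instead match the $n{+}1$ quantities $(\|\nu\|,a_1,\ldots,a_n)$ directly by Carath\'eodory over $\Phi(\Omega)$---no approximation or compactness needed---and then reduce $n{+}1$ to $n$ by observing that the atoms of the resulting minimizer lie in $\{F_{\rho,\kappa}(\psi)=1\}$, which forces all $\Phi(\tilde x_j)$ with $c_j>0$ into a fixed $n$-dimensional linear subspace of $\R^{n+1}$, where conical Carath\'eodory runs once more. Your reduction step is right; the reason each atom satisfies the extra linear relation is precisely that $\nu'$ is already optimal, so \eqref{eq:pdEquationActive} applies pointwise at each $\tilde x_j$ with $c_j>0$ (your parenthetical ``since it only involves the coordinates being reproduced'' slightly undersells this). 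Net comparison: the paper gets $k\le n$ in one pass at the cost of the approximation/compactness detour; you avoid that detour at the cost of a second Carath\'eodory pass.
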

	\begin{proof}
		Let $\nu \in \measp(\Omega)$ be a minimizer of \eqref{eq:primalr} and let $\psi \in \c(\Omega)$ be the optimal dual defined via \eqref{eq:pdEquation}. Since $\psi$ is uniquely determined on $\spt \rho$, we can reduce our focus to a vector $\psi = (\psi_1,\dots,\psi_n) \in \R^n$ with entries defined as
		\[
		\psi_i = \int_\Omega \Cos^2(|x_i - y|/\kappa)\,\diff\nu(y) \quad \tn{for all } i = 1,\dots,n.
		\]
		Consider now a discrete approximating sequence $\{\nu^s\}_{s \in \N}$ for $\nu$, $\spt \nu^s \subset \spt \nu$, with
		\[
		\nu^s = \sum_{j=1}^s \bar{m}_j^s \delta_{x_j^s} \quad \tn{ for } \bar{m}^s = (\bar{m}_1^s,\ldots,\bar{m}_s^s) \in \R_+^s, \; x_1^s,\ldots,x_s^s \in \Omega,
		\]
		such that $\nu^s \weakstar \nu$ as $s \to \infty$. Each measure $\nu^s$ defines a vector $\psi^s \in \R^n$ by setting
		\[
		\psi_i^s \assign \int_\Omega \Cos^2(|x_i - y|/\kappa)\,\diff\nu^s(y)
		= \sum_{j=1}^s \Cos^2(|x_i - x_j^s|/\kappa)\,\bar{m}_j^s
		\quad \tn{ for all } i = 1,\dots,n.
		\]
		This can be written as $\psi^s=A^s \bar{m}^s$
		for a matrix $A^s \in \R^{n \times s}$ with entries $A^s_{ij} \assign \Cos^2(|x_i - x_j^s|/\kappa)$. Clearly we have $\textup{rank}(A^s) \leq n$, and so we can find a vector $\bar{m}^{s,n} \in \R^s_+$ with at most $n$ strictly positive entries such that $\psi^s = A^s \cdot \bar{m}^{s,n}$. In turn, this defines a discrete non-negative measure $\nu^{s,n}$ supported on at most $n$ points such that
		\[
		\psi_i^s = \int_\Omega \Cos^2(|x_i - y|/\kappa)\,\diff\nu^{s,n}(y) \quad \tn{for all } i = 1,\dots,n.
		\]
		By compactness, there exists a cluster point $\nu^n \in \measp(\Omega)$ such that, up to selection of a subsequence, $\nu^{s,n} \weakstar \nu^n$ and $\nu^n$ is supported on at most $n$ points because each measure $\nu^{s,n}$ is. Hence, using that $\nu^s \weakstar \nu$ and $\nu^{s,n} \weakstar \nu^n$ as $s \to \infty$ and that $A^s \bar{m}^{s} = A^s \bar{m}^{s,n}$, we obtain
		\begin{align*}
			\psi_i &= \int_\Omega \Cos^2(|x_i - y|/\kappa)\,\diff\nu(y) = \lim_{s \to \infty} \int_\Omega \Cos^2(|x_i - y|/\kappa)\,\diff\nu^{s}(y) \\
			&= \lim_{s \to \infty} \int_\Omega \Cos^2(|x_i - y|/\kappa)\,\diff\nu^{s,n}(y) = \int_\Omega \Cos^2(|x_i - y|/\kappa)\,\diff\nu^{n}(y).
		\end{align*}
		Since $F_{\rho, \kappa}(\psi)(y) = 1$ for every $y \in \spt\nu$, we also have $F_{\rho, \kappa}(\psi)(x_j^s) = 1$ for every $j = 1,\dots,s$, and any $s > 0$. In particular, when passing to the limit as $s \to \infty$, one observes that $F_{\rho, \kappa}(\psi)(y) = 1$ for every $y \in \spt\nu^n$. By Proposition \ref{prop:DualityR}, point \eqref{point:optimalPair}, $\nu^n$ is primal optimal and the result follows.
	\end{proof}

	In this manuscript $\Omega$ is a subset of $\R^d$ and equipped with the Euclidean distance. The Hellinger--Kantorovich distance can be defined for non-negative measures over more general metric spaces \cite{liero2018optimal} and in particular it can be shown that the barycenter problem between Dirac measures can be extended to the $d$-torus $\Tor^d=\R^d/\Z^d$. One merely has to replace any occurrence of $\Omega$ by $\Tor^d$ and the Euclidean distance $|x-y|$ by the geodesic distance $\dist$ on $\Tor^d$. We now show that on the torus it may happen that no discrete minimizer $\nu$ exists.
	
	\begin{proposition}[Diffuse barycenters on the torus]
		Let $d \in \N$, $\Tor^d\assign\R^d/\Z^d$ be the $d$-dimensional unit torus (with circumference 1 along each dimension), equipped with its geodesic distance $\dist$ and let $\rho \in \prob(\Tor^d)$ be the uniform probability measure on $\Tor^d$. Then for $d>1$ there is no discrete barycenter $\nu$. For $d=1$ there is no discrete barycenter when $\kappa \cdot \pi$ is irrational or $\kappa \cdot \pi > 1$, otherwise discrete barycenters exist.
	\end{proposition}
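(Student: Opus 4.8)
\emph{Step 1: reduction to a convolution identity.} Write $k(z):=\Cos^2(\dist(0,z)/\kappa)$ on $\Tor^d$ and $C_\kappa:=\int_{\Tor^d}k\,\diff z\in(0,1)$ (independent of the base point by homogeneity of the torus); by Proposition~\ref{prop:HKdirac} one has $J_{\rho,\kappa}(\nu)=1+\|\nu\|-2\int_{\Tor^d}\sqrt{(k*\nu)(x)}\,\diff\rho(x)$. Both the uniform measure $\rho$ and $J_{\rho,\kappa}$ are invariant under the translations of $\Tor^d$, and $J_{\rho,\kappa}$ is convex (being the sum of the linear term $\|\nu\|$ and the convex term $\nu\mapsto-2\int\sqrt{(k*\nu)(x)}\,\diff\rho(x)$). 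Hence, averaging any minimizer over all translations and applying Jensen's inequality, some minimizer has the form $c\rho$; optimizing $c\mapsto 1+c-2\sqrt{c\,C_\kappa}$ gives $c=C_\kappa$, so $C_\kappa\rho$ is a barycenter. By Proposition~\ref{prop:DualityR}, part~(ii), its dual maximizer---unique, since $\spt\rho=\Tor^d$---is the constant $\psi\equiv1-C_\kappa$, which makes $F_{\rho,\kappa}(\psi)\equiv1$ on all of $\Tor^d$. Inserting this $\psi$ into the optimality system \eqref{eq:pdEquation} and using point~\eqref{point:optimalPair} of Proposition~\ref{prop:DualityR} yields the clean characterization:
\[
\nu\in\measp(\Tor^d)\text{ is a barycenter}\quad\Longleftrightarrow\quad (k*\nu)(y)=C_\kappa^2\text{ for every }y\in\Tor^d ,
\]
the right-hand side forcing $\|\nu\|=C_\kappa$ automatically (integrate over $\Tor^d$).

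\emph{Step 2: the case $d=1$.} Passing to Fourier coefficients on $\Tor^d$, the identity $k*\nu\equiv C_\kappa^2$ says precisely that $\hat\nu(\xi)=0$ for every $\xi\in\Z^d\setminus\{0\}$ with $\hat k(\xi)\neq0$, while $\hat\nu(0)=C_\kappa$. Thus a \emph{discrete} barycenter exists iff some finite nonzero $\nu\geq0$ has $\hat\nu$ vanishing on $S:=\{\xi\in\Z^d\setminus\{0\}:\hat k(\xi)\neq0\}$ (then rescale to mass $C_\kappa$). For $d=1$ one computes $\hat k$ explicitly: for $n\neq0$ it equals $\tfrac{\sin(\pi\kappa\pi\,n)}{2\pi n\,(1-\kappa^2\pi^2n^2)}$ when $\kappa\pi<1$ and $\tfrac{(-1)^n\kappa\sin(1/\kappa)}{2\,(1-\kappa^2\pi^2n^2)}$ when $\kappa\pi\geq1$, so $\hat k(n)=0$ is possible only if $n\kappa\pi\in\Z$. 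Consequently, if $\kappa\pi$ is irrational or $\kappa\pi>1$ then $\hat k(n)\neq0$ for all $n\neq0$, so $S=\Z\setminus\{0\}$ and the only measure with $\hat\nu|_S=0$ is $C_\kappa$ times the uniform measure, which is diffuse---no discrete barycenter. If instead $\kappa\pi=p/q$ in lowest terms with $\kappa\pi\leq1$, then $\hat k$ vanishes on $q\Z\setminus\{0\}$ except possibly at $\pm q$ (which occurs only for $p=1$); taking $N:=q$ (resp.\ $N:=2q$ if $p=1$), the discrete measure $\tfrac{C_\kappa}{N}\sum_{j=0}^{N-1}\delta_{j/N}$ has $\hat\nu$ supported on $N\Z$, hence satisfies $k*\nu\equiv C_\kappa^2$ and is a discrete barycenter.

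\emph{Step 3: the case $d\geq2$.} Here I would exclude discrete barycenters for \emph{every} $\kappa$ by exploiting that $k$ is not smooth. On the cell $[-\tfrac12,\tfrac12]^d$ one has $\dist(0,z)=|z|$, so $k$ is $C^\infty$ away from a nonempty codimension-one set $\Sigma\subset\Tor^d$, whose period lattice is exactly $\Z^d$: for $\kappa\pi\leq1$ this is the round sphere $\{\dist(0,\cdot)=\kappa\pi/2\}$, across which the second normal derivative of $k$ jumps by the fixed nonzero constant $-2/\kappa^2$; for $\kappa\pi>1$ it contains the cut-locus faces $\{z_i=\tfrac12\}$, across which (wherever $k>0$) the normal derivative of $k$ jumps by a strictly positive amount. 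Suppose $\nu=\sum_{j=1}^m\tilde m_j\delta_{\tilde x_j}$ (distinct atoms, $\tilde m_j>0$) were a discrete barycenter; then $k*\nu\equiv C_\kappa^2$ is $C^\infty$. When $\kappa\pi\leq1$, distinct translates of the sphere $\Sigma$ meet only in codimension two, so at a generic point of $\Sigma+\tilde x_1$ only the summand $\tilde m_1k(\cdot-\tilde x_1)$ of $k*\nu=\sum_j\tilde m_jk(\cdot-\tilde x_j)$ is singular, and the second normal derivative of $k*\nu$ jumps by $-2\tilde m_1/\kappa^2\neq0$---a contradiction. When $\kappa\pi>1$, two translated faces may coincide, but across the whole translated face $H:=\{z_1=(\tilde x_1)_1+\tfrac12\}\cong\Tor^{d-1}$ the jump of $\partial_{z_1}(k*\nu)$ equals $\sum_{j\in J}\tilde m_j\,G(\cdot-(\tilde x_j)')$ with $J:=\{j:(\tilde x_j)_1=(\tilde x_1)_1\}\ni1$ and $G\geq0$ a fixed function on $H$ with $\int_H G>0$; integrating, this jump has integral $\big(\sum_{j\in J}\tilde m_j\big)\int_H G>0$, so it cannot vanish---again a contradiction. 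Hence no discrete barycenter exists.

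\emph{Main obstacle.} The heart of the matter is Step~1: realizing, via translation symmetrization combined with the dual characterization of Proposition~\ref{prop:DualityR}, that the torus barycenters are \emph{exactly} the nonnegative solutions of the linear equation $k*\nu\equiv C_\kappa^2$, which converts a variational problem into harmonic analysis. The remaining difficulty is the geometric bookkeeping in Step~3: for large $\kappa$ the singular set $\Sigma$ has flat faces, so two translates $\Sigma+\tilde x_i$ and $\Sigma+\tilde x_j$ can share an entire hyperplane, which forces one to argue with the integrated jump over that hyperplane rather than pointwise. The one-dimensional Fourier computation in Step~2 is elementary but requires handling the resonant frequencies $n\kappa\pi=\pm1$ by a limit.
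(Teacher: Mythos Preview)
Your proof follows the paper's strategy---translation invariance plus convexity force the dual $\psi$ to be constant, whence any barycenter satisfies $k*\nu\equiv C_\kappa^2$, and one then excludes discrete $\nu$ via the singularities of $k$---but your execution is substantially more complete. For $d\geq2$ the paper merely asserts that the singularities of $g_y$ ``cannot be cancelled'' by a positive combination; your Step~3 actually proves this, via the jump of the second normal derivative across the sphere $\{\dist=\kappa\pi/2\}$ when $\kappa\pi\leq1$, and via the integrated (hence sign-definite) first-derivative jump across a cut-locus face when $\kappa\pi>1$, carefully handling the possibility that several atoms share that face. For $d=1$ your Fourier analysis replaces the paper's direct construction and in fact repairs it: the paper proposes, for rational $\kappa\pi=p/q\leq1$, the measure supported on $S_y=\{y+j\kappa\pi:j\in\Z\}$, which has exactly $q$ atoms; but when $p=1$ the resonant coefficient $\hat k(\pm q)$ does not vanish, so $q$ equally spaced atoms do \emph{not} give a constant convolution, and your doubling to $N=2q$ in that case is the correct fix.
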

	\begin{proof}
		Adapting Proposition \ref{prop:DualityR} we obtain existence of a dual maximizer $\psi \in \c(\Tor^d)$, which is unique $\rho$-a.e., i.e.~it is unique since $\rho$ has full support. It is characterized by
		\begin{align}
			\label{eq:PsiTorus}
			(1-\psi(x))^2= \int_{\Tor^d} \Cos^2(\dist(x,y)/\kappa)\,\diff \nu(y)
		\end{align}
		and the condition $\psi(x)<1$, where $\nu$ is an arbitrary primal minimizer.
		By symmetry and convexity of the problem, $\psi$ must be translation invariant, i.e.~it must be constant, and therefore we have $\psi \in C^\infty(\Tor^d)$.
		
		Assume now $d>1$.
		Note that $g_y :  x \mapsto \Cos^2(\dist(x,y)/\kappa)$ for some fixed $y \in \Tor^d$ is merely $C^1$ when $\kappa \pi/2 \leq 1/2$ and even only $C^0$ otherwise, due to its behaviour on the sphere of radius $\kappa \pi/2$ around $y$ or on the cut locus of $\dist(\cdot,y)$. Also, it is not possible to `cancel' these irregularities by carefully combining a countable number of $g_y$ for different $y$ with positive weights that have a finite sum.
		Therefore, $\psi$ cannot be constructed from a discrete $\nu$ via \eqref{eq:PsiTorus}.
		
		Now let $d=1$. For $\kappa \pi >1$ the function $g_y$ is merely $C^0$ and a finite number of $g_y$ cannot be combined into a smoother function and thus, as above, no discrete barycenter can exist.
		Let now $\kappa \pi \leq 1$ and in addition $\kappa \pi \in \Q$. Then for any $y \in \Tor^1$ the set
		\begin{align*}
			S_y \assign \{y + k \cdot \kappa \pi\,|\,k \in \Z\}
		\end{align*}
		with the obvious interpretation of addition on the torus and identification of points that differ by an integer, is finite. Then by setting $\nu \assign m \sum_{y' \in S_y} \delta_{y'}$ with a suitable $m>0$ (depending on $\kappa$), one will find that it is possible to construct a constant $\psi$ via \eqref{eq:PsiTorus} and hence this $\nu$ is a discrete primal minimizer.
		For $\kappa \pi \notin \Q$ this construction fails since $S_y$ will not be finite and therefore no countable number of $g_y$ for different $y$ with positive weights that have a finite sum yields a $C^\infty$-function.
	\end{proof}
	
	From this example we draw the following intuition for $\R^d$: When $\rho$ has a large (compared to $\kappa$) region of constant density, the question whether the barycenter $\nu$ can be discrete or diffuse it not decided in the bulk of the region but at its boundary. Therefore, by careful design of the boundary region it might be possible to construct $\rho$ for which no discrete barycenter exists. We confirm this in the next proposition.
	
	\begin{proposition}[Diffuse barycenters in $\R^d$]
		Let $d \in \N$ and $\kappa \in (0, \infty)$. There exist a compact, closed, convex set $\Omega \subset \R^d$ and a measure $\rho \in \measp(\Omega)$ such that \primalr has no discrete optimizer.
	\end{proposition}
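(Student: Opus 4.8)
The plan is to reproduce, for a suitable $\rho$ in $\R^d$, the regularity obstruction that ruled out discrete barycenters on the torus, but enforced now by a symmetry of $\Omega$ rather than by translation invariance. By Remark~\ref{rem:HKKappaRescaling} we may rescale so that $\kappa=1$; write $g_{\bar x}(x)\assign\Cos^2(|x-\bar x|)$, which equals $\cos^2(|x-\bar x|)$ on $\overline{B}(\bar x,\pi/2)$ and vanishes outside. One checks the following \emph{regularity lemma}: $g_{\bar x}\in C^1(\R^d)$ and $g_{\bar x}$ is $C^\infty$ off the sphere $S_{\bar x}\assign\{|x-\bar x|=\pi/2\}$, but across $S_{\bar x}$ its second radial derivative jumps by $\big[\tfrac{\diff^2}{\diff r^2}\cos^2 r\big]_{r=\pi/2}=-2\cos\pi=2\neq 0$. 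Hence, for $\nu=\sum_{i=1}^k\tilde m_i\,\delta_{\tilde x_i}$ with all $\tilde m_i>0$, the function $h_\nu\assign\int_\Omega\Cos^2(|\cdot-y|)\,\diff\nu(y)=\sum_i\tilde m_i\,g_{\tilde x_i}$ is $C^1$, is $C^\infty$ off $\bigcup_i S_{\tilde x_i}$, and fails to be $C^2$ at every point $\bar x\in S_{\tilde x_{i_0}}$ lying on no other $S_{\tilde x_j}$: near such a point a single summand carries the jump, which cannot cancel because $\tilde m_{i_0}>0$. The construction must force such a bad point into a region where the (unique) dual potential is smooth.

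For $d\geq 2$, take $\Omega=\overline{B}(0,R)$ --- compact and convex --- with $R>\pi/2$, and let $\rho$ be the uniform probability measure on $\Omega$. By Proposition~\ref{prop:DualityR}(ii), \eqref{eq:dualr} has an optimizer $\psi$ unique on $\spt\rho=\Omega$; since $\rho$ is $SO(d)$-invariant and the feasible set and objective of \eqref{eq:dualr} are invariant under $\psi\mapsto\psi\circ g$ for $g\in SO(d)$, uniqueness forces $\psi(x)=\varphi(|x|)$. Assume for contradiction that $\nu=\sum_{i=1}^k\tilde m_i\,\delta_{\tilde x_i}$ ($\tilde m_i>0$) is a barycenter. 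Then \eqref{eq:pdEquationPsi}, plus continuity of both sides and $\spt\rho=\Omega$, gives $(1-\psi(x))^2=h_\nu(x)$ on $\Omega$; admissibility of $\psi$ forces $\psi<1$ on $\Omega$, so $h_\nu>0$ on $\Omega$, i.e.\ $\Omega\subset\bigcup_i B(\tilde x_i,\pi/2)$, which is impossible with all $\tilde x_i=0$ because $R>\pi/2$. Pick $\tilde x_{i_0}\neq 0$. Since $|\tilde x_{i_0}|\leq R$, the sphere $S_{\tilde x_{i_0}}$ meets $B(0,R)$ in a nonempty spherical cap on which $x\mapsto|x|$ is non-constant; using $d\geq 2$ (two distinct $(d-1)$-spheres meet in a set of dimension $\leq d-2$) and that only finitely many spheres are involved, I can choose $\bar x$ in this cap with $0<|\bar x|$, $|\bar x|\neq\pi/2$, lying on no other $S_{\tilde x_j}$, and with $S(0,|\bar x|)$ equal to no $S_{\tilde x_j}$. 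By the regularity lemma $h_\nu$ is not $C^2$ at $\bar x$; since $1-\psi=\sqrt{h_\nu}>0$ near $\bar x$ and $\psi$ is radial, $\varphi$ is not $C^2$ at $|\bar x|$, so $h_\nu$ is not $C^2$ at any point of $S(0,|\bar x|)$. But $h_\nu$ is $C^\infty$ off $\bigcup_j S_{\tilde x_j}$, whence $S(0,|\bar x|)\subset\bigcup_j S_{\tilde x_j}$: a $(d-1)$-sphere inside a finite union of spheres none equal to it --- a contradiction. So \primalr has no discrete optimizer for this $\Omega,\rho$.

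For $d=1$ this scheme degenerates, since the non-$C^2$ locus of $h_\nu$ is finite and, for equally spaced abutting Diracs of equal mass, the radial-derivative jumps cancel to all orders (the ``tiling'' barycenters); a one-dimensional reflection symmetry does not suffice. Here I would instead engineer $\rho$ in the spirit of the boundary-layer heuristic of the remark preceding this proposition --- e.g.\ supported on two intervals whose lengths and separation stand in an irrational ratio adapted to $\kappa$ --- so that no finite cover of $\spt\rho$ by length-$\pi$ intervals (again needed for $\psi<1$) can have all its endpoints cancel, and so that the unique dual potential is real-analytic on an interior interval into which some uncancelled endpoint is forced; then the argument closes as above. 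The main obstacle is precisely this $d=1$ construction: producing the explicit $\rho$ and verifying, via Proposition~\ref{prop:DualityR}\eqref{point:optimalPair}, both the smoothness of the dual potential on the prescribed region and the combinatorial rigidity of the covering. The regularity lemma and the dimension count used for $d\geq 2$ are routine.
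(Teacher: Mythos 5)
Your $d\geq 2$ argument is, as far as I can check, essentially sound given the tools you cite from Proposition \ref{prop:DualityR}: with $\rho$ uniform on $\bar{B}(0,R)$, $R>\kappa\pi/2$, the dual maximizer is unique on $\spt\rho=\Omega$ and hence radial, \eqref{eq:pdEquationPsi} upgrades to $(1-\psi)^2=\int_\Omega\Cos^2(|\cdot-y|/\kappa)\,\diff\nu(y)$ on all of $\Omega$ by continuity and full support, and the jump of the second radial derivative of $\Cos^2$ across the radius-$\kappa\pi/2$ sphere then forces the non-$C^2$ locus of that integral to contain an entire sphere $S(0,|\bar x|)$ with $|\bar x|\neq\kappa\pi/2$, which finitely many radius-$\kappa\pi/2$ spheres cannot cover when $d\geq 2$. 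But the proposition asserts the result for every $d\in\N$, and for $d=1$ you give no proof: you only sketch a plan (two intervals in an ``irrational ratio'', a region where the dual potential should be analytic) and yourself identify its key verifications as the main obstacle. As it stands the statement is unproven in dimension one, so the proposal is incomplete. A second, smaller gap: your contradiction only excludes \emph{finitely} supported $\nu$. If ``discrete'' is read as purely atomic with possibly countably many atoms (the reading the paper's torus argument has to address, where it must argue that countably many kernels cannot cancel the irregularity), then ``a single summand carries the jump'' no longer suffices: an infinite tail of individually smooth summands need not be $C^2$ near $\bar x$, so cancellation is not ruled out without an additional estimate.

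For comparison, the paper's proof avoids both issues by a different mechanism. It takes $\Omega=\bar B(0,L+\kappa\pi/2)$ and a \emph{non-uniform} $\rho$ with Lebesgue density proportional to $\sigma$, where $\sigma^2=\int_{B(0,L)}\Cos^2(|\cdot-y|/\kappa)\,\diff y$; this choice makes the pair $\nu=C_d^2\kappa^{2d}a^2\,\Lebesgue^d\restr B(0,L)$, $\psi=1-C_d\kappa^d a\sigma$ explicitly verifiable as optimal via Proposition \ref{prop:DualityR}, point \eqref{point:optimalPair}. Because $\spt\rho\supset\spt\nu+B(0,\kappa\pi/2)$, the relation $(1-\psi)^2=\Cos^2_\kappa*\nu$ determines the convolution on all of $\R^d$ (it vanishes outside $\Omega$), and a Fourier/convolution-theorem argument — $\widehat{\Cos^2_\kappa}\neq 0$ almost everywhere since the kernel is compactly supported, and a compactly supported measure is determined by its Fourier transform — shows the primal minimizer is \emph{unique}, hence equal to the diffuse one. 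That rules out every non-diffuse competitor (finite, countable, or otherwise) uniformly in $d\geq 1$, whereas your symmetry-plus-regularity route, attractive as it is for $d\geq 2$, would still need the $d=1$ construction and the countable-atom refinement to match the claim.
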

	
	\begin{proof}[Proof]
		Let $L \in (0,\infty)$ and let $\Omega \assign \bar{B}(0, L+\kappa\pi/2) \subset \R^d$ be the closed ball centered at the origin with radius $L+\kappa\pi/2$. Define the function $\sigma \in \c(\Omega)$ as
		\[
		\sigma^2(x) \assign \int_{B(0,L)} \Cos^2(|x-y|/\kappa)\,\diff y \quad \tn{for all } x \in \Omega.
		\]
		Denote $C_d \assign \|\Cos^2(|\cdot|)\|_{L^1(\R^d)}$ and $a \assign 1/\|\sigma\|_{L^1(\Omega)}$, and consider the probability measure $\rho \assign a\cdot \sigma \cdot \Lebesgue^d$. Let $(\nu, \psi) \in \measp(\Omega) \times \c(\Omega)$ be defined as
		\[
		\nu \assign C_d^2 \kappa^{2d} a^2 \cdot \Lebesgue^d\restr B(0, L)
		\quad \tn{and} \quad
		\psi \assign 1 - C_d\kappa^d\cdot a \cdot \sigma.
		\]
		The pair $(\nu, \psi) \in \measp(\Omega) \times \c(\Omega)$
		is an optimal primal-dual pair for \primalr and \dualr. Indeed, one readily checks that
		\[
		F_{\rho, \kappa}(\psi)(y) = \int_{\Omega} \frac{\Cos^2(|x-y|/\kappa)}{1-\psi(x)} \diff \rho(x)= \frac{1}{C_d\kappa^d} \int_{\Omega} \Cos^2(|x-y|/\kappa)\,\diff x
		\leq 1 \quad \tn{for all } y \in \Omega
		\]
		and $F_{\rho, \kappa}(\psi)(y) = 1$ for all $y \in \bar{B}(0,L) = \spt \nu$. Further, by construction,
		\[
		(1-\psi(x))^2 = C_d^2 \kappa^{2d} a^2 \sigma^2(x) =
		C_d^2 \kappa^{2d} a^2 \int_{B(0,L)} \Cos^2(|x-y|/\kappa)\,\diff y = \int_{\Omega} \Cos^2(|x-y|/\kappa)\,\diff \nu(x).
		\]
		Hence, optimality of $\nu$ and $\psi$ follows from Proposition \ref{prop:DualityR}, point \eqref{point:optimalPair}.
		Since $\psi$ is unique on $\spt \rho$, the function $F_{\rho, \kappa}(\psi)$ is unique and identical for all dual maximizers, therefore the set $\{y \in \Omega | F_{\rho, \kappa}(\psi)(y) = 1\}=\bar{B}(0,L)$ is unique, and finally we find that any primal minimizer must be concentrated on $\bar{B}(0,L)$.
		
		Denote by $\Cos^2_\kappa : \R^d \to [0,1]$ the function $x \mapsto \Cos^2(|x|/\kappa)$. Extend the measure $\nu$ from $\Omega$ to $\R^d$ by zero. Then by the above, on obtains for the convolution
		\begin{align*}
			(\Cos^2_\kappa * \nu)(y) \assign \int_{\R^d} \Cos^2_\kappa(y-x)\,\diff \nu(x)
			= \begin{cases}
				\sigma^2(y) & \tn{for } y \in \Omega, \\
				0 & \tn{else,}
			\end{cases}
		\end{align*}
		that is, it is known on all of $\R^d$. Now the proof strategy is to show that since the convolution of $\nu$ with a compact kernel is fully known, $\nu$ must indeed be uniquely determined and be equal to the above, and hence no other primal minimizer exists, in particular none that is discrete.
		
		\newcommand{\FT}{\mc{F}}
		\newcommand{\FTT}{\hat{\mc{F}}}
		
		Denote by $\FT$ the Fourier transform on $\R^d$, acting on suitable functions $f$ and measures $\mu$ as
		\begin{align*}
			(\FT f)(k) & \assign \frac{1}{(2\pi)^d} \int_{\R^d} f(x)\,\exp(ikx)\,\diff x, &
			(\FT \mu)(k) & \assign \frac{1}{(2\pi)^d} \int_{\R^d} \exp(ikx)\,\diff \mu(x)
		\end{align*}
		whenever these integrals are well-defined.
		Since $(\Cos^2_\kappa * \nu) \in L^2(\R^d)$, $\FT (\Cos^2_\kappa * \nu)$ is well-defined.
		The convolution theorem now corresponds to the observation that for almost every $k$ one has
		\begin{align*}
			(\FT \Cos^2_\kappa * \nu)(k) & = \frac{1}{(2\pi)^d} \int_{\R^d} (\Cos^2_\kappa * \nu)(x)\exp(ikx)\diff x \\
			& = \frac{1}{(2\pi)^d} \int_{\R^d} \int_{\R^d} \Cos^2_\kappa(x-y) \diff \nu(y) \exp(ikx)\diff x
			= \int_{\R^d} (\FT \Cos^2_\kappa)(k) \exp(iky) \diff \nu(y) \\
			& = (2\pi)^d (\FT \Cos^2_\kappa)(k)\,(\FT \nu)(k),
		\end{align*}
		where we swapped the order of integration by Fubini's theorem.
		
		Since $\Cos^2_\kappa$ has compact support, $\FT \Cos^2_\kappa(k) \neq 0$ $k$-almost everywhere and thus we obtain that
		\begin{align*}
			(\FT \nu)(k) = \frac{(\FT \Cos^2_\kappa * \nu)(k)}{(2\pi)^d (\FT \Cos^2_\kappa)(k)}
		\end{align*}
		for almost all $k$, i.e.~the Fourier transform of all primal minimizers must agree almost everywhere.
		
		We now show that for a finite measure $\mu$ on $\R^d$ with compact support one finds [$\FT \mu(k)=0\,\tn{$k$-a.e.}$] $\Rightarrow$ [$\mu=0$] and thus by linearity of $\FT$ this implies that knowing $(\FT \nu)$ $k$-almost everywhere uniquely determines $\nu$.
		Let $g$ be a continuous convolution kernel with compact support and total mass $1$, and for $\veps>0$ let $g_\veps(x) \assign \veps^{-d} \cdot g(x/\veps)$ be the re-scaled version. Then clearly $g_\veps * \mu \weakstar \mu$ as $\veps \to 0$. Let $\varphi \in \c_c(\R^d)$ be continuous with compact support.
		Then one finds
		\begin{align*}
			\int_{\R^d} \varphi\,\diff \mu & = \lim_{\veps \searrow 0} \int_{\R^d} \varphi(x)\,(g_\veps * \mu)(x)\,\diff x
			= \lim_{\veps \searrow 0} (2\pi)^d \int_{\R^d} \overline{(\FT \varphi)(k)}\, (\FT (g_\veps * \mu))(k)\,\diff k \\
			& = \lim_{\veps \searrow 0} (2\pi)^{2d} \int_{\R^d} \overline{(\FT \varphi)(k)}\, (\FT g_\veps)(k) (\FT \mu)(k)\,\diff k
			= 0,
		\end{align*}
		where we first used unitarity (up to normalization) of the Fourier transform on $L^2(\R^d,\C)$ and that $\varphi, (g_\veps * \mu) \in L^2(\R^d,\C)$ for all $\veps>0$, then again the convolution theorem as above, and finally the assumption $\FT \mu(k)=0$ for almost all $k$. Since this holds for all $\varphi \in \c_c(\R^d)$, we must have that $\mu=0$.
		
		In conclusion, the minimizer $\nu$ constructed above must be unique and therefore no discrete minimizers can exist.
	\end{proof}
	Note that the above argument with the convolution only works since $\spt \rho \supset B(0,\kappa \pi/2) + \spt \nu$ (where the plus denotes the Minkowski sum). In other cases, the convolution $\Cos^2_\kappa * \nu$ may not be fully known and consequently $\nu$ may be non-unique.
	
	\subsection[Asymptotic behaviour for kappa to 0]{Asymptotic behaviour for $\kappa \to 0$}
	\label{sec:Asymptotic}
	Now we look more closely at the limiting behaviour of the functional as $\kappa \to 0$ (the case $\kappa \to \infty$ is given by the Wasserstein limit and well-understood).
	We start by specifying the unique minimizer $\nu_\kappa$ for $\kappa = 0$.
	
	\begin{proposition}\label{prop:HellBarycenter}
		Let $\rho \in \prob(\Omega)$ and consider the decomposition
		\begin{equation}\label{eq:rhoDecomposition}
			\rho = \rho_c + \sum_{i=1}^\infty m_i \delta_{x_i}, \quad x_i \in \Omega, m_i \geq 0 \tn{ for all } i \geq 1,
		\end{equation}
		with $\rho_c \in \measp(\Omega)$ atomless. Then,
		\[
		\nu = \sum_{i=1}^\infty m_i^2 \delta_{x_i} \quad \tn{ is the unique optimzier of } \quad \inf \left\{ \int_{\Omega} \Hell^2(\delta_x,\nu)\,\diff\rho(x) \middle| \nu \in \measp(\Omega) \right\}.
		\] 
	\end{proposition}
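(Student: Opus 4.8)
The plan is to reduce the whole problem to an elementary one-parameter optimization, by first writing $\Hell^2(\delta_x,\nu)$ in closed form. Fix $\nu \in \measp(\Omega)$ and pick $\tau \in \measp(\Omega)$ with $\delta_x,\nu \ll \tau$, for instance $\tau = \delta_x + \nu$. Expanding the square in \eqref{eq:Hell} gives
\[
\Hell^2(\delta_x,\nu) = 1 + \|\nu\| - 2\int_\Omega \sqrt{\tfrac{\diff\delta_x}{\diff\tau}\,\tfrac{\diff\nu}{\diff\tau}}\,\diff\tau .
\]
Since $\diff\delta_x/\diff\tau$ vanishes $\tau$-a.e.\ on $\Omega\setminus\{x\}$, only the point $x$ contributes to the integral, and a direct computation there (using $\tau(\{x\}) = 1+\nu(\{x\})$) yields $\int_\Omega \sqrt{(\diff\delta_x/\diff\tau)(\diff\nu/\diff\tau)}\,\diff\tau = \sqrt{\nu(\{x\})}$, independent of the choice of $\tau$ as already noted after \eqref{eq:Hell}. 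Hence $\Hell^2(\delta_x,\nu) = 1 + \|\nu\| - 2\sqrt{\nu(\{x\})}$; in particular $x \mapsto \Hell^2(\delta_x,\nu)$ is Borel, being a constant minus $2\sqrt{\nu(\{x\})}$, a function supported on the at most countable atom set of $\nu$.

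Integrating against $\rho$ and using that the atomless part $\rho_c$ gives zero mass to every single point, only the atoms $x_i$ of $\rho$ survive, so (assuming w.l.o.g.\ the $x_i$ pairwise distinct)
\[
\int_\Omega \Hell^2(\delta_x,\nu)\,\diff\rho(x) = 1 + \|\nu\| - 2\sum_{i\ge 1} m_i \sqrt{\nu(\{x_i\})}.
\]
Next I would show that any candidate can be improved to one concentrated on $\{x_i\}_{i\ge1}$: given $\nu$, set $\nu' \assign \sum_{i\ge1}\nu(\{x_i\})\,\delta_{x_i}$; the sum $\sum_i m_i\sqrt{\nu(\{x_i\})}$ is unchanged while $\|\nu'\| \le \|\nu\|$, with equality if and only if $\nu = \nu'$. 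Thus the objective strictly decreases unless $\nu = \sum_{i\ge1} n_i\,\delta_{x_i}$ for some $n_i \ge 0$.

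For such $\nu$ the objective becomes $1 + \sum_{i\ge1}\big(n_i - 2m_i\sqrt{n_i}\big)$, so it remains to minimize $n \mapsto n - 2m\sqrt{n}$ on $[0,\infty)$ for each $i$ separately. Writing $s = \sqrt{n}$ this is the strictly convex polynomial $s^2 - 2ms$, with unique minimizer $s = m$, i.e.\ $n_i = m_i^2$, and minimal value $-m_i^2$. Combining with the previous paragraph, $\nu = \sum_{i\ge1} m_i^2\,\delta_{x_i}$ is the unique minimizer (it lies in $\measp(\Omega)$ since $\sum_i m_i^2 \le \sum_i m_i \le 1$), with optimal value $1 - \sum_{i\ge1} m_i^2$. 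The only step requiring genuine care is the closed-form evaluation of $\Hell^2(\delta_x,\nu)$, where one must correctly account for a possible atom of $\nu$ at $x$ and for the $\tau$-independence; everything after that is a termwise minimization together with the ``project onto the atoms of $\rho$'' reduction needed for uniqueness.
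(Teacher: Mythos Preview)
Your proof is correct and follows essentially the same route as the paper: compute $\Hell^2(\delta_x,\nu)=1+\|\nu\|-2\sqrt{\nu(\{x\})}$, integrate against $\rho$ so that only the atoms of $\rho$ contribute, reduce to $\nu$ supported on $\{x_i\}$, and minimize termwise. Your version is a bit more explicit about the projection step needed for uniqueness and the termwise convexity argument, but the structure is identical.
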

	
	\begin{proof}
		Taking into account \eqref{eq:rhoDecomposition}, for any $\nu \in \measp(\Omega)$, we can write
		\[
		\int_{\Omega} \Hell^2(\delta_x,\nu)\,\diff\rho(x) = \sum_{i=1}^\infty m_i \Hell^2(\delta_{x_i}, \nu) + \int_{\Omega} \Hell^2(\delta_x,\nu)\,\diff\rho_c(x).
		\]
		For the first term, with \eqref{eq:Hell} observe that for any $x \in \Omega$,
		\begin{align*}
			\Hell^2(\delta_x,\nu) & = (1-\sqrt{\nu(\{x\})})^2+\nu(\Omega \setminus \{x\})
			= 1-2\sqrt{\nu(\{x\})}+\|\nu\|.
		\end{align*}
		For the second term, let $m = \sum_i m_i \in [0,1]$ be the total mass of the atomic part of $\rho$.
		Observe now that, since $\rho_c$ is atomless, we have $\nu(\{x\}) = 0$ for $\rho_c$-a.e. $x \in \Omega$, so that the second term in the sum above simplifies into
		\[
		\int_{\Omega} \Hell^2(\delta_x,\nu)\,\diff\rho_c(x) = \|\rho_c\|(1+\|\nu\|) = \left(1-m\right)(1+\|\nu\|).
		\]
		Therefore, one obtains
		\begin{align*}
			\int_{\Omega} \Hell^2(\delta_x,\nu)\,\diff\rho(x) = 1+\|\nu\|-2\sum_{i=1}^\infty m_i \sqrt{\nu(\{x_i\})}.
		\end{align*}
		Hence, any optimal $\nu$ must be supported on $\{x_i\}_{i=1}^\infty$, and the Hellinger barycenter problem for $\rho$ reduces to
		\[
		\inf \left\{ 1+\sum_{i=1}^\infty n_i -2\sum_{i=1}^\infty m_i \sqrt{n_i} \middle| \nu = \sum_{j=1}^\infty n_j \delta_{x_j}, n_j \geq 0 \right\}.
		\]
		The result follows by first order optimality conditions for each $n_i$.
	\end{proof}
	
	For some $\rho \in \prob(\Omega)$ and $\kappa \in [0,\infty)$ let now $\nu_\kappa$ be a primal optimizer of \primalr. Then, by Corollary~\ref{cor:zeroconvergence}, as $\kappa \to 0$, $\nu_k$ converges to the unique minimizer $\nu_0$ of the Hellinger barycenter problem for $\rho$ which is specified by Proposition \ref{prop:HellBarycenter}. We find that the only contributions to $\nu_0$ arise from the atoms of $\rho$, all other contributions must tend to $0$ as $\kappa \to 0$. The following Lemma provides a rough estimate on the corresponding rate. It is related to the concentration of $\rho$.
	
	\begin{lemma}
		\label{lemma:massBound}
		Let $\rho \in \prob(\Omega)$. For $\kappa \in (0,\infty)$, let $\nu_\kappa \in \measp(\Omega)$ be a minimizer of \primalr. Denote
		\[
		C_{\rho, \kappa} \assign \sup_{y \in \Omega} \left[ \rho(B(y,\kappa\cdot \pi/2)) \right].
		\]
		Then,
		\begin{equation}\label{eq:massBound}
			\|\nu_k\| \leq 4C_{\rho, \kappa}.
		\end{equation}
	\end{lemma}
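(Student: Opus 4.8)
The plan is to convert the first-order optimality of $\nu_\kappa$ into a closed identity for its mass and then close it with a Cauchy--Schwarz estimate against the probability measure $\rho$. If $\nu_\kappa = 0$ there is nothing to prove, so assume $\nu_\kappa \neq 0$. First I would test optimality of $\nu_\kappa$ against the rescaled competitors $t\,\nu_\kappa$ for $t > 0$: by \eqref{eq:primalrDirect},
\[
J_{\rho,\kappa}(t\,\nu_\kappa) = 1 + t\,\|\nu_\kappa\| - 2\sqrt{t}\int_\Omega \sqrt{\int_\Omega \Cos^2(|x-y|/\kappa)\,\diff\nu_\kappa(y)}\,\diff\rho(x),
\]
and since $t = 1$ is an interior minimizer of this differentiable function of $t \in (0,\infty)$, setting the $t$-derivative to zero at $t = 1$ yields
\[
\|\nu_\kappa\| = \int_\Omega \sqrt{\int_\Omega \Cos^2(|x-y|/\kappa)\,\diff\nu_\kappa(y)}\,\diff\rho(x).
\]
(The same identity can be read off the primal--dual relations \eqref{eq:pdEquation} of Proposition~\ref{prop:DualityR}: integrating $F_{\rho,\kappa}(\psi) = 1$ against $\nu_\kappa$, applying Tonelli, and substituting $(1-\psi(x))^2 = \int_\Omega \Cos^2(|x-y|/\kappa)\,\diff\nu_\kappa(y)$ turns $\|\nu_\kappa\|$ into $\int_\Omega (1-\psi)\,\diff\rho$, which equals the right-hand side above.)

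Next I would apply the Cauchy--Schwarz inequality with respect to $\rho$ (using $\|\rho\| = 1$) to the last display, obtaining
\[
\|\nu_\kappa\|^2 \le \int_\Omega \int_\Omega \Cos^2(|x-y|/\kappa)\,\diff\nu_\kappa(y)\,\diff\rho(x),
\]
then swap the order of integration by Tonelli's theorem (the integrand is non-negative), and finally use that $\Cos^2(|x-y|/\kappa) \le 1$ and vanishes when $|x-y| > \kappa\pi/2$, which gives $\int_\Omega \Cos^2(|x-y|/\kappa)\,\diff\rho(x) \le \rho(B(y,\kappa\pi/2)) \le C_{\rho,\kappa}$ for every $y \in \Omega$. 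This produces $\int_\Omega \int_\Omega \Cos^2(|x-y|/\kappa)\,\diff\rho(x)\,\diff\nu_\kappa(y) \le C_{\rho,\kappa}\,\|\nu_\kappa\|$, hence $\|\nu_\kappa\|^2 \le C_{\rho,\kappa}\,\|\nu_\kappa\|$, and therefore $\|\nu_\kappa\| \le C_{\rho,\kappa}$, which is stronger than \eqref{eq:massBound}.

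I do not anticipate a real obstacle here; the argument is essentially a one-line self-improving estimate. The only points needing a little care are the degenerate case $\nu_\kappa = 0$ and, if one prefers the dual route, the legitimacy of Tonelli's theorem, which holds because $\psi \in \c(\Omega)$ with $\psi < 1$ on the compact set $\Omega$ keeps $1/(1-\psi)$ bounded. Since the method already yields $\|\nu_\kappa\| \le C_{\rho,\kappa}$, the factor $4$ in \eqref{eq:massBound} leaves ample room; the estimate is ``rough'' only in that the constant is not optimized, but it does capture the intended feature that the barycenter mass is controlled by the concentration of $\rho$ at scale $\kappa\pi/2$.
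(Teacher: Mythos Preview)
Your argument is correct and in fact yields the sharper bound $\|\nu_\kappa\| \le C_{\rho,\kappa}$. The paper's proof is slightly different and cruder: instead of your first-order scaling condition $\|\nu_\kappa\| = \int_\Omega \sqrt{\int_\Omega \Cos^2(|x-y|/\kappa)\,\diff\nu_\kappa(y)}\,\diff\rho(x)$, it only uses the zeroth-order comparison $J_{\rho,\kappa}(\nu_\kappa) \le J_{\rho,\kappa}(0)=1$, which gives the weaker inequality $\|\nu_\kappa\| \le 2\int_\Omega \sqrt{\ldots}\,\diff\rho(x)$ with an extra factor of $2$. After that both proofs apply the same estimate (Jensen for $\sqrt{\cdot}$, equivalently Cauchy--Schwarz against the constant $1$) and the same pointwise bound $\int_\Omega \Cos^2(|x-y|/\kappa)\,\diff\rho(x)\le C_{\rho,\kappa}$, so the paper ends up with $\sqrt{\|\nu_\kappa\|}\le 2\sqrt{C_{\rho,\kappa}}$ and hence the factor $4$. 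Your scaling trick buys the optimal constant at essentially no extra cost; the paper's route avoids differentiation but pays for it in the constant. One minor remark: your justification of Tonelli via boundedness of $1/(1-\psi)$ is unnecessary (non-negativity of the integrand suffices), and is in fact slightly delicate since the canonical dual $\psi(x)=1-\sqrt{\int \Cos^2(|x-y|/\kappa)\,\diff\nu_\kappa(y)}$ may equal $1$ at points far from $\spt\nu_\kappa$; the primal scaling route you lead with is cleaner.
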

	
	\begin{proof}
		Via reverse Jensen's inequality, we have
		\[
		\int_\Omega \sqrt{\int_\Omega \Cos^2(|x-y|/\kappa)\,\diff\nu_\kappa(y)} \,\diff\rho(x) \leq \sqrt{ \int_\Omega  \int_\Omega \Cos^2(|x-y|/\kappa)\,\diff\nu_\kappa(y) \diff\rho(x)} \leq \sqrt{C_{\rho, \kappa} \|\nu_\kappa\|}.
		\]
		Taking into account that the zero measure provides an upper bound for the optimal value, the inequality above provides
		\begin{align*}
			1 = J_{\rho, \kappa}(0) \geq J_{\rho, \kappa}(\nu_\kappa)
			\geq 1 + \|\nu_k\| - 2 \sqrt{C_{\rho, \kappa}} \sqrt{\|\nu_\kappa\|},
		\end{align*}
		so that $\|\nu_k\| \leq 4C_{\rho, \kappa}$, which establishes \eqref{eq:massBound}.
	\end{proof}
	
	If $\rho$ contains atoms, then $\lim_{\kappa \searrow 0} C_{\rho, \kappa}>0$ and $\|\nu_0\|>0$, in agreement with Proposition \ref{prop:HellBarycenter}. If $\rho$ is atomless, then $\lim_{\kappa \searrow 0} C_{\rho, \kappa}=0$ (by outer regularity of Radon measures). The rate will depend on $\rho$, and will be slower, for instance, when $\rho$ is concentrated on a lower-dimensional submanifold. When $\lim_{\kappa \searrow 0} C_{\rho, \kappa}=0$ we also obtain $\psi_\kappa \to 1$ uniformly for dual maximizers via \eqref{eq:pdEquationPsi}.
	
	\begin{remark}[Different limits as $\kappa \to \infty$]\label{rem:KappaZeroLimitExample}
		We briefly resume the discussion of Remark \ref{rem:JointStability}.
		Let $(\kappa_n)_n$ be a positive sequence, converging to $\kappa_\infty=0$, let $x$ be in the interior of $\Omega$, and let $\rho_n$ be convolutions of $\delta_x$ with some compact mollifier, with width going to zero as $n \to \infty$, such that $\rho_n \rightweaks \rho_\infty \assign \delta_x$.
		Then the minimizer of $J_{\rho_n,\kappa_\infty}$ will be $\nu=0$ for all $n<\infty$, whereas it will be $\nu=\delta_x$ for $J_{\rho_\infty,\kappa_n}$ for all $n$ up to $n=\infty$.
	\end{remark}
	
	Finally, by assuming that $\rho$ has an $L^2$-density with respect to the Lebesgue measure, we will now provide a more precise statement on the asymptotic behaviour of $\nu_\kappa$ and $\psi_\kappa$.
	
	\begin{proposition}\label{prop:instability}
		Let $\rho \in \prob(\Omega)$ and assume $\rho \ll \Lebesgue^d\restr \Omega$ with $\diff \rho/\diff \Lebesgue^d \in L^2(\Omega)$. Denote by $C_d \assign \|\Cos^2(|\cdot|)\|_{L^1(\R^d)}$. For any $\kappa \in (0,\infty)$, let $(\nu_\kappa, \psi_\kappa) \in \measp(\Omega) \times \c(\Omega)$ be an optimal pair for \primalr and \dualr. Then, $\nu_\kappa \weakstar 0$ as $\kappa \to 0$ with
		\begin{equation}\label{eq:massBoundL2}
			\|\nu_k\| \leq 4C_d\left\|\tfrac{\diff \rho}{\diff \Lebesgue^d} \right\|_{L^2(\Omega)}^2 \cdot \kappa^d.
		\end{equation}
		In particular, $\nu_\kappa/(C_d\kappa^d) \weakstar \left( \tfrac{\diff \rho}{\diff \Lebesgue^d}\right) ^2 \cdot \Lebesgue^d$ and $\|1-\psi_\kappa\|_\infty =O(\kappa^{d/2})$ as $\kappa \to 0$.
	\end{proposition}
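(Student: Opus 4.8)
The plan is to run everything through the primal--dual relations of Proposition~\ref{prop:DualityR}, exploiting that $\Cos^2_\kappa := \Cos^2(|\cdot|/\kappa)$, normalised by $C_d\kappa^d = \|\Cos^2_\kappa\|_{L^1(\R^d)}$, yields a standard mollifier family $\eta_\kappa := \Cos^2_\kappa/(C_d\kappa^d)$ (non-negative, integral $1$, support in $\bar B(0,\kappa\pi/2)$). Write $r := \diff\rho/\diff\Lebesgue^d$ (extended by zero outside $\Omega$), $\mu_\kappa := \nu_\kappa/(C_d\kappa^d)$, $f_\kappa := \eta_\kappa*\mu_\kappa$, so that $\Cos^2_\kappa*\nu_\kappa = (C_d\kappa^d)^2 f_\kappa$. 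First I would prove the mass bound: since $J_{\rho,\kappa}(\nu_\kappa)\leq J_{\rho,\kappa}(0)=1$, \eqref{eq:primalrDirect} gives
\[
\|\nu_\kappa\| \leq 2\int_\Omega\sqrt{(\Cos^2_\kappa*\nu_\kappa)(x)}\,r(x)\,\diff x \leq 2\|r\|_{L^2(\Omega)}\Big(\int_{\R^d}\Cos^2_\kappa*\nu_\kappa\,\diff x\Big)^{1/2} = 2\|r\|_{L^2(\Omega)}\sqrt{C_d\kappa^d\,\|\nu_\kappa\|},
\]
the middle step being Cauchy--Schwarz on $L^2(\Omega)$ — where the $L^2$ hypothesis enters, replacing the reverse Jensen estimate of Lemma~\ref{lemma:massBound} — and the last step Young's inequality $\int_{\R^d}\Cos^2_\kappa*\nu_\kappa\,\diff x = C_d\kappa^d\|\nu_\kappa\|$. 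Squaring yields \eqref{eq:massBoundL2}; in particular $\|\nu_\kappa\|\to0$, so $\nu_\kappa\weakstar0$, and by \eqref{eq:pdEquationPsi} one has $1-\psi_\kappa = \sqrt{\Cos^2_\kappa*\nu_\kappa}\leq\sqrt{\|\nu_\kappa\|}$ on $\spt\rho$, hence $\|1-\psi_\kappa\|_\infty = O(\kappa^{d/2})$.

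Next I would determine $\lim_\kappa\|\mu_\kappa\|$. By strong duality (Proposition~\ref{prop:DualityR}) and \eqref{eq:pdEquationPsi}, the common optimal value equals $\int_\Omega\psi_\kappa\,\diff\rho = 1 - \int_\Omega\sqrt{\Cos^2_\kappa*\nu_\kappa}\,\diff\rho$, while \eqref{eq:primalrDirect} gives it as $1+\|\nu_\kappa\|-2\int_\Omega\sqrt{\Cos^2_\kappa*\nu_\kappa}\,\diff\rho$; equating yields $\|\nu_\kappa\| = \int_\Omega\sqrt{\Cos^2_\kappa*\nu_\kappa}\,\diff\rho$, i.e.
\[
\|\mu_\kappa\| = \int_\Omega r\sqrt{f_\kappa}\,\diff x \leq \|r\|_{L^2}\Big(\int_\Omega f_\kappa\,\diff x\Big)^{1/2} \leq \|r\|_{L^2}\,\|\mu_\kappa\|^{1/2},
\]
so $\|\mu_\kappa\|\leq\|r\|_{L^2}^2$ and the optimal value equals $1-C_d\kappa^d\|\mu_\kappa\|$. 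For the matching lower bound, test the primal with $\nu := C_d\kappa^d\,r^2\Lebesgue^d$: using $\Cos^2_\kappa*\nu = (C_d\kappa^d)^2(\eta_\kappa*r^2)$ one computes $J_{\rho,\kappa}(\nu) = 1 + C_d\kappa^d\big(\|r\|_{L^2}^2 - 2\int_\Omega\sqrt{\eta_\kappa*r^2}\;r\,\diff x\big)$, and since $\eta_\kappa*r^2\to r^2$ in $L^1$ (mollifier) hence $\sqrt{\eta_\kappa*r^2}\to r$ in $L^2$ (from $(\sqrt a-\sqrt b)^2\leq|a-b|$), the bracket tends to $-\|r\|_{L^2}^2$, i.e.\ $J_{\rho,\kappa}(\nu) = 1 - C_d\kappa^d\|r\|_{L^2}^2 + o(\kappa^d)$. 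As the optimal value is $\leq J_{\rho,\kappa}(\nu)$, we get $\|\mu_\kappa\|\geq\|r\|_{L^2}^2 - o(1)$, whence $\|\mu_\kappa\|\to\|r\|_{L^2}^2$.

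Finally I would identify the weak* limit. By the mass bound $(\mu_\kappa)_\kappa$ is bounded in total variation, hence weak* precompact, so it suffices to show every cluster point equals $r^2\Lebesgue^d$. Let $\mu_{\kappa_n}\weakstar\bar\mu$. In the chain $\|\mu_{\kappa_n}\| = \int_\Omega r\sqrt{f_{\kappa_n}}\,\diff x \leq \|r\|_{L^2}\big(\int_\Omega f_{\kappa_n}\,\diff x\big)^{1/2} \leq \|r\|_{L^2}\|\mu_{\kappa_n}\|^{1/2}$ the two ends converge to $\|r\|_{L^2}^2$, so all inequalities are asymptotic equalities: $\int_\Omega f_{\kappa_n}\,\diff x\to\|r\|_{L^2}^2$ and $\int_\Omega r\sqrt{f_{\kappa_n}}\,\diff x\to\|r\|_{L^2}^2$, and expanding $\|\sqrt{f_{\kappa_n}}-r\|_{L^2(\Omega)}^2$ then forces $\sqrt{f_{\kappa_n}}\to r$ in $L^2(\Omega)$, hence $f_{\kappa_n}\to r^2$ in $L^1(\Omega)$. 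Moreover $\int_{\R^d\setminus\Omega}f_{\kappa_n}\,\diff x = \|\mu_{\kappa_n}\| - \int_\Omega f_{\kappa_n}\,\diff x\to0$, so $f_{\kappa_n}\Lebesgue^d\to r^2\Lebesgue^d$ in total variation, a fortiori weak*. On the other hand $f_{\kappa_n}\Lebesgue^d = (\eta_{\kappa_n}*\mu_{\kappa_n})\Lebesgue^d\weakstar\bar\mu$, because $\eta_{\kappa_n}*\varphi\to\varphi$ uniformly for $\varphi\in\c(\Omega)$ and $\sup_n\|\mu_{\kappa_n}\|<\infty$. Comparing, $\bar\mu = r^2\Lebesgue^d$; since this holds for every cluster point, $\nu_\kappa/(C_d\kappa^d) = \mu_\kappa\weakstar r^2\Lebesgue^d$.

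The main obstacle is this last step: upgrading the scalar fact $\|\mu_\kappa\|\to\|r\|_{L^2}^2$ to the measure identity $\bar\mu = r^2\Lebesgue^d$. The mechanism is that asymptotic equality in the Cauchy--Schwarz/Young chain pins $f_\kappa=\eta_\kappa*\mu_\kappa$ down to $r^2$ in $L^1$, while the mollifier property simultaneously forces $\eta_\kappa*\mu_\kappa\weakstar\bar\mu$; one must also keep track of the mass that $\eta_\kappa*\mu_\kappa$ deposits in a collar of width $\kappa\pi/2$ outside $\Omega$, but that is controlled by the exact identity $\int_{\R^d}\eta_\kappa*\mu_\kappa\,\diff x = \|\mu_\kappa\|$. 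A minor point to handle with care is that the claimed uniform rate for $1-\psi_\kappa$ a priori holds only on $\spt\rho$ (where the optimal $\psi_\kappa$ is pinned by \eqref{eq:pdEquationPsi}); for the canonical dual maximizer defined by that formula it holds on all of $\Omega$.
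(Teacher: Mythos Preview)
Your proof is correct and follows essentially the same route as the paper: the mollifier $\eta_\kappa$, Cauchy--Schwarz for the mass bound \eqref{eq:massBoundL2}, the trial measure $C_d\kappa^d r^2\Lebesgue^d$ for the energy upper bound, $L^2$-convergence of $\sqrt{\eta_\kappa*\mu_\kappa}$ to $r$, and the identity $\int\varphi\,(\eta_\kappa*\mu_\kappa)\,\diff x = \int(\eta_\kappa*\varphi)\,\diff\mu_\kappa$ to transfer this to weak* convergence of $\mu_\kappa$.

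The one genuine addition on your side is the primal--dual identity $\|\nu_\kappa\| = \int_\Omega\sqrt{\Cos^2_\kappa*\nu_\kappa}\,\diff\rho$ (equivalently, the optimal value is exactly $1 - C_d\kappa^d\|\mu_\kappa\|$), which the paper does not isolate. You use it to first establish $\|\mu_\kappa\|\to\|r\|_{L^2}^2$ and then force asymptotic equality in the Cauchy--Schwarz chain on $L^2(\Omega)$, handling the collar $\R^d\setminus\Omega$ and the cluster points separately. The paper instead expands $(J_{\rho,\kappa}(\nu_\kappa)-1)/(C_d\kappa^d)$ directly as $\|\sqrt{f_\kappa}-r\|_{L^2(\R^d)}^2 - \|r\|_{L^2}^2$ (using $\|\nu_\kappa\| = \|\sqrt{f_\kappa}\|_{L^2(\R^d)}^2$) and compares with the energy bound, obtaining $\sqrt{f_\kappa}\to r$ in $L^2(\R^d)$ in one step; this bypasses the subsequence argument and the collar estimate. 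Your version is a touch longer but has the merit of making the exact value of the optimum transparent. Your closing caveat that the uniform bound on $1-\psi_\kappa$ a priori holds only on $\spt\rho$ for an arbitrary dual maximizer is correct and in fact slightly more careful than the paper's presentation.
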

	
	\begin{proof}
		\emph{Step 0. Preliminaries on mollifiers.} For each $\kappa > 0$, consider the continuous function $\eta_\kappa \colon \R^d \to \R$ defined as
		\[
		\eta_\kappa(x) \assign \frac{\Cos^2(|x|/\kappa)}{C_d \kappa^d}.
		\]
		The collection $(\eta_\kappa)_{\kappa > 0}$ provides a family of compactly supported continuous mollifiers, which are also positive and radially symmetric. For any measure $\nu \in \meas(\R^d)$, we define as usual
		\[
		(\eta_\kappa * \nu)(x) \assign \int_{\R^d} \eta_\kappa(x-y)\,\diff \nu(y) \quad \text{for } x \in \R^d
		\]
		and extend such definition to functions $f \in L^p_{\textup{loc}}(\R^d)$, $p \geq 1$, setting $\eta_\kappa * f \assign \eta_\kappa * (f\Lebesgue^d)$. We recall from \cite[Appendix C, Theorem 6]{Evans2010} the following classical results:
		\begin{itemize}
			\item Let $f \in L^p(\R^d)$, $p \geq 1$. Then, $\eta_\kappa * f \in \c(\R^d)$ and $\eta_\kappa * f \to f$ in $L^p(\R^d)$ as $\kappa \to 0$.
			\item Let $f \in \c(\R^d)$. Then, $\eta_\kappa * f \to f$ uniformly on compact subsets of $\R^d$ as $\kappa \to 0$.
		\end{itemize}
		From now on, for simplicity, we denote by $\rho$ also the Lebesgue density of $\rho$, so that $\rho \in L^2(\Omega)$. We extend $\rho$ to $L^2(\R^d)$ by assigning the value $0$ outside of $\Omega$.
		
		\medskip
		\noindent
		\emph{Step 1. Mass bound and dual convergence.} Let $\nu_\kappa \in \measp(\Omega)$ be a minimizer of $J_{\rho, \kappa}$. Lemma \ref{lemma:massBound} combined with the absolute continuity of $\rho$ with respect to $\Lebesgue^d$ provides $\|\nu_k\| \leq \Lebesgue^d(B(0,\pi/2)) \|\rho\|_{L^2(\Omega)}^2 \cdot \kappa^{d/2}$, which shows a decay rate which is slower than the stated rate. Hence, the estimate of Lemma \ref{lemma:massBound} has to be refined. By minimality of $\nu_\kappa$ we have $J_{\rho, \kappa}(\nu_\kappa) \leq J_{\rho, \kappa}(0) = 1$ and so, by the Cauchy--Schwartz inequality in $L^2(\Omega)$, we obtain
		\begin{align*}
			1 &= J_{\rho, \kappa}(0) \geq J_{\rho, \kappa}(\nu_\kappa) = 1 + \|\nu_k\| - 2 \langle \sqrt{C_d\kappa^d (\eta_\kappa * \nu_\kappa)}, \rho \rangle_{L^2(\Omega)} \\
			& \geq 1 + \|\nu_k\| - 2 \left\|\sqrt{C_d\kappa^d (\eta_\kappa * \nu_\kappa)}\right\|_{L^2(\Omega)} \|\rho \|_{L^2(\Omega)}
			\geq 1 + \|\nu_k\| - 2 \sqrt{C_d\kappa^d} \sqrt{\|\nu_\kappa\|} \|\rho \|_{L^2(\Omega)},
		\end{align*}
		where we used $\|\eta_\kappa * \nu_\kappa\|_{L^1(\Omega)}=\|\nu_\kappa\|$, so that $\|\nu_k\| \leq 4C_d\|\rho \|_{L^2(\Omega)}^2 \cdot \kappa^d$, which establishes \eqref{eq:massBoundL2}. In particular, thanks to \eqref{eq:pdEquation}, we also have
		\[
		(1-\psi_\kappa(x))^2 = \int_\Omega \Cos^2(|x-y|/\kappa)\,\diff\nu_\kappa(y) \leq \|\nu_\kappa\|
		\]
		and by recalling that $\psi_\kappa \leq 1$, see \eqref{eq:dualr}, and with \eqref{eq:massBoundL2} this establishes $\|1-\psi_\kappa\|_\infty=O(\kappa^{d/2})$ as $\kappa \to 0$.
		
		\medskip
		\noindent
		\emph{Step 2. Energy bound.} Consider $\hat\nu_\kappa = C_d\kappa^d\rho^2\Lebesgue^d \in \measp(\Omega)$. One finds
		\begin{equation}\label{eq:energyCandidate}
			J_{\rho, \kappa}(\hat\nu_\kappa) = 1 + C_d\kappa^d \left( \|\rho\|_{L^2(\Omega)}^2 - 2 \langle \sqrt{\eta_\kappa * \rho^2}, \rho \rangle_{L^2(\Omega)} \right).
		\end{equation}
		By strong convergence of the mollified functions, we have $\eta_\kappa * \rho^2 \to \rho^2$ in $L^1(\R^d)$ as $\kappa \to 0$.
		Using that $(a-b)^2 \leq |a^2-b^2|$ for any $a,b \geq 0$, we eventually obtain that
		\[
		\sqrt{\eta_\kappa * \rho^2} \to \rho \text{ in } L^2(\R^d) \text{ as } \kappa \to 0, \quad \text{which implies}\quad \langle \sqrt{\eta_\kappa * \rho^2}, \rho \rangle_{L^2(\Omega)} = \|\rho\|_{L^2(\Omega)}^2 + o(1).
		\]
		Thus, substituting this expansion into \eqref{eq:energyCandidate}, one gets
		\begin{equation}\label{eq:energyBound}
			\min_{\nu \in \measp(\Omega)} J_{\rho, \kappa}(\nu) \leq J_{\rho, \kappa}(\hat\nu_\kappa) = 1 - C_d\kappa^d \|\rho\|_{L^2(\Omega)}^2 + o(\kappa^d).
		\end{equation}
		
		\medskip
		\noindent
		\emph{Step 3. Convergence of the rescaled minimizers.}
		Let $\nu_\kappa \in \measp(\Omega)$ be a minimizer of $J_{\rho, \kappa}$. Using \eqref{eq:energyBound} we estimate
		\begin{align*}
			0 &= \frac{J_{\rho, \kappa}(\nu_\kappa) - \min_\nu J_{\rho, \kappa}(\nu)}{C_d\kappa^d}
			\geq \frac{\|\nu_\kappa\| - 2\langle \sqrt{C_d\kappa^d (\eta_\kappa * \nu_\kappa)}, \rho \rangle_{L^2(\Omega)} + C_d\kappa^d \|\rho\|_{L^2(\Omega)}^2 + o(\kappa^d)}{C_d\kappa^d} \\
			&= \frac{ \left\| \sqrt{\eta_\kappa * \nu_\kappa} \right\|_{L^2(\R^d)}^2 }{C_d\kappa^d} - 2\left\langle \sqrt{\frac{\eta_\kappa * \nu_\kappa}{C_d\kappa^d}}, \rho \right\rangle_{L^2(\R^d)} + \|\rho\|_{L^2(\R^d)}^2 + o(1) \\
			&= \left\| \sqrt{\frac{\eta_\kappa * \nu_\kappa}{C_d\kappa^d}} - \rho \right\|_{L^2(\R^d)}^2 + o(1),
		\end{align*}
		where we used again $\|\nu_\kappa\|=\|\eta_\kappa * \nu_\kappa\|_{L^1(\Omega)}$ from the first to the second line.
		Therefore, $\sqrt{(\eta_\kappa * \nu_\kappa)/(C_d\kappa^d)}\allowbreak \to \rho$ in $L^2(\R^d)$ as $\kappa \to 0$. In particular,
		\[
		\frac{\eta_\kappa * \nu_k}{C_d\kappa^d} \to \rho^2 \text{ in } L^1(\R^d) \text{ as } \kappa \to 0.
		\]
		Fix any $\phi \in \c(\Omega)$ and consider a continuous bounded extension $\tilde\phi \in \c(\R^d)$ such that $\tilde\phi \restr \Omega = \phi$. We compute
		\begin{align}
			\int_\Omega \phi \, \diff \left( \frac{\nu_k}{C_d\kappa^d} \right) &= \int_{\R^d} \tilde\phi \, \diff \left( \frac{\eta_\kappa * \nu_k}{C_d\kappa^d}\Lebesgue^d \right) + \int_{\R^d} \tilde\phi \, \diff \left( \frac{\nu_k}{C_d\kappa^d} - \frac{\eta_\kappa * \nu_k}{C_d\kappa^d}\Lebesgue^d \right) \nonumber\\
			&= \int_{\R^d} \tilde\phi(x) \cdot \left( \frac{(\eta_\kappa * \nu_k)(x)}{C_d\kappa^d} \right)\,\diff x + \int_{\R^d} (\tilde\phi(x) - (\eta_\kappa * \tilde\phi)(x)) \, \diff \left(\frac{\nu_k(x)}{C_d\kappa^d} \right)
			\label{eq:expansion} \\
			&\to \int_{\R^d} \tilde\phi(x) \cdot \rho^2(x) \,\diff x = \int_\Omega \phi(x) \cdot \rho^2(x) \,\diff x \quad \text{ as } \kappa \to 0,\nonumber
		\end{align}
		where the second term in \eqref{eq:expansion} converges to $0$ because $\tilde\phi - (\eta_\kappa * \tilde\phi)$ converges uniformly to $0$ on $\Omega$ and $\|\nu_k/(C_d\kappa^d)\| \leq 4\|\rho \|_{L^2(\Omega)}^2$ by means of \eqref{eq:massBoundL2}. Hence, $\nu_\kappa/(C_d\kappa^d) \weakstar \rho^2\Lebesgue^d$ in $\measp(\Omega)$ as $\kappa \to 0$.
	\end{proof}

	If $\rho$ is substituted by a weak* approximation $\hat{\rho}$, Corollary \ref{cor:minimizersconvergence} implies that minimizers $\hat{\nu}$ for $\hat{\rho}$ converge (up to subsequences) to a minimizer $\nu$ for $\rho$ as $\hat{\rho} \rightweaks \rho$.
	When $\rho$ is atomless, for small $\kappa$, by Proposition \ref{prop:HellBarycenter} and Lemma \ref{lemma:massBound} we conclude that $\nu$ (and thus also $\hat{\nu}$) is close to the zero measure.
	If we are now interested in the `residuals' of $\nu$ and $\hat{\nu}$ (i.e., if we re-scale them such that their mass is on the order 1), then Proposition \ref{prop:instability} tells us that we can only expect the residual of $\hat{\nu}$ to be close to that of $\nu$ when $\hat{\rho}$ approximates $\rho$ well in an $L^2$-sense.
	Therefore, if we were interested in using the $\HK$-barycenter to obtain a quantization or clustering of some measure $\rho$ at a small $\kappa$-scale, but only an approximation $\hat{\rho}$ is available, then the approximate solution $\hat{\nu}$ will only be useful, if $\hat{\rho}$ is a good approximation in an $L^2$-sense.
	(Intuitively, we expect that it is sufficient if the $L^2$-approximation holds after an optional convolution with a mollifier at a scale less than $\kappa$.)
	
	\section{Numerical examples}
	\label{sec:Numerics}
	To obtain a better understanding of the behaviour of the HK barycenter between Dirac measures and to illustrate the theoretical results of the previous section we now consider some numerical approximations.
	
	\subsection{Lagrangian optimization scheme}
	\label{sec:NumericsScheme}
	Let us first consider the discrete barycenter problem between $r \in \N$ unit Dirac measures on $\Omega$ with $\mu_i = \delta_{x_i}$, $x_i \in \Omega$, and weights $\lambda_i>0$ for $i=1,\ldots,r$  such that $\sum_{i=1}^r \lambda_i=1$. This corresponds to $\Lambda\assign \sum_{i=1}^r \lambda_i\,\delta_{\mu_i}$ in \eqref{eq:primal} or equivalently $\rho \assign \sum_{i=1}^r \lambda_i\,\delta_{x_i}$ in \eqref{eq:primalr}.
	
	For optimization over $\nu$ we employ a Lagrangian discretization, i.e.~we optimize over the ansatz $\nu^s = \sum_{j=1}^{s} m_j \delta_{y_j}$ with locations $y_j \in \Omega$ and masses $m_j\geq 0$ for $j=1,\ldots,s$ for some $s \in \N$. The number of points in the ansatz~$s$ may change during optimization due to merging or addition of new particles.
	The resulting optimization problem can be written as
	\begin{equation}
		\label{eq:ProblemDiscrete}
		\min_{m_j \ge 0,\;y_j} J_{\rho, \kappa}(\nu^s)  = \min_{m_j \ge 0,\;y_j} 1 + \sum_{j=1}^s m_j - 2 \sum_{i = 1}^{r} \lambda_i \sqrt{\sum_{j=1}^{s} m_j \Cos^2\left(\frac{|x_i - y_j|}{\kappa}\right)}.
	\end{equation}
	
	For gradient-based minimization we determine the partial derivatives with respect to mass coefficients~$m_j$ and locations~$y_j$. The components of the gradient in mass are
	\begin{equation*}
		\frac{\partial J_{\rho, \kappa}(\nu^s)}{\partial m_j} = 1 - \sum_{i = 1}^{r} \lambda_i
		\frac{\Cos^2\left(\frac{|x_i - y_j|}{\kappa}\right)}{\sqrt{\displaystyle \sum_{l=1}^{s} m_l \Cos^2\left(\frac{|x_i - y_l|}{\kappa}\right) }}.
	\end{equation*}
	If we set $\psi^s(x_i) \assign 1- \sqrt{\sum_{j=1}^s m_j \Cos^2\left(\frac{|x_i-y_j|}{\kappa}\right)  }$ (which would be the optimal dual $\psi$ if $\nu^s$ is primal optimal, see Proposition \ref{prop:DualityR}), then we obtain
	\begin{equation*}
		\frac{\partial J_{\rho, \kappa}(\nu^s)}{\partial m_j} = 1 - F_{\rho,\kappa}(\psi^s)(y_j),
	\end{equation*}
	i.e.~masses need to be increased when the corresponding constraint function at $y_j$ is less than 1 (the constraint is inactive) or decreased when it is violated. A vanishing gradient corresponds to the optimality condition $F_{\rho,\kappa}(\psi)(y_j)=1$ on the support of $\nu$.
	
	For the gradient in coordinates $y_j$ one obtains
	\begin{equation*}
		\frac{\partial J_{\rho, \kappa}(\nu^s)}{\partial y_j} = 
		\sum_{i=1}^{r} \lambda_i
		\frac{2 m_j \, \Cos\left(\frac{|x_i-y_j|}{\kappa}\right)
			\Sin\left(\frac{|x_i-y_j|}{\kappa}\right) \frac{y_j-x_i}{\kappa |x_i-y_j|}}{ \sqrt{\sum_{l=1}^{s} m_l \displaystyle \Cos^2\left(\frac{|x_i - y_l|}{\kappa}\right)}},
	\end{equation*}
	where $\Sin(x)\assign \sin(x)$ for $x\in [0,\pi/2]$ and $0$ otherwise.
	By comparison we find again a relation to the constraint function (with the same $\psi$ as above),
	\begin{equation*}
		\frac{\partial J_{\rho, \kappa}(\nu^s)}{\partial y_j} =  - \frac{\partial F_{\rho,\kappa}(\psi^s)(y_j)}{\partial y_j},
	\end{equation*}
	i.e.~the points $y_j$ will move `upwards' on the constraint function $F_{\rho,\kappa}(\psi)$ and only be locally optimal when sitting at a maximum, which then, by the mass optimality condition has to be at value 1.
	
	Due to the Lagrangian ansatz, the resulting optimization problem is non-convex and may get stuck in non-optimal points. On the other hand, because the points are allowed to move the spatial accuracy is not limited to a grid. The issue of poor local minima can be remedied by testing whether the value of $F_{\rho,\kappa}(\psi)$ exceeds one at points where no $y_j$ is located. This testing can be performed numerically with reasonable accuracy since $F_{\rho,\kappa}(\psi)$ is $1/\kappa$-Lipschitz continuous. Thus it is possible to combine the strengths of Lagrangian and Eulerian schemes:
	We remove points from the ansatz when their mass drops to zero, and we may add points when the dual constraint is violated, thus adaptively determining the appropriate number of point masses.
	
	A Eulerian discretization with entropic smoothing was used in \cite{friesecke2019barycenters} for numerical examples. In Proposition \ref{prop:DiscreteBar} and \cite[Proposition 6.2]{friesecke2019barycenters} it was shown that discrete minimizers exist when $\rho$ is discrete. The entropic Eulerian ansatz cannot approximate them with high accuracy due to the fixed grid, entropic blur, and since we observe that minima are often `shallow' or non-unique. Therefore, to study these discrete minimizers the non-entropic Lagrangian method is more appropriate.
	
	The problem formulated above is then solved with (preconditioned) gradient descent with gradient steps performed simultaneously in coordinates and in masses; inexact line search as described in~\cite{HagerZhang2005}.
	
	Naturally, we are also interested in examples where $\rho$ is not discrete, representing an uncountable infinite number of input measures. While Corollary \ref{cor:minimizersconvergence} suggests that this case can be approximated with discrete $\rho$, we deduce from Proposition \ref{prop:instability} that for small $\kappa$ it will be difficult to obtain good approximations for the `residual' of $\nu_\kappa$ (which will be close to the zero measure).
	Therefore, for this regime we employ a slightly different numerical scheme where $\nu$ is also approximated in a (discrete) Lagrangian fashion but the integral over $\rho$ in \eqref{eq:primalrDirect} is approximated more accurately by adaptive Gauss--Kronrod quadrature instead of individual Dirac sums as in \eqref{eq:ProblemDiscrete}. The corresponding formulas for the gradients are derived in analogy.
	For optimization of this functional we applied the quasi-Newton BFGS algorithm.

	\subsection{Finite number of input measures}
	For a discrete number of input Dirac measures, for $\kappa$ sufficiently close to zero, it is easy to see that the resulting HK barycenter will be a superposition of Dirac measuers, one per input measure (cf.~Prop.~\ref{prop:HellBarycenter}). For $\kappa$ sufficiently large, it will be one single Dirac measure.
	For $\kappa$ increasing from 0 to $\infty$, a gradual merging of Diracs in the barycenter was observed in \cite{friesecke2019barycenters} on various example. Here we demonstrate numerically that the general behaviour is more complex. As $\kappa$ increases, Diracs may merge and split, disappear and reappear, and the total number of Diracs may even temporarily increase.
	
	\begin{figure}[hbt]
		\centering
		\includegraphics[width=0.58\textwidth]{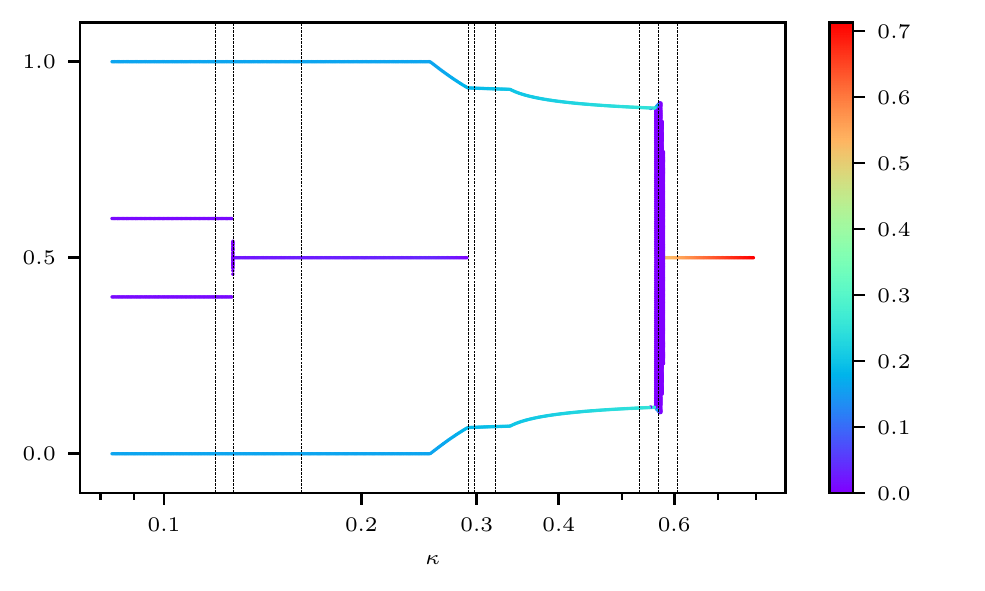}
		\includegraphics[width=0.36\textwidth]{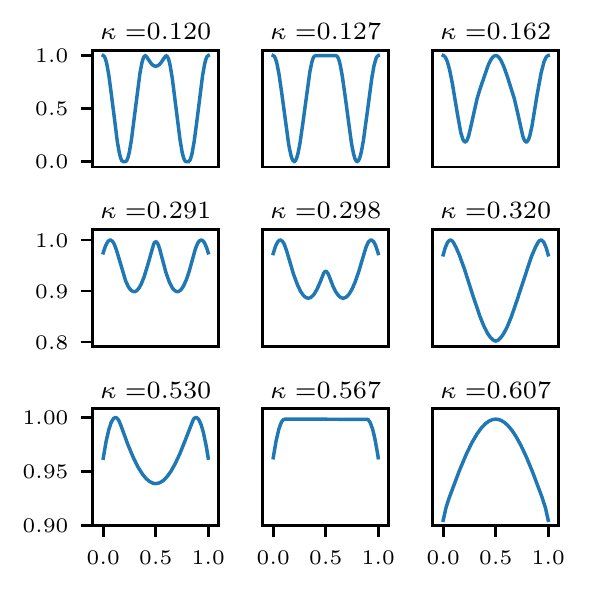}
		\caption{Left: the HK barycenter on $\Omega=[0,1]$ for $\rho$ and $\kappa$ as in \eqref{eq:RhoExample1}. For each $\kappa$, position of points indicates positions of Dirac measures, color code indicates the amount of mass. The vertical lines show locations for which the constraint function $F_{\rho,\kappa}(\psi)$ is shown on the right.}
		\label{fig:4masses}
	\end{figure}
	\FloatBarrier
	
	Figure \ref{fig:4masses} illustrates the HK barycenter for
	\begin{align}
		\label{eq:RhoExample1}
		\rho & \assign 0.4 \cdot \delta_0 + 0.1 \cdot \delta_{0.4} + 0.1 \cdot \delta_{0.6} + 0.4 \cdot \delta_1 \tn{ on } \Omega=[0,1], &
		\tn{for } \kappa & \in [0.08,0.8],
	\end{align}
	and the constraint function $F_{\rho,\kappa}(\psi)$ for the corresponding dual optimal $\psi$.
	For small $\kappa$, as expected, the barycenter consists of four individual Dirac masses at the same locations as in $\rho$. Eventually the two middle masses merge (where the constraint function briefly exhibits an extended plateau of value 1, as analyzed in \cite{friesecke2019barycenters}). At some point, the outer masses `see' the inner masses (i.e.~their relative distance drops below $\kappa \pi/2$). Since their $\lambda$-weights are much higher, the joint Dirac in the barycenter remains much closer to the outer masses until the Dirac at the center even vanishes. Note that after this vanishing, the constraint function briefly even exhibits a local maximum at $0.5$ which is strictly below 1. Eventually, all masses merge into one cluster. During the merging the constraint function exhibits an extended plateau of value 1 for an extended interval of $\kappa$ values and in this regime a non-discrete barycenter exists (again, this was already analyzed in \cite{friesecke2019barycenters}).
	
	\begin{figure}[hbt]
		
		\begin{subfigure}{0.61\textwidth}	
			\centering	
			\includegraphics[width=\textwidth]{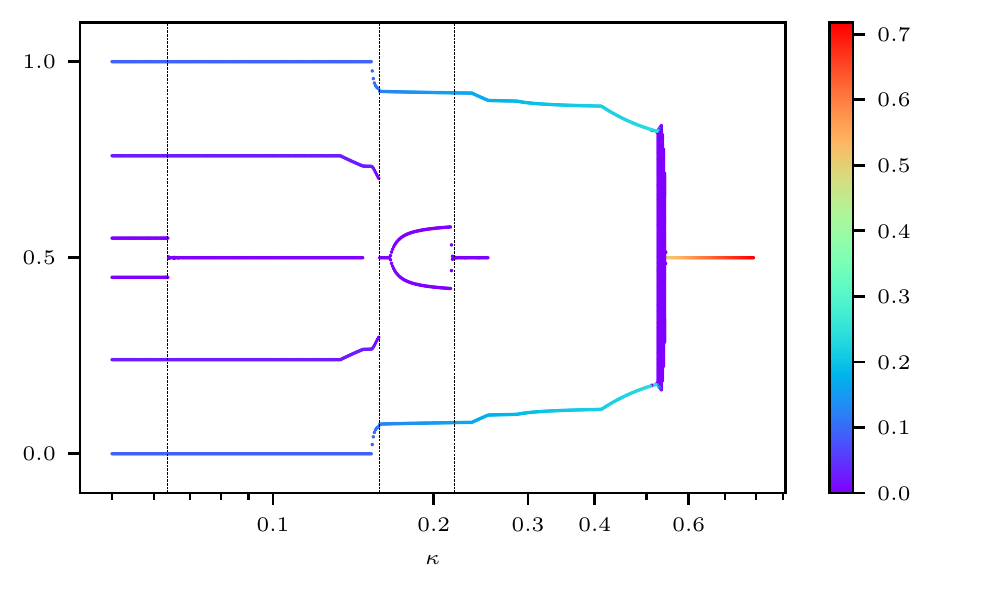}	
		\end{subfigure}	
		\begin{subfigure}{0.36\textwidth}	
			\centering	
			\includegraphics[width=\textwidth]{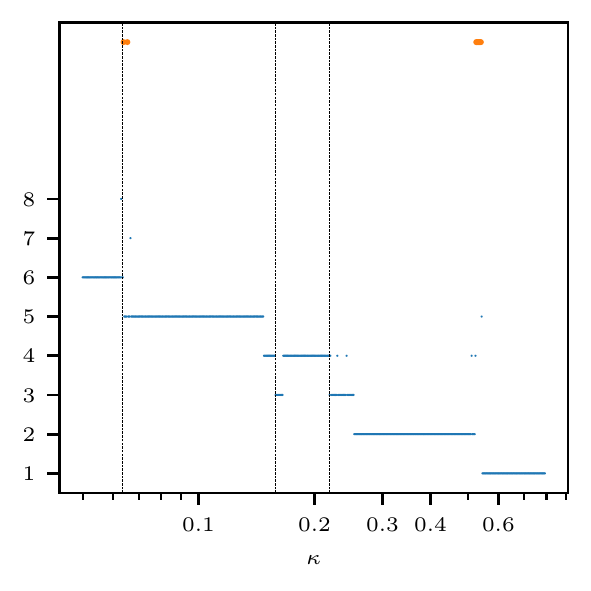}
		\end{subfigure}
		\caption{Left: the HK barycenter on $\Omega=[0,1]$ for a $\rho$ with six Dirac masses (see text), visualized as in Figure \ref{fig:4masses}. Right: the number of masses in the barycenter $\nu$, which is not decreasing over $\kappa$. The regimes with a seemingly diffuse solution are marked by orange points.}
		\label{fig:6masses}
	\end{figure}
	\FloatBarrier
	
	Figure \ref{fig:6masses} shows a similar example with 6 masses in $\rho$, given by
	$$\rho \assign 0.3 \cdot (\delta_0+\delta_1) + 0.16 \cdot (\delta_{0.24}+\delta_{0.76}) + 0.03 \cdot (\delta_{0.45}+\delta_{0.55}).$$
	The trajectory of HK barycenters over $\kappa$ exhibits an even more intricate behaviour with the mass at the center appearing and disappearing several times and even the number of masses temporarily increasing as $\kappa$ increases. For at least two regions of scales, a diffuse barycenter seems to be admissible.
	
	\subsection{A continuum of input measures}
	\label{sec:NumericsContinuous}
	
	Now we consider a continuum of input measures. Let $\Omega = [0,1]$ and $\rho = \Lebesgue^1\restr\Omega$. Following Section \ref{sec:NumericsScheme} we consider the functional
	\begin{equation}
		J_{\rho, \kappa}^s(y_1, \dots, y_s, m_1, \dots, m_s) = J_{\rho, \kappa}(\nu^s) = 1 + \sum_{j=1}^s m_j -2 \int_{\Omega} \sqrt{\sum_{j=1}^s m_j \Cos^2(|x-y_j|/\kappa)}\,\diff x,
	\end{equation}
	which corresponds to \eqref{eq:ProblemDiscrete} with continuous $\rho$. The integral is approximated by adaptive Gauss--Kronrod quadrature and minimized via projected BFGS in positions and masses.
	
	Figure \ref{fig:uniform} shows barycenters obtained for $\kappa \in [1/12, 1]$ and the evolution of the total mass. In agreement with Proposition \ref{prop:instability} the latter decreases linearly to $0$ as $\kappa \to 0$.
	The evolution seems to consist of intervals in which the number of Dirac masses gradually decreases by one step at a time until only a single mass is left. However, these intervals are separated by transition regions, during which the behaviour is more complicated and the number of masses also temporarily increases again.
	
	\begin{figure}[tbh]
		\begin{subfigure}{0.49\textwidth}
			\centering
			\includegraphics[width=\textwidth]{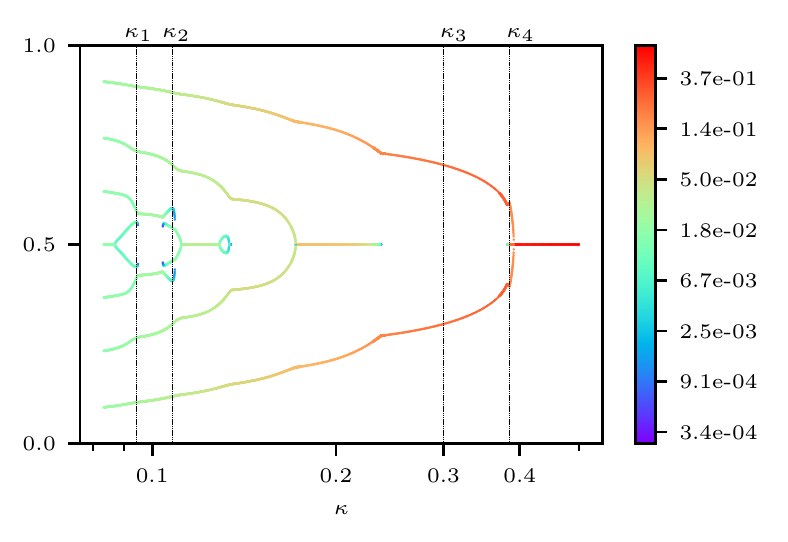}
		\end{subfigure}
		\begin{subfigure}{0.49\textwidth}
			\centering
			\includegraphics[width=\textwidth]{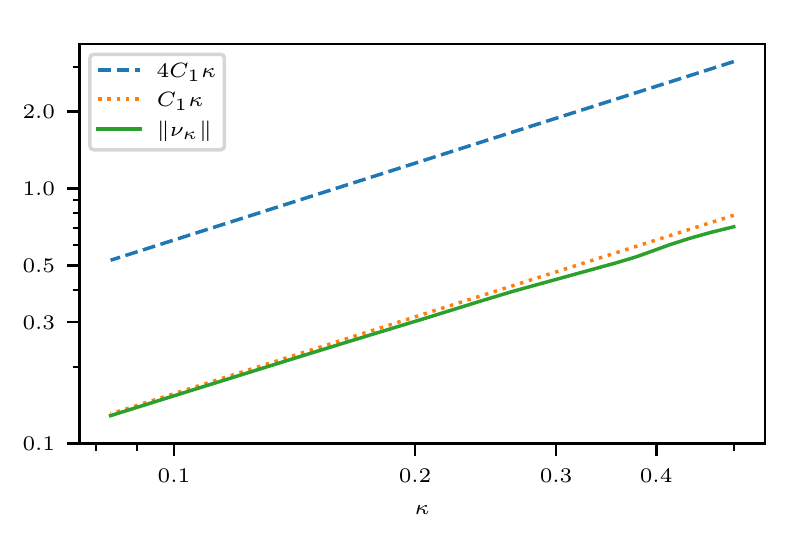}
		\end{subfigure}
		\\
		\begin{subfigure}{\textwidth}
			\centering
			\includegraphics[width=\textwidth]{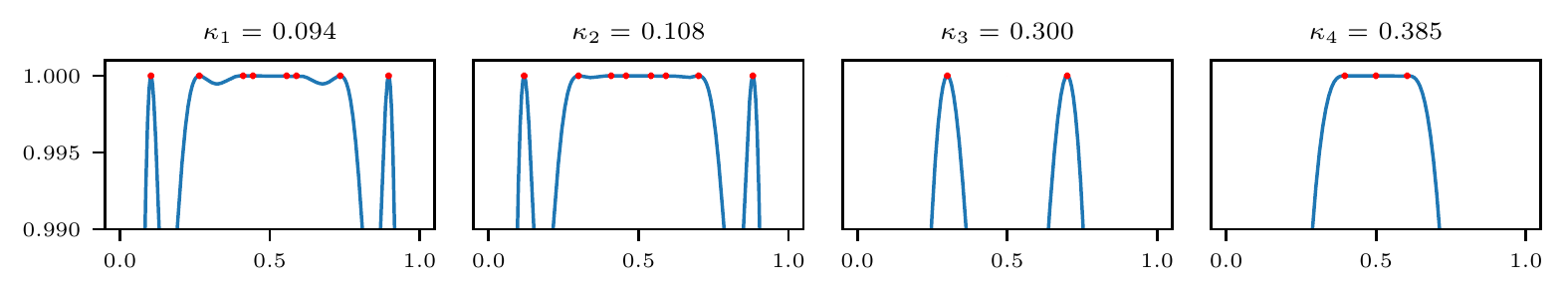}
		\end{subfigure}
		\caption{Top-Left: the HK barycenter on $\Omega=[0,1]$ for a $\rho=\Lebesgue \restr [0,1]$ visualized as in Figure \ref{fig:4masses} (here with a logarithmic colour map). Top-Right: total mass of the HK barycenter, in comparison with bound and asymptotic expansion from Proposition \ref{prop:instability}. Bottom: the constraint function $F_{\rho,\kappa}(\psi)$ for some values of $\kappa$ (as marked in the top-left), with positions $(y_j)_j$ of the primal masses marked by red points (note the range of the vertical axis, which only shows a very small interval close to one).}
		\label{fig:uniform}
	\end{figure}
	\FloatBarrier

	Due to the uniformity of $\rho$ this problem proved to be quite challenging numerically, as the constraint function for the optimal $\psi$ was very close to one, almost throughout the entire bulk of the interval, see Figure \ref{fig:uniform}. In particular the transition regions required detailed manual inspection.
	It is possible to solve the problem analytically for very small and very large $\kappa$, but the full spectrum seems to be beyond reach. Therefore, it seems ultimately impossible to prove that the true minimizers have the same structure as our numerical approximations. But via the primal-dual optimality conditions we can at least guarantee that the numerical approximations must be very close in terms of objective value. In particular, the observed complicated transitions seem to outperform simpler variants without additional particles. These transitions are shown in more detail in Figure \ref{fig:uniform_zoom}.

	\begin{figure}[tbh]
		\begin{subfigure}{0.32\textwidth}
			\centering
			\includegraphics[width=\textwidth]{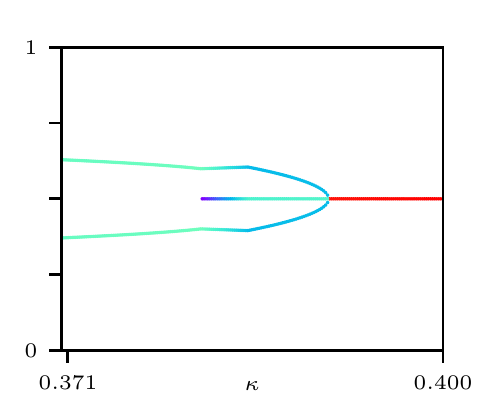}
		\end{subfigure}
		\begin{subfigure}{0.32\textwidth}
			\centering
			\includegraphics[width=\textwidth]{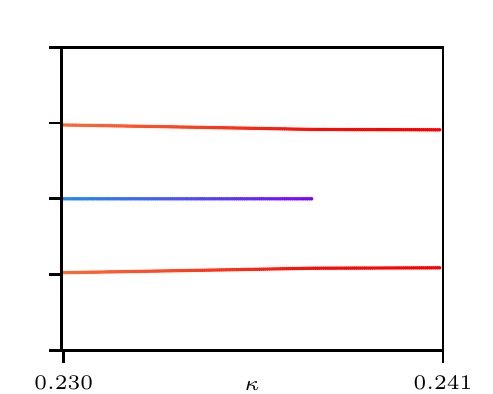}
		\end{subfigure}
		\begin{subfigure}{0.32\textwidth}
			\centering
			\includegraphics[width=\textwidth]{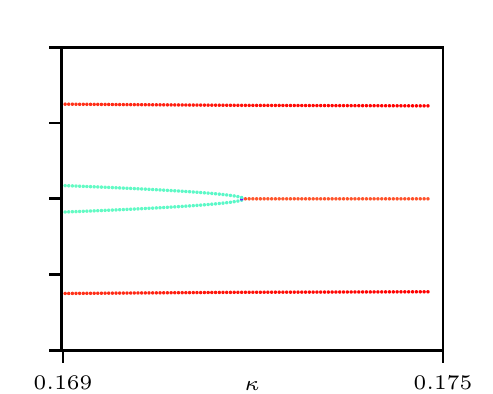}
		\end{subfigure}
		\\
		\begin{subfigure}{0.32\textwidth}
			\centering
			\includegraphics[width=\textwidth]{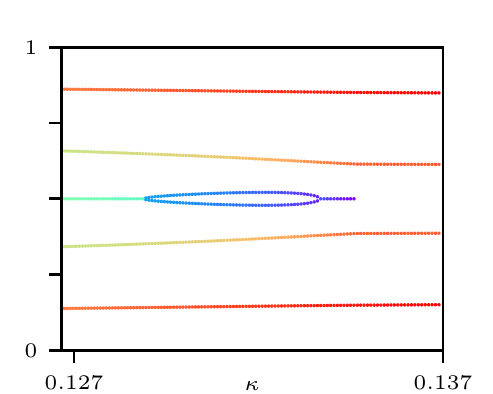}
		\end{subfigure}
		\begin{subfigure}{0.32\textwidth}
			\centering
			\includegraphics[width=\textwidth]{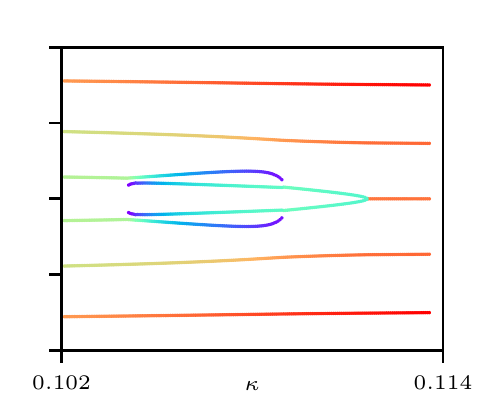}
		\end{subfigure}
		\begin{subfigure}{0.32\textwidth}
			\centering
			\includegraphics[width=\textwidth]{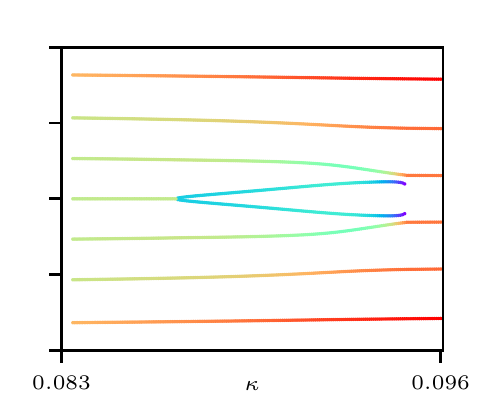}
		\end{subfigure}
		\caption{Zoom on the transition regions for Figure \ref{fig:uniform} for the given values of $\kappa$. For better visibility the color scale is adjusted to the mass range in each sub-figure.}
		\label{fig:uniform_zoom}
	\end{figure}
	\FloatBarrier

	It seems that each of the shown transitions follows a different pattern: From one to two particles, first a fork into three particles is observed, and then the middle particle vanishes (numerically it seems that in this region also a diffuse solution would be admissible, but we were unable to find a solution with less than three particles). In the transition from two to three, the third particle simply appears in the middle. From three to four, the middle particle splits into two. From four to five, a new particle first appears, then splits, and finally re-merges. From five to six, a particle first splits, but the two fragments then vanish and are replaced by appearing new particles. From six to seven, two particles appear and eventually merge. We did not anticipate such a complicated structure in a convex functional.
	
	\subsection{Comparison with empirical measures}
	
	Next, we study the convergence of the $\HK$ barycenter as $\rho$ is approximated through sampling. For the previous example with $\rho$ being the uniform measure on $[0,1]$ we now generate $\hat{\rho}$ by drawing $n$ points from $\rho$ and using the obtained empirical measure. Corresponding empirical barycenters are shown in Figure~\ref{fig:uniform_sampled}. 
	
	\begin{figure}[tbh]
		\begin{subfigure}{0.49\textwidth}
			\centering
			\includegraphics[width=\textwidth]{./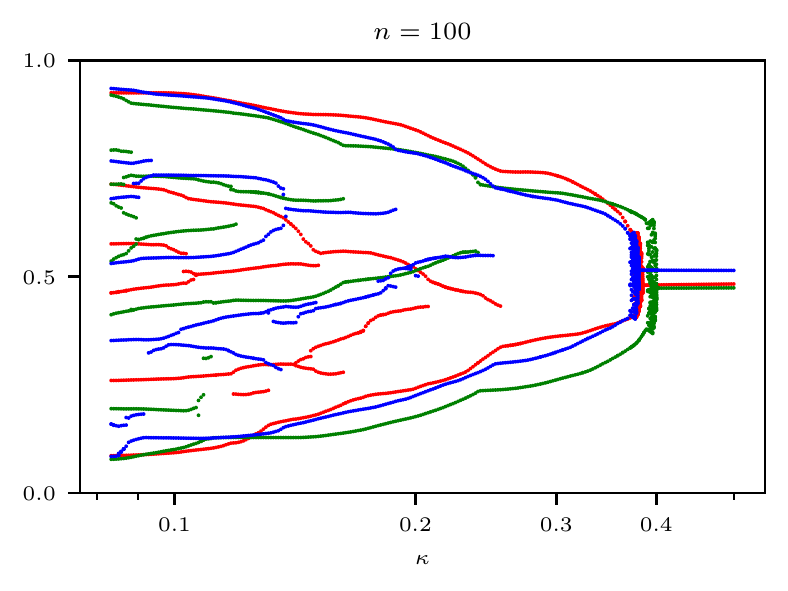}
		\end{subfigure}
		\begin{subfigure}{0.49\textwidth}
			\centering
			\includegraphics[width=\textwidth]{./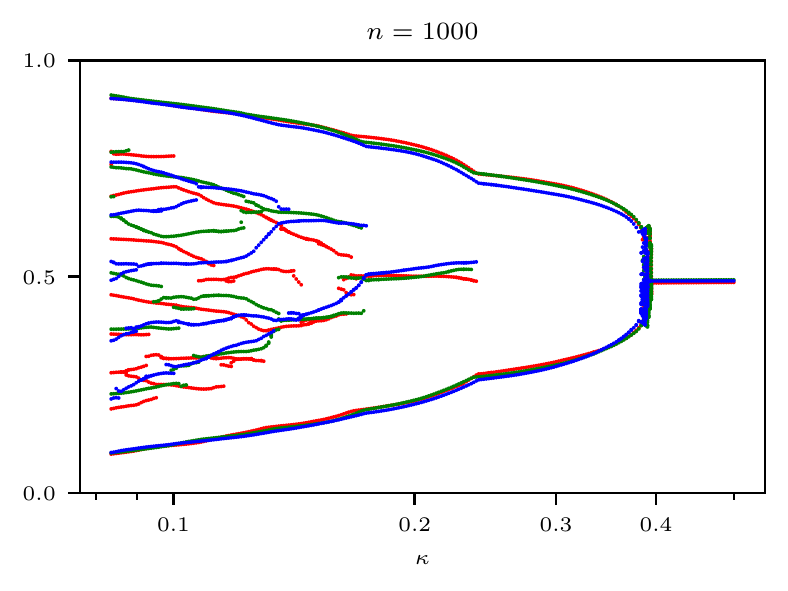}
		\end{subfigure}
		\caption{Barycenters for the input measures sampled from uniform distribution on $[0,1]$. Points mark support of masses, mass itself is not visualized. Three different instances are shown in different colors to visualize the variance between them. Left: 100 points sampled. Right: 1000 points sampled. }
		\label{fig:uniform_sampled}
	\end{figure}
	\FloatBarrier
	
	By Corollary \ref{cor:minimizersconvergence} we expect convergence of the empirical barycenter to the true one, as $n \to \infty$. However, as $\kappa \to 0$, the true barycenter will converge to the zero measure (Proposition \ref{prop:HellBarycenter}). Convergence of the `residual' part is analyzed in Proposition \ref{prop:instability} for the case when $\rho$ has an $L^2$-density. From this we expect that the residual of the empirical barycenter will be close to the real residual, when $\hat{\rho}$ is a good $L^2$-approximation of $\rho$. As $\hat{\rho}$ is an empirical measure, it has no $L^2$-density. Intuitively, we expect the result to still hold when a small convolution on a length scale below $\kappa$ is applied to $\hat{\rho}$, and when this mollified version of $\hat{\rho}$ is close to $\rho$ in an $L^2$-sense. In a nutshell, we expect the residuals to become worse, as $\kappa$ decreases, and better as $n$ increases. This is confirmed by the examples in Figure \ref{fig:uniform_sampled}.
	Convergence of the dual solution and constraint function (Prop.~\ref{prop:DualStability}) is visualized in Figure \ref{fig:psi-ffunction}.
	
	\begin{figure}[tbh]
		\begin{subfigure}{0.49\textwidth}
			\centering
			\includegraphics[width=\textwidth]{./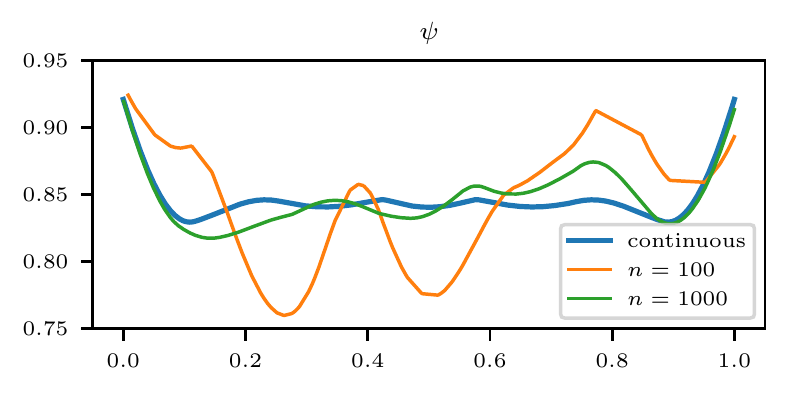}
		\end{subfigure}
		\begin{subfigure}{0.49\textwidth}
			\centering
			\includegraphics[width=\textwidth]{./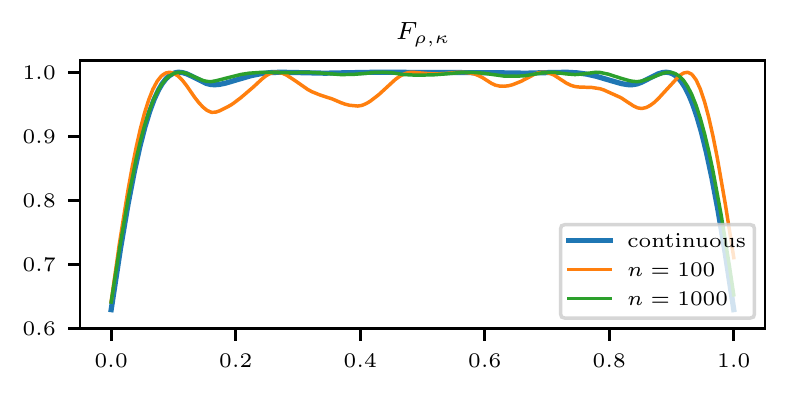}
		\end{subfigure}
		\caption{The dual (left) and the constraint function (right) at $\kappa=0.1$, for two sampled solutions with $n=100$ and $n=1000$ together with the solution obtained from the continuous ansatz of Section \ref{sec:NumericsContinuous} for comparison. }
		\label{fig:psi-ffunction}
	\end{figure}
	\FloatBarrier
	
	The uniformity of $\rho$ in the example above makes it is not quite clear what kind of `clustering' to expect for smaller $\kappa$. Therefore, we perform similar experiments on mixtures of Gaussian distributions. That is, $\rho$ is given by a mixture of 5~Gaussians with mean and standard deviations given by
	$$(0.15, 0.05),\; (0.30, 0.03),\;  (0.46, 0.08),\;  (0.71, 0.03), \; (0.81, 0.06).$$
	
	Figure \ref{fig:Gaussians} shows the corresponding numerical results for sampling $n_i=100$ and $n_i=1000$ points from each Gaussian.
	The coarse structure of the resulting HK barycenters seems to indicate five major clusters, one per Gaussian, that gradually merge. As expected, cluster masses are higher for more concentrated Gaussians, and the second cluster seems to absorb some points from the first Gaussian. As in the earlier examples, the transition between the major cluster intervals is more complicated, and differs between instances. Occasionally, smaller spurious particles with low mass are present. The rough structure of the barycenters seems consistent between all four experiments, in agreement with the proven stability results.

	\begin{figure}[hbt]
		\centering
		\includegraphics[width=\textwidth]{./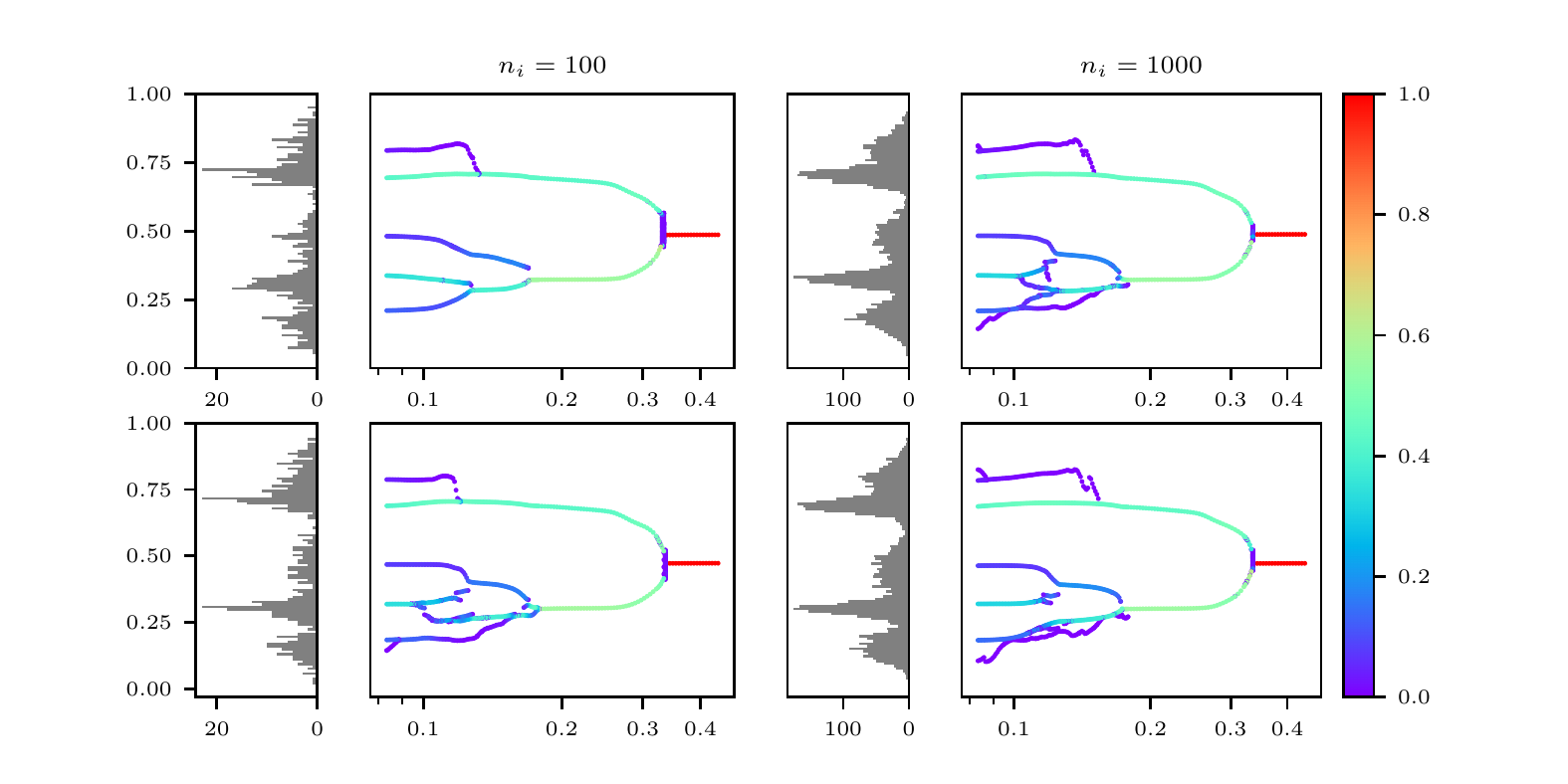}
		\caption{HK barycenters for samples from a mixture of 5 Gaussian distributions. Each column shows two instances. Left column: 100 samples per Gaussian. Right column: 1000 samples per Gaussian. Empirical distribution is visualized by the vertical histograms for each instance (50 bins in left column, 100 bins in right column). The mass of the barycenter for each $\kappa$ is re-normalized to 1 for better visibility. }
		\label{fig:Gaussians}
	\end{figure}
	\FloatBarrier
	
	Figure~\ref{fig:Gaussians_F_function} shows the regions of the constraint function $F_{\rho,\kappa}(\psi)$ close to one for the examples presented in Figure~\ref{fig:Gaussians}. The threshold is scaled with parameter~$\kappa$ as this seems to produce lines of approximately constant width over the scales. By Proposition \ref{prop:DualStability} we know that these regions are unique and convergent as $n \to \infty$. They seem to be in good correspondence with the primal pictures and are reasonably stable under repeated experiments.
	
	\begin{figure}[hbt]
		\centering
		\includegraphics[width=0.7\textwidth]{./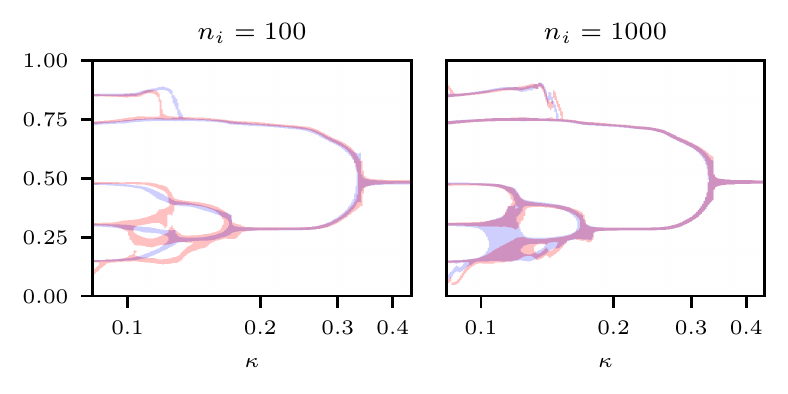}
		\caption{Thresholded regions where the dual constraints are close to being active, i.e.~where $F_{\rho,\kappa}(\psi) \geq 1-\frac{\exp(-9.5)}{\kappa}$
			for two instances with $n_i=100$ (left) and $n_i=1000$ (right) sampled points in each Gaussian presented in~Figure~\ref{fig:Gaussians}. 
			The re-scaling with $\kappa$ was done based on the empirical observation that it yielded approximately consistent widths of the lines.}
		\label{fig:Gaussians_F_function}
	\end{figure}
	\FloatBarrier
	
	For visual comparison, Figure~\ref{fig:clusters} shows four single linkage cluster dendrograms for the sampled points presented above, computed with the algorithm described in~\cite{olson1995parallel}. The obtained major clusters are qualitatively similar to the results obtained by the HK barycenter. It differs from the HK barycenter figure in some important features: First, it is well known that single linkage produces many spurious outliers that show up in the figure as dark, thin lines. The HK barycenter seems less prone to such outliers. Second, the obtained dendrograms for $n_i=100$ and $n_i=1000$ appear to be shifted horizontally against each other, since the expected distances between pairs of points are different in both cases. For the HK barycenter the behaviour is qualitatively the consistent at a fixed $\kappa$ for different $n_i$. Third, of course the dendrogram does provide a strict hierarchical clustering of the data, unlike the HK barycenters, and it can be computed with simple and efficient algorithms. These are features that the HK barycenter does not offer.
	For an approach to making the dendrograms more robust to spurious outliers, see for instance~\cite{ClusterTreeEstimation2014}.
	
	\begin{figure}[tbh]
		\centering
		\includegraphics[width=0.7\textwidth]{./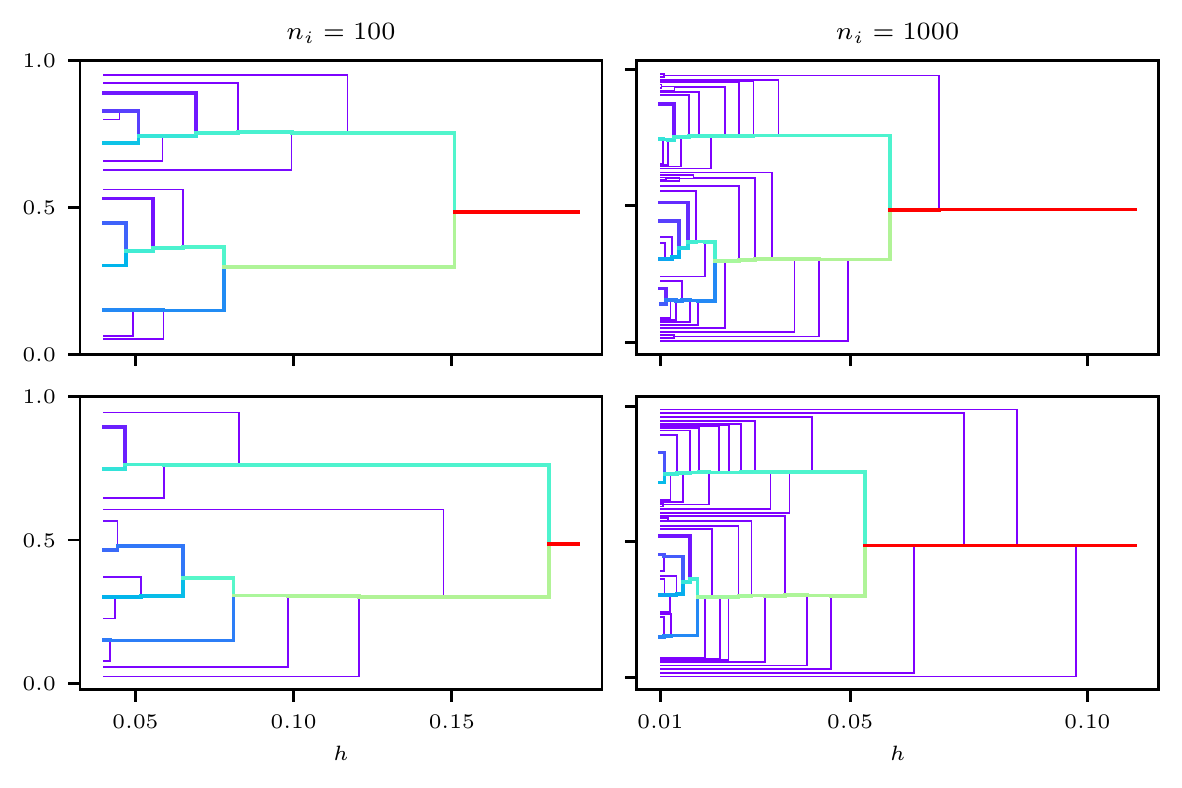}
		\caption{Single linkage cluster dendrograms for two instances with $n_i=100$ (left) and for 2 instances with $n_i=1000$ (right) sampled points in each Gaussian presented in~Figure~\ref{fig:Gaussians}, truncated for better visibility. Colorscale represents the number of points clustered in a branch. Branches carrying less then $2\%$ of the total mass are shown with narrower lines for better visibility.}
		\label{fig:clusters}
	\end{figure}
	\FloatBarrier

	\subsection{A two-dimensional example}
	
	Finally, we present some two-dimensional examples. We start with the uniform density on the square~$[0,1]^2$, discretized by $131^2$ discrete Dirac masses, see Figure~\ref{fig:2Dsquare}. As in one dimension, an intricate sequence of transitions between relatively regular intervals is observed. 
	
	\begin{figure}[tbh]
		\centering
		\includegraphics[width=\textwidth]{./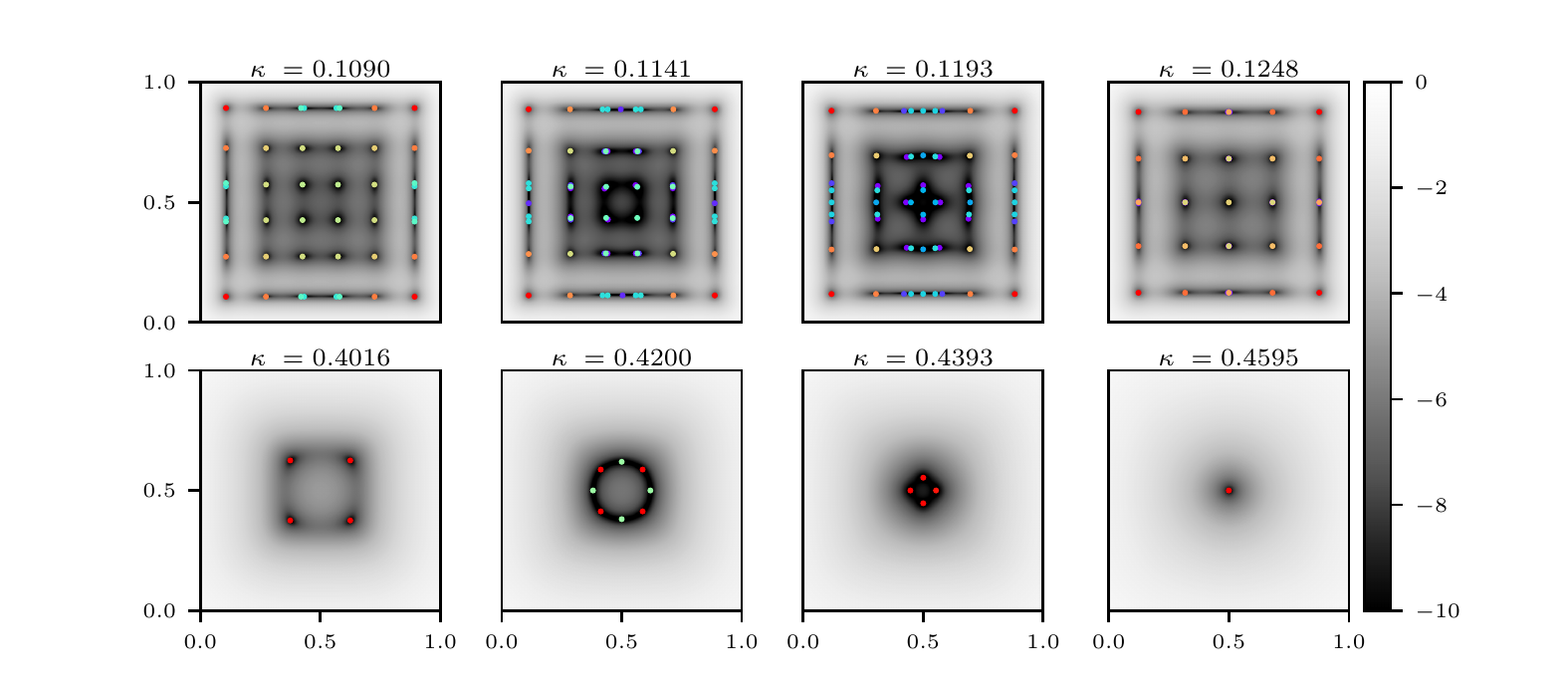}
		\caption{HK barycenter for various $\kappa$ for uniform $\rho$. In grayscale: dual feasibility residual in log-scale ($\ln(|1-F_{\rho,\kappa}(\psi)|)$). In color scale: locations and masses of barycenter with maximal mass in each barycenter re-normalized to 1 for better visibility.}
		\label{fig:2Dsquare}
	\end{figure}
	\FloatBarrier

	An example with a mixture of 3 Gaussians is shown in Figure~\ref{fig:2DGaussians}. For this experiment, 50 points were sampled from each 2D Guassian distribution. The sampled points are shown in the plot in grey, the barycenter for selected $\kappa$ is shown in color. The HK barycenter in this case presents a 'clustering' behavior similar to the one shown for a mixture of Gaussians in~1D.
	\begin{figure}[tbh]
		\centering
		\includegraphics[width=\textwidth]{./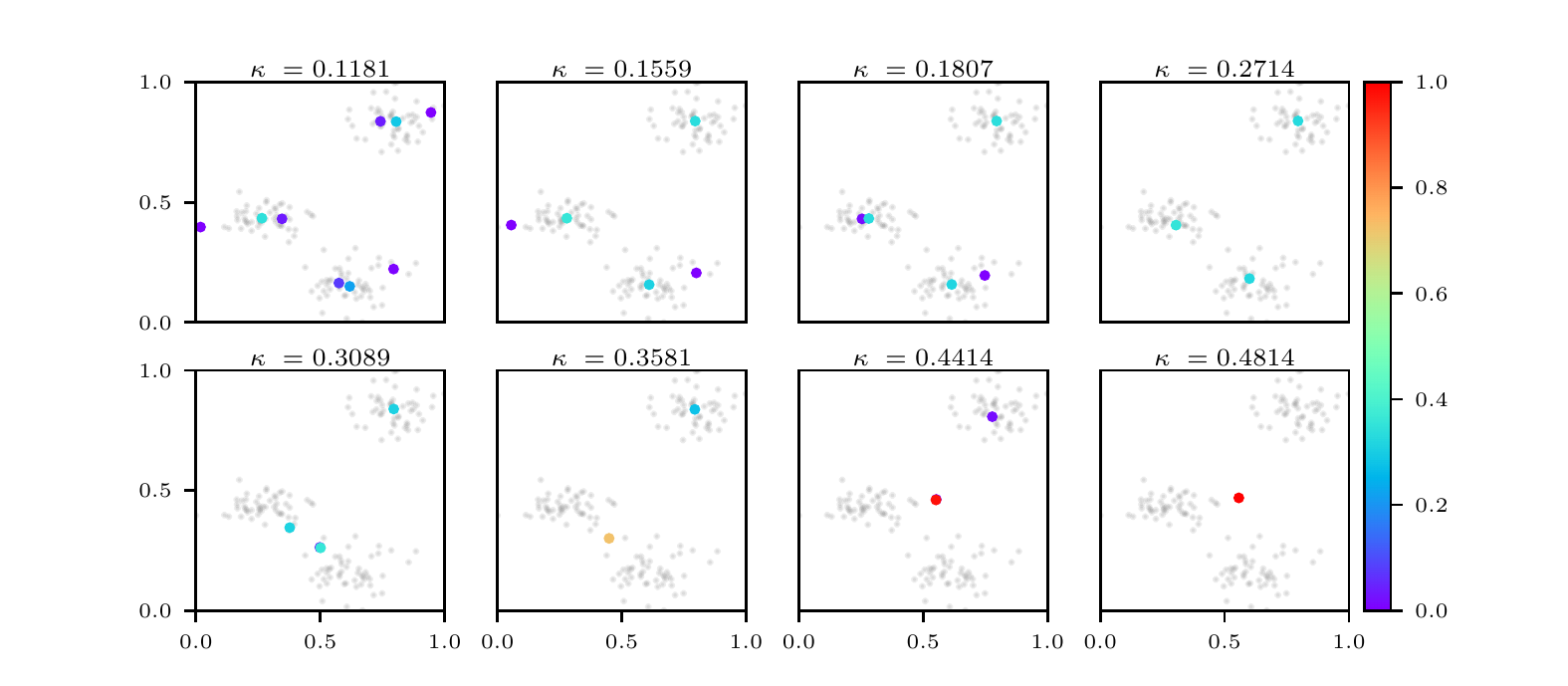}
		\caption{HK barycenter for various $\kappa$ for $\rho$ sampled from mixture of Gaussians in 2D. Sampled points are shown in grey. The mass of the barycenter for each~$\kappa$ is re-normalized to 1 for better visibility. }
		\label{fig:2DGaussians}
	\end{figure}
	\FloatBarrier

	\section{Conclusion}
	In this article we have studied in more detail the barycenter between an uncountable number of input measures with respect to the Hellinger--Kantorovich distance, with a particular focus on Dirac input measures.
	We have shown existence and stability with respect to input data and length scale parameter and derived a corresponding dual problem.
	For Dirac input measures we have shown existence of continuous dual maximizers, their uniqueness ($\rho$-a.e.) and primal-dual optimality conditions. The behaviour of the solutions as $\kappa \to 0$ was studied in more detail, including the limit solution and asymptotic mass and density estimates. We showed that in some cases no discrete minimizers can exist. A numerical scheme based on Lagrangian discretization was introduced and it was shown numerically that the evolution of the minimizer with respect to the length scale does not correspond to a simple gradual merging of `clusters'. With these two properties (non-existence of discrete minimizers, no simple merging behaviour) the HK barycenter does not induce a simple hierarchical clustering of data points in the conventional sense. Instead, a wide variety of transition behaviors is observed numerically. However, it still provides a one-parameter family of measures, interpolating between the input data and a single Dirac measure, which can be interpreted as a gradual coarse graining. It is reasonably robust under empirical approximation by sampling, as demonstrated theoretically and with numerical examples, and comes with a corresponding family of dual problems that provide additional interpretation and information.
	
	As such it might be an interesting tool for the structure analysis of point clouds and application to real data would be a possible direction for future research. This would lead to related questions such as the interpretation of the trajectory of barycenters in high dimensions and their reliable numerical approximation.
	
	\paragraph{Acknowledgement.}
	This research was supported by the DFG through the Emmy Noether Programme (project number 403056140) and the project `Nonsmooth and nonconvex optimal transport problems' within the Priority Programme SPP 1962: `Complementarity-Based Distributed Parameter Systems: Simulation and Hierarchical Optimization' (project number 423447095).
	\label{sec:Conclusion}
	
	\bibliography{references}{}
	\bibliographystyle{plain}
	
\end{document}